\setlist[enumerate,1]{label={\upshape(\arabic*)}}
\setlist[enumerate,2]{label={\upshape(\alph*)}}
\tikzset{black/.style={circle,fill=black,inner sep=3pt,outer sep=3pt},
	white/.style={circle,fill=white,draw=black,inner sep=3pt,outer sep=3pt},
}
\newcolumntype{C}{>{$}c<{$}}
\newtheorem{theorem}{Theorem}[section]
\newtheorem{theoremi}{Theorem}
\newtheorem{corollaryi}[theoremi]{Corollary}
\newtheorem{corollary}[theorem]{Corollary}
\newtheorem{lemma}[theorem]{Lemma}
\newtheorem*{lemma*}{Lemma}
\newtheorem*{theorem*}{Theorem}
\newtheorem{proposition}[theorem]{Proposition}
\newtheorem{definition-proposition}[theorem]{Definition-Proposition}
\theoremstyle{definition}
\newtheorem{definition}[theorem]{Definition}
\newtheorem{remark}[theorem]{Remark}
\newtheorem{example}[theorem]{Example}
\newcommand{\Ext}{\operatorname{Ext}\nolimits}
\newcommand{\Hom}{\operatorname{Hom}\nolimits}
\newcommand{\RHom}{\mathbf{R}\strut\kern-.2em\operatorname{Hom}\nolimits}
\newcommand{\Image}{\operatorname{Im}\nolimits}
\newcommand{\Kernel}{\operatorname{Ker}\nolimits}
\newcommand{\Cokernel}{\operatorname{Coker}\nolimits}
\newcommand{\coker}{\Cokernel}
\newcommand{\im}{\Image}
\renewcommand{\ker}{\Kernel}
\DeclareMathOperator{\moduleCategory}{\mathsf{mod}} \renewcommand{\mod}{\moduleCategory}
\DeclareMathOperator{\Mod}{\mathsf{Mod}}
\DeclareMathOperator{\ind}{\mathsf{ind}}
\DeclareMathOperator{\thick}{\mathsf{thick}}
\DeclareMathOperator{\add}{\mathsf{add}}
\DeclareMathOperator{\cone}{\mathsf{cone}}
\DeclareMathOperator{\cocone}{\mathsf{cocone}}
\def\lr#1{\langle #1\rangle}
\numberwithin{equation}{section}
\begin{document}
	\title[The Grothendieck group of an extriangulated category]{The Grothendieck group of an extriangulated category}
	
	\author[L. Wang]{Li Wang}
	\address{School of Mathematics-Physics and Finance, Anhui Polytechnic University, 241000 Wuhu, Anhui, P. R. China}
	\email{wl04221995@163.com {\rm (L. Wang)}}
	\subjclass[2020]{18F30, 18G80, 16G10}
	\keywords{Grothendieck group; extriangulated category; silting subcategory; $d$-cluster tilting subcategory; $d$-cluster category of type $A_n$}

	\begin{abstract}
		In this paper, we investigate the split Grothendieck group $K^{\rm sp}_{0}(\mathcal{M})$ of a $d$-rigid subcategory $\mathcal{M}$ in an extriangulated category $\mathscr{C}$. As applications, we prove the following  results: (1) If $\mathcal{M}$ is a silting subcategory, then the Grothendieck group $K_{0}(\mathscr{C})$ is isomorphic to   $K_{0}^{\rm sp}(\mathcal{M})$;  (2) If $\mathcal{M}$ is a $d$-cluster tilting subcategory, then  $K_{0}(\mathscr{C})$ is isomorphic to the index Grothendieck group $K_{0}^{\rm in}(\mathcal{M})$; (3) Let $\mathcal{C}_{A_{n}}^{d}$ be the $d$-cluster category of type $A_n$. If $d$ is even, then  $K_0(\mathcal{C}_{A_{n}}^{d})\cong \mathbb{Z}/(n+1)\mathbb{Z}$. If $d$ is odd, then $K_0(\mathcal{C}_{A_{n}}^{d})\cong \mathbb{Z}$ if $n$ is odd; $K_0(\mathcal{C}_{A_{n}}^{d})\cong 0$ if $n$ is even.		
	\end{abstract}
	\maketitle

	
	\section{Introduction}
	The Grothendieck group of a triangulated category is defined as the free abelian group generated by the isomorphism classes of its objects module the Euler relations formed by  triangles. 	In a similar manner, the Grothendieck group of an exact category can be defined using conflations instead of triangles. It is known that these groups are closely related to several important topics, e.g.  classifications \cite{Tho}, silting theory \cite{Ai}, cluster theory \cite{FE} and so on.
	
	Recently, Nakaoka and Palu \cite{Na} introduced the notion of extriangulated categories by extracting properties on triangulated categories and exact categories. It turns out that many classical concepts and theories concerning exact and triangulated categories have been unified and extended in this setting, e.g. \cite{Go,He,Wl}. The notation of Grothendieck groups of  extriangulated categories was first introduced in \cite{Zhu} to study the Auslander-Reiten theory and the  classification of dense resolving subcategories. 	It seems to be an interesting but difficult problem to compute the Grothendieck  group $K_{0}(\mathscr{C})$  of a given extriangulated category $\mathscr{C}$. This might be done by connecting it to a split Grothendieck group  $K^{\rm sp}_{0}(\mathcal{M})$ of a specific subcategory $\mathcal{M}$. We present some related results:
	
	(1) The first one is based on the silting subcategories in silting theory. Let $\mathscr{C}$ be a Krull-Schmidt triangulated category with a silting subcategory $\mathcal{M}$. It is well-known that the inclusion $\mathcal{M}\hookrightarrow\mathscr{C}$ induces an isomorphism between  $K_{0}(\mathscr{C})$ and  $K^{\rm sp}_{0}(\mathcal{M})$.  This result was proved  by Aihara and Iyama in \cite[Theorem 2.27]{Ai} and generalized by  Adachi and Tuskamot in \cite[Theorem 4.1]{Ad2} for  Krull-Schmidt extriangulated categories. In \cite[Theorem 3.3]{Ch}, the authors showed that this result also applies to any triangulated categories. 
	
	$(2)$ The second one arises from the  cluster tilting subcategories in cluster theory. Let $\mathscr{C}$ be a triangulated category of finite type.  Xiao and Zhu \cite{Xi} proved that  $K_{0}(\mathscr{C})$ is  isomorphic to  $K_{0}^{\rm sp}(\mathscr{C})$ module  the Euler relations formed by Auslander-Reiten triangles. We may regard $\mathscr{C}$ as the only possible $1$-cluster tilting subcategory of  $\mathscr{C}$. For a  2-Calabi–Yau triangulated category $\mathscr{C}$ with a $2$-cluster titling subcategory $\mathcal{M}$,  Palu   \cite{Pa2}  proved that $K_{0}(\mathcal{T})$ is  isomorphic to $K_{0}^{\rm sp}(\mathcal{M})$ module  the relations formed by exchange triangles. Later, Fedele \cite{FE} give a higher cluster titling version  for  triangulated categories with a Serre functor. Let $\mathscr{C}$ be a Krull-Schmidt extriangulated category with an $n$-cluster titling subcategory $\mathcal{M}$. Wang, Wei and Zhang \cite{Wl} proved that  $K_{0}(\mathscr{C})$  is isomorphic to the index Grothendieck group of $\mathcal{M}$. Besides, we note that the calculation of the Grothendieck groups for  cluster categories has been extensively studied, e.g. \cite{Bar, FE, Da}.
	
	These results inspire us to develop a general framework for the Grothendieck groups of extriangulated categories via higher rigid subcategories. For this purpose, we investigate the split Grothendieck group $K^{\rm sp}_{0}(\mathcal{M})$ of a $d$-rigid subcategory $\mathcal{M}$ in an extriangulated category $(\mathscr{C},\mathbb{E},\mathfrak{s})$. 
	Then we obtain several useful isomorphisms concerning $K^{\rm sp}_{0}(\mathcal{M})$ (see Theorem \ref{main}). As application, we consider the case of $\mathcal{M}$ is a silting subcategory. Then we have the first main theorem of this paper.
	\begin{theoremi}\label{B}   \text{\rm (See Theorem \ref{main2} for  details)} 	
		Let $\mathcal{M}$ be a silting subcategory in  $\mathscr{C}$. Then 
		$$ K_{0}(\mathscr{C}) \cong K_{0}^{\rm sp}(\mathcal{M}).$$
	\end{theoremi}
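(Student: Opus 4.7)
The plan is to construct mutually inverse group homomorphisms between $K_0(\mathscr{C})$ and $K_0^{\rm sp}(\mathcal{M})$. The inclusion $\mathcal{M}\hookrightarrow\mathscr{C}$ sends split short exact sequences in $\mathcal{M}$ to (split) conflations in $\mathscr{C}$, so it induces a well-defined group homomorphism
\[
\iota_{\ast}\colon K_0^{\rm sp}(\mathcal{M}) \longrightarrow K_0(\mathscr{C}), \qquad [M]\mapsto [M].
\]
My first step is to invoke Theorem~\ref{main}, specialized to the case $d=1$, in order to express $K_0^{\rm sp}(\mathcal{M})$ in a form that can be compared termwise with the defining presentation of $K_0(\mathscr{C})$.

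For the surjectivity of $\iota_{\ast}$, the silting hypothesis guarantees that every $X\in\mathscr{C}$ admits a finite resolution by objects of $\mathcal{M}$, assembled from a finite sequence of conflations
\[
0 \to Y_i \to M_i \to Y_{i-1} \to 0
\]
with $Y_{-1}=X$ and $Y_n=0$ for some $n\ge 0$. Iterating the Euler relations attached to these conflations yields $[X]=\sum_{i=0}^{n}(-1)^i[M_i]$ in $K_0(\mathscr{C})$, so $\iota_{\ast}$ hits every generator.

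For injectivity, I would define a candidate inverse
\[
\psi\colon K_0(\mathscr{C}) \longrightarrow K_0^{\rm sp}(\mathcal{M}), \qquad [X] \longmapsto \sum_{i=0}^{n}(-1)^i[M_i]
\]
by means of such an $\mathcal{M}$-resolution. The main work, and the principal obstacle, is showing that $\psi$ is well-defined. Independence of the chosen resolution requires a comparison lemma: given two finite $\mathcal{M}$-resolutions of the same $X$, the rigidity condition $\mathbb{E}^{\ge 1}(\mathcal{M},\mathcal{M})=0$ lets one splice them together and split off trivial direct summands term by term, so that their alternating sums agree in $K_0^{\rm sp}(\mathcal{M})$. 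Additivity of $\psi$ along a conflation $X\rightarrowtail Y\twoheadrightarrow Z$ is then handled by a horseshoe-type construction: build compatible $\mathcal{M}$-resolutions of $X$ and $Z$, lift them to a resolution of $Y$ whose terms are the direct sums, and read off the Euler identity. Both steps rest on the Schanuel-type input that the paper supplies through Theorem~\ref{main}.

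Once $\psi$ is in hand, the identities $\iota_{\ast}\circ\psi=\id_{K_0(\mathscr{C})}$ and $\psi\circ\iota_{\ast}=\id_{K_0^{\rm sp}(\mathcal{M})}$ follow at once: the former is the surjectivity computation read in reverse, and the latter is tautological since any $M\in\mathcal{M}$ has the trivial length-zero resolution by itself. The hard part lies entirely in the well-definedness of $\psi$, which is exactly where the two defining features of silting, namely bounded $\mathcal{M}$-resolutions and rigidity, must work together.
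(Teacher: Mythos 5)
Your argument breaks down at the very first structural claim: that the silting hypothesis gives every $X\in\mathscr{C}$ a finite one-sided resolution $0\to Y_n\to M_n\to\cdots\to M_0\to X$ with all $M_i\in\mathcal{M}$. This is false. The condition $\thick(\mathcal{M})=\mathscr{C}$ only says that $\mathscr{C}$ is generated from $\mathcal{M}$ by extensions, cones, cocones \emph{and} direct summands; it does not place every object in $\mathcal{M}^{R}=\bigcup_t\mathcal{M}_t$ (the objects admitting such a resolution), nor in $\mathcal{M}^{L}$. Already for $\mathscr{C}=D^b(\mod A)$ and $\mathcal{M}=\add A$, one has $\mathcal{M}_t=\mathcal{M}\ast\Sigma\mathcal{M}\ast\cdots\ast\Sigma^t\mathcal{M}$, whose objects have cohomology concentrated in degrees $\leq 0$; the object $\Sigma^{-1}A$ therefore admits no finite right $\mathcal{M}$-resolution of any length. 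Consequently your map $\psi$ is not even defined on all generators of $K_0(\mathscr{C})$, and invoking Theorem~\ref{main} with $d=1$ does not help, since that theorem computes $K_0(\mathcal{M}^t)$, $K_0(\mathcal{M}_t)$, $K_0(\mathcal{M}^L)$, $K_0(\mathcal{M}^R)$ --- none of which is $K_0(\mathscr{C})$ here.

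What the silting hypothesis actually provides (via \cite[Theorem 5.7]{Ad}) is the bounded hereditary cotorsion pair $(\mathcal{M}^L,\mathcal{M}^R)$: every $X$ sits in an $\mathbb{E}$-triangle $F_X\to T_X\to X\dashrightarrow$ with $T_X\in\mathcal{M}^L$ (finite $\mathcal{M}$-coresolution) and $F_X\in\mathcal{M}^R$ (finite $\mathcal{M}$-resolution). The index must therefore be the \emph{two-sided} quantity $\mathbb{R}(X)=\ind^{L}_{\mathcal{M}}(T_X)-\ind^{R}_{\mathcal{M}}(F_X)$, and the real work --- which your proposal has no counterpart for --- is (i) independence of the choice of the triangle $F_X\to T_X\to X$, proved in the paper by comparing two choices via $T_1\oplus F_2\cong T_2\oplus F_1$ using $\mathbb{E}(\mathcal{M}^L,\mathcal{M}^R)=0$ (Lemma~\ref{L-3-1}), and (ii) additivity of $\mathbb{R}$ on arbitrary $\mathbb{E}$-triangles, proved by a two-step reduction to the cases $C\in\mathcal{M}^L$ and $A\in\mathcal{M}^R$ (Proposition~\ref{P-3-1}). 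The parts of your plan that do survive are the well-definedness of the alternating sum for objects that \emph{do} lie in $\mathcal{M}^L$ or $\mathcal{M}^R$ (this is Lemma~\ref{L-B} and Remark~\ref{R-2-6}) and the horseshoe-type additivity within those subcategories (Proposition~\ref{L-7}); but these are inputs to the paper's construction, not a substitute for the two-sided index.
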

We note  that Theorem \ref{B} does not require $\mathscr{C}$ is Krull-Schmidt. It turns out that \cite[Theorem 4.1]{Ad2} holds for any extriangulated categories. Besides, we also have the  following relationship between $K_{0}(\mathscr{C})$ and bounded hereditary cotorsion pairs.
	\begin{corollaryi}\text{\rm (See Corollary \ref{C-5-7})} 	 Let $(\mathcal{T},\mathcal{F})$ be a bounded hereditary  cotorsion pair in $\mathscr{C}$. Then
	$$K_{0}(\mathcal{T})\cong K_{0}(\mathcal{F})\cong K_{0}(\mathscr{C}) \cong K_{0}^{\rm sp}(\mathcal{T}\cap\mathcal{F}).$$
\end{corollaryi}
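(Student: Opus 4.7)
The strategy is to reduce everything to Theorem~\ref{B} by exhibiting the coheart $\mathcal{M}:=\mathcal{T}\cap\mathcal{F}$ as a silting subcategory of each of the three extriangulated categories $\mathscr{C}$, $\mathcal{T}$ and $\mathcal{F}$. The latter two inherit an extriangulated structure from $\mathscr{C}$ because each half of a cotorsion pair is closed under extensions, so the groups $K_{0}(\mathcal{T})$ and $K_{0}(\mathcal{F})$ are well-defined in the first place.

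First, I would verify that $\mathcal{M}$ is silting in $\mathscr{C}$. The vanishing $\mathbb{E}^{i}(\mathcal{M},\mathcal{M})=0$ for all $i\geq 1$ is immediate from $\mathcal{M}\subseteq\mathcal{T}$, $\mathcal{M}\subseteq\mathcal{F}$, and the heredity assumption. The thick-generation condition $\thick(\mathcal{M})=\mathscr{C}$ follows from boundedness: every object of $\mathscr{C}$ sits in a finite filtration obtained by iterating the two approximation conflations of the cotorsion pair, and these filtrations can be refined (using heredity to keep syzygies in $\mathcal{T}$, resp.\ $\mathcal{F}$) until all factors land in $\mathcal{M}$. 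This is the extriangulated analogue of the classical statement that the coheart of a bounded co-$t$-structure is silting.

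Next, I would show that $\mathcal{M}$ remains silting inside $\mathcal{T}$; the dual argument handles $\mathcal{F}$. Ext-vanishing transfers verbatim. For thick-generation within $\mathcal{T}$, given $X\in\mathcal{T}$, the left approximation property produces a conflation $X\rightarrowtail M\twoheadrightarrow X'$ with $M\in\mathcal{M}$, and heredity, via the long exact sequence in $\mathbb{E}^{*}(-,\mathcal{F})$, forces $X'\in\mathcal{T}$. Iterating this construction and invoking boundedness yields a finite coresolution of $X$ by objects of $\mathcal{M}$ whose terms all lie in $\mathcal{T}$, placing $X$ in $\thick_{\mathcal{T}}(\mathcal{M})$.

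Finally, applying Theorem~\ref{B} in turn to the three extriangulated categories $\mathscr{C}$, $\mathcal{T}$ and $\mathcal{F}$ with the common silting subcategory $\mathcal{M}$ gives
$$K_{0}(\mathscr{C})\cong K_{0}^{\rm sp}(\mathcal{M}),\qquad K_{0}(\mathcal{T})\cong K_{0}^{\rm sp}(\mathcal{M}),\qquad K_{0}(\mathcal{F})\cong K_{0}^{\rm sp}(\mathcal{M}),$$
and the four isomorphisms in the statement follow by transitivity. The main obstacle I expect is the second step: verifying that $\mathcal{M}$ is silting in $\mathcal{T}$ and $\mathcal{F}$ with their restricted extriangulated structures, since one must carefully rework the ambient cotorsion pair resolutions so that they stay inside the relevant subcategory and confirm that boundedness in $\mathscr{C}$ translates into bounded coresolutions within $\mathcal{T}$ (resp.\ resolutions within $\mathcal{F}$).
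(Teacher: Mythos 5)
Your strategy agrees with the paper's for the isomorphism $K_{0}(\mathscr{C})\cong K_{0}^{\rm sp}(\mathcal{T}\cap\mathcal{F})$: set $\mathcal{M}=\mathcal{T}\cap\mathcal{F}$, observe via \cite[Theorem 5.7]{Ad} that $\mathcal{M}$ is silting in $\mathscr{C}$, and apply Theorem \ref{main2}. For the other two isomorphisms you take a genuinely different and heavier route. The paper never regards $\mathcal{T}$ and $\mathcal{F}$ as extriangulated categories equipped with a silting subcategory; it writes $\mathcal{T}=\mathcal{M}^{L}$ and $\mathcal{F}=\mathcal{M}^{R}$ (again by \cite[Theorem 5.7]{Ad}) and then invokes the index isomorphisms $\ind^{L}_{\mathcal{M}}\colon K_{0}(\mathcal{M}^{L})\to K_{0}^{\rm sp}(\mathcal{M})$ and $\ind^{R}_{\mathcal{M}}\colon K_{0}(\mathcal{M}^{R})\to K_{0}^{\rm sp}(\mathcal{M})$ of Theorem \ref{main}(3) (see Remark \ref{R-4-6}(1)), which are proved by the elementary observation that the index map is inverse to the canonical map $K_{0}^{\rm sp}(\mathcal{M})\hookrightarrow K_{0}(\mathcal{M}^{L})$. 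Your approach would buy a cleaner-sounding statement (``the coheart is silting in each of the three categories''), but only at the cost of re-establishing the silting axioms and the cotorsion-pair machinery inside $\mathcal{T}$ and $\mathcal{F}$, all of which the paper's argument avoids.

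That second step is also where your proposal has a genuine gap. The assertion that ``Ext-vanishing transfers verbatim'' to $\mathcal{T}$ is not justified: the higher extension bifunctors $\mathbb{E}^{i}$ ($i\geq 2$) of the extension-closed subcategory $\mathcal{T}$, in the sense of \cite{Go2}, are not simply the restrictions of those of $\mathscr{C}$, so $\mathbb{E}^{i}(\mathcal{M},\mathcal{M})=0$ in $\mathscr{C}$ does not formally yield $\infty$-rigidity of $\mathcal{M}$ inside $\mathcal{T}$. The correct argument is different: for $X\in\mathcal{T}$ the approximation triangle $X\rightarrow F^{X}\rightarrow T^{X}\dashrightarrow$ has $F^{X}\in\mathcal{T}\cap\mathcal{F}=\mathcal{M}$ by extension-closure of $\mathcal{T}$, and $\mathbb{E}(\mathcal{T},\mathcal{M})\subseteq\mathbb{E}(\mathcal{T},\mathcal{F})=0$, so $\mathcal{M}$ consists of injective objects of $\mathcal{T}$ and $\mathcal{T}$ has enough of them; $\infty$-rigidity then follows as in Example \ref{E-1}(1). (Relatedly, your claim that heredity and a long exact sequence force the cone $X'$ into $\mathcal{T}$ is not needed and does not quite work as stated --- $\mathbb{E}(X',\mathcal{F})$ is only exhibited as a quotient of $\Hom(X,\mathcal{F})$; the membership $X'\in\mathcal{T}$ is already part of the cotorsion-pair axiom.) Unless you supply these repairs, the application of Theorem \ref{main2} to $\mathcal{T}$ and $\mathcal{F}$ is not licensed; the quickest fix is to replace your second step by the paper's appeal to Theorem \ref{main}(3).
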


We also consider the case of  $\mathcal{M}$ is a $d$-cluster tilting subcategory. By using various indices, we establish a homomorphism $\Phi:\im(\mathbb{E}(-,?)|_{\mathcal{M}})\longrightarrow K_{0}^{\rm sp}(\mathcal{M})$. The index Grothendieck group $K_{0}^{\rm in}(\mathcal{M})$ is defined as the quotient group  $K_{0}^{\rm sp}(\mathcal{M})/\im\Phi$. The second main result of this paper is the following.
\begin{theoremi}\label{C} \text{\rm (See Theorem \ref{main3} for details)} 	 Let $\mathcal{M}$ be a $d$-cluster tilting subcategory in $\mathscr{C}$. Then $$K_{0}(\mathscr{C})\cong K_{0}^{\rm in}(\mathcal{M}).$$
	\end{theoremi}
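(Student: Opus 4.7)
The plan is to construct mutually inverse homomorphisms between $K_{0}(\mathscr{C})$ and $K_{0}^{\rm in}(\mathcal{M})=K_{0}^{\rm sp}(\mathcal{M})/\im\Phi$, using the fact that a $d$-cluster tilting subcategory provides a finite $\mathcal{M}$-resolution (of length at most $d$) of every object of $\mathscr{C}$.

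First, I would show that the inclusion $\mathcal{M}\hookrightarrow\mathscr{C}$ induces a well-defined map $\Psi^{\prime}:K_{0}^{\rm in}(\mathcal{M})\to K_{0}(\mathscr{C})$. Each generator of $\im\Phi$ arises from an extension $M^{\prime}\rightarrowtail E\twoheadrightarrow M$ with $M,M^{\prime}\in\mathcal{M}$ and, via the indices defined before the statement, records the discrepancy in $K_{0}^{\rm sp}(\mathcal{M})$ between $[M]+[M^{\prime}]$ and the alternating sum of the terms of an $\mathcal{M}$-resolution of $E$. Both expressions equal $[E]$ in $K_{0}(\mathscr{C})$ by iterated Euler relations, so the discrepancy lies in the kernel of $K_{0}^{\rm sp}(\mathcal{M})\to K_{0}(\mathscr{C})$, and $\Psi^{\prime}$ descends to the quotient.

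For the inverse $\Psi:K_{0}(\mathscr{C})\to K_{0}^{\rm in}(\mathcal{M})$, I would fix, for each $X\in\mathscr{C}$, a $d$-cluster tilting resolution
\[
0\to M_{d}\to M_{d-1}\to\cdots\to M_{0}\to X\to 0
\]
obtained by splicing $d$ conflations, and set $\operatorname{ind}_{\mathcal{M}}(X)=\sum_{i=0}^{d}(-1)^{i}[M_{i}]\in K_{0}^{\rm sp}(\mathcal{M})$. Independence modulo $\im\Phi$ of the chosen resolution is established by a standard comparison of two resolutions, in which every correction term is a middle term of an extension with endpoints in $\mathcal{M}$ and therefore lies, by construction, in $\im\Phi$; here the isomorphisms in Theorem \ref{main} for the split Grothendieck group of a $d$-rigid subcategory supply the necessary manipulations. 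Additivity of $\Psi$ on a conflation $X\rightarrowtail Y\twoheadrightarrow Z$ is then checked by the horseshoe construction for extriangulated categories: splicing resolutions of $X$ and $Z$ yields a resolution of $Y$ in which the middle terms contribute exactly elements of $\im\Phi$, so $\operatorname{ind}_{\mathcal{M}}(Y)=\operatorname{ind}_{\mathcal{M}}(X)+\operatorname{ind}_{\mathcal{M}}(Z)$ in $K_{0}^{\rm in}(\mathcal{M})$.

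Finally, $\Psi\circ\Psi^{\prime}=\operatorname{id}$ is immediate since $\operatorname{ind}_{\mathcal{M}}(M)=[M]$ for $M\in\mathcal{M}$, while $\Psi^{\prime}\circ\Psi=\operatorname{id}$ follows from the identity $[X]=\sum_{i}(-1)^{i}[M_{i}]$ already valid in $K_{0}(\mathscr{C})$ by iterated Euler relations along the spliced conflations. The principal obstacle will be the well-definedness of $\operatorname{ind}_{\mathcal{M}}$ modulo $\im\Phi$ together with its compatibility with conflations: one must verify that every ambiguity in choosing $d$-cluster tilting resolutions is of exactly the shape measured by $\Phi$. This is where the $d$-rigidity of $\mathcal{M}$ and the explicit description of $K_{0}^{\rm sp}(\mathcal{M})$ furnished by Theorem \ref{main} play the decisive role.
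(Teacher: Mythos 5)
Your overall architecture coincides with the paper's: the paper first proves (Theorem \ref{main}(2)) that $\ind^{L}_{\mathcal{M}}$ and $\ind^{R}_{\mathcal{M}}$ induce isomorphisms $K_{0}(\mathcal{M}^{d})\cong K_{0}^{\rm in}(\mathcal{M})\cong K_{0}(\mathcal{M}_{d})$ for any $d$-rigid $\mathcal{M}$, and then observes (Lemma \ref{L-3-10}) that $\mathscr{C}=\mathcal{M}^{d}=\mathcal{M}_{d}$ for a cluster tilting subcategory. Your two maps, the vanishing of $\im\Phi$ in $K_{0}(\mathscr{C})$ by iterated Euler relations, and the verification of the two composites are all exactly as in the paper.

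However, there is a genuine gap at the step you yourself flag as ``the principal obstacle.'' You propose to get additivity of $\ind_{\mathcal{M}}$ on a conflation $X\rightarrowtail Y\twoheadrightarrow Z$ by ``the horseshoe construction,'' with the middle terms contributing elements of $\im\Phi$. The horseshoe construction does not apply here: to splice resolutions of the two end terms one needs, at the first stage, the vanishing $\mathbb{E}(Z,M_{0})=0$ for the first term $M_{0}$ of the chosen resolution (this is precisely the hypothesis ``$t<d$ or $\mathbb{E}(C,\mathcal{M})=0$'' in Proposition \ref{L-7}, via \cite[Lemmas 4.14, 4.15]{Hu}), and for an arbitrary conflation in $\mathscr{C}=\mathcal{M}^{d}$ this vanishing fails. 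When it fails there is no resolution of $Y$ by the direct sums $M_{i}^{X}\oplus M_{i}^{Z}$ at all, so there are no ``middle terms'' whose contribution one could inspect. The paper's Proposition \ref{L-3-11} replaces the horseshoe by an (ET4)-based comparison producing an auxiliary object $S\in\mathcal{M}^{d}\cap\mathcal{M}_{1}$ with
$\ind^{L}_{\mathcal{M}}(X)-\ind^{L}_{\mathcal{M}}(Y)+\ind^{L}_{\mathcal{M}}(Z)=\ind^{L}_{\mathcal{M}}(S)-\ind^{R}_{\mathcal{M}}(S)=\Psi(\mathbb{E}(S,-)|_{\mathcal{M}})$,
together with the identification of this functor with $\coker(G_{\mathcal{M}}(f))$; this is the technical heart of the theorem and is not supplied by your sketch. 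A secondary imprecision: the generators of $\im\Phi$ are not differences attached to extensions $M'\rightarrowtail E\twoheadrightarrow M$ with $M,M'\in\mathcal{M}$, but the elements $\ind^{R}_{\mathcal{M}}(W)-\ind^{L}_{\mathcal{M}}(W)$ for $W\in\mathcal{M}_{d}\cap\mathcal{M}^{1}$, equivalently (Remark \ref{R-3-13}) alternating sums over $\mathbb{E}$-triangle sequences with all terms in $\mathcal{M}$; your conclusion that they die in $K_{0}(\mathscr{C})$ is nonetheless correct. Note also that the paper proves the stronger fact (Lemma \ref{L-B}, Remark \ref{R-2-6}) that the index is already well defined in $K_{0}^{\rm sp}(\mathcal{M})$ itself, not merely modulo $\im\Phi$, using the vanishing $\mathbb{E}(K_{i},\mathcal{M})=0$ of the intermediate cones guaranteed by $d$-rigidity.
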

Under  additional  assumptions, Theorem \ref{C} was proved in \cite[Theorem 3.13]{Wl}. It seems that $K_{0}(\mathscr{C})$ is isomorphic to a quotient group of $K_{0}^{\rm sp}(\mathcal{M})$ rather than $K_{0}^{\rm sp}(\mathcal{M})$. However, if we consider the relative extriangulated category $(\mathscr{C},\mathbb{E}_{\mathcal{M}},\mathfrak{s}|_{\mathbb{E}_{\mathcal{M}}})$ , then the Grothendieck group of $(\mathscr{C},\mathbb{E}_{\mathcal{M}},\mathfrak{s}|_{\mathbb{E}_{\mathcal{M}}})$ is isomorphic to   $K_{0}^{\rm sp}(\mathcal{M})$. We refer to Theorem \ref{main3} for more details.

Finally, we compute the Grothendieck groups of $d$-cluster categories of type $A_n$ by using Theorem \ref{C}. The following result can be regarded as a complete version of \cite[Proposition 7.21]{FE}.
	\begin{theoremi}\label{D} \text{\rm ( See Theorem \ref{main5} for details)} Let $\mathcal{C}_{A_{n}}^{d}$ be the $d$-cluster category of type $A_{n}$. Then $$K_0(\mathcal{C}_{A_{n}}^{d})\cong\left\{
		\begin{aligned}
			\mathbb{Z}/(n+1)\mathbb{Z}, &~~\text{if $d$ is even,}\\
			\mathbb{Z}, &~~ \text{if $d$ and $n$ are odd,}\\
			0, & ~~\text{if $d$ is odd and $n$ is even}.
		\end{aligned}
		\right.
		$$
	\end{theoremi}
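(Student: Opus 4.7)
The plan is to apply Theorem~\ref{C} to an explicit $d$-cluster tilting subcategory $\mathcal{M}\subseteq\mathcal{C}_{A_{n}}^{d}$, thereby reducing the calculation to presenting $K_{0}^{\rm in}(\mathcal{M})$ by generators and relations, and then to determine its cokernel via Smith normal form.

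I would first invoke the Baur--Marsh-type combinatorial model: the indecomposables of $\mathcal{C}_{A_{n}}^{d}$ correspond bijectively to the $d$-allowable diagonals of a fixed regular polygon $\Pi$, and the $d$-cluster tilting subcategories correspond exactly to its $(d+2)$-angulations, each consisting of $n$ diagonals. Choose $\mathcal{M}=\add\{M_{1},\dots,M_{n}\}$ to be the fan $(d+2)$-angulation anchored at one vertex. Then $K_{0}^{\rm sp}(\mathcal{M})\cong\Z^{n}$ with basis $[M_{1}],\dots,[M_{n}]$, and Theorem~\ref{C} yields
\[
K_{0}(\mathcal{C}_{A_{n}}^{d})\cong K_{0}^{\rm in}(\mathcal{M})=\Z^{n}/\im\Phi.
\]

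The subgroup $\im\Phi$ is generated by the relations $[M_{i}]+[M_{j}]-\ind_{\mathcal{M}}(X)=0$ arising from $\mathbb{E}$-extensions $M_{i}\infl X\defl M_{j}$ with both end-terms in $\mathcal{M}$. These I would read off the combinatorial model: $\mathbb{E}(M_{j},M_{i})\neq 0$ exactly when the diagonals $M_{i}$ and $M_{j}$ co-bound a common $(d+2)$-piece of the angulation (possibly after a shift), and the $\mathcal{M}$-index of $X$ then decomposes as a signed sum of a small window of $[M_{k}]$'s. The crucial feature is a sign $\varepsilon\in\{\pm 1\}$ whose value is $+1$ when $d$ is even and $-1$ when $d$ is odd; it records the parity of the cohomological degree of the shift~$[d]$.

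Collecting the relations produces a presentation matrix with entries in $\{0,\pm 1\}$. For even $d$ this matrix is, up to elementary row/column operations, the standard tridiagonal $A$-type matrix whose Smith normal form has a single nontrivial elementary divisor equal to $n+1$, giving $K_{0}\cong\Z/(n+1)\Z$. For odd $d$ the alternating signs cause the matrix to have full rank $n$ when $n$ is even (so $K_{0}=0$) and rank $n-1$ with trivial torsion when $n$ is odd (so $K_{0}\cong\Z$). The principal obstacle in executing this plan will be pinning down the signs $\varepsilon$ correctly, that is, tracking the action of $[d]$ on the combinatorial model and transferring it through the definition of $\ind_{\mathcal{M}}$; once those signs are secured, the cokernel computation is a direct Smith-normal-form calculation.
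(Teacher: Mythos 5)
Your overall route is the same as the paper's: reduce via the cluster-tilting theorem to $K_{0}^{\rm in}(\mathcal{M})$ for an explicit cluster tilting subcategory coming from a $(d+2)$-angulation of the polygon (you take a fan, the paper a zigzag), write down a presentation, and finish with linear algebra. The linear algebra you forecast is consistent with the relations the paper ultimately obtains: for even $d$ a tridiagonal type-$A$ matrix of determinant $n+1$, for odd $d$ a matrix of rank $n$ or $n-1$ according to the parity of $n$. However, there is a genuine gap in your description of the relations. You propose to generate $\im\Phi$ by elements $[M_i]+[M_j]-\ind_{\mathcal{M}}(X)$ attached to $\mathbb{E}$-extensions $M_i\rightarrowtail X\twoheadrightarrow M_j$ with \emph{both end terms in} $\mathcal{M}$. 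But a cluster tilting subcategory is in particular $1$-rigid, so $\mathbb{E}(M_j,M_i)=0$ for all $i,j$: every such extension splits, each of your relations reads $[M_i]+[M_j]-[M_i\oplus M_j]=0$, and your presentation would output $K_{0}^{\rm in}(\mathcal{M})\cong\mathbb{Z}^{n}$ in every case, which is false.

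The relations that actually define $K_{0}^{\rm in}(\mathcal{M})$ are the alternating sums $\sum_{i=0}^{d+2}(-1)^{i}[X_i]$ over $(d+3)$-term $\mathbb{E}$-triangle sequences with \emph{all} terms in $\mathcal{M}$ (Remark \ref{R-3-13}), equivalently the differences $\ind^{R}_{\mathcal{M}}(S)-\ind^{L}_{\mathcal{M}}(S)$ for $S\in\mathcal{M}_{d}\cap\mathcal{M}^{1}$. Your parenthetical ``possibly after a shift'' points at the right phenomenon, but once a shift enters, the end terms of the relevant triangle leave $\mathcal{M}$ and the relation is no longer of the shape $[M_i]+[M_j]-\ind_{\mathcal{M}}(X)$; one must resolve the shifted objects by $\mathcal{M}$ and splice. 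This is precisely what the paper does: it computes the Auslander--Reiten triangles linking $\Sigma^{j}T_i$ to $T_{i-1}\oplus T_{i+1}$, extends each to a $(d+3)$-angle with all terms in $\add T$, and reads off relations of the form $(1+(-1)^{d})[T_i]\pm[T_{i-1}\oplus T_{i+1}]=0$ --- the sign $\varepsilon=(-1)^{d}$ you mention lives exactly here. Producing and verifying these $(d+3)$-angles is the substance of the proof, not a routine verification to be deferred. (A further point, which your Smith-normal-form step tacitly assumes and which deserves attention in any write-up, is that the exhibited $(d+3)$-angles must generate all of $\im\Phi$; otherwise one only identifies $K_0(\mathcal{C}_{A_n}^{d})$ as a quotient of the stated group.)
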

	\vspace{5mm}
	
	$\mathbf{Organization.}$ This paper is organized as follows. In Section 2, we summarize some fundamental definitions and properties of  extriangulated categories. In Section 3, we investigate the split  Grothendieck groups  of  $d$-rigid subcategories and obtain some useful  results. In Section 4, we apply our results on silting subcategories  and prove Theorem \ref{B}. Section 4 is concerned with the case of $d$-cluster tilting subcategories and then prove Theorem \ref{C}. Finally, in Section 5,  we review the definition of $d$-cluster categories of type $A_{n}$ and its geometric model. Then we give a proof of Theorem \ref{D}.
	\vspace{5mm}
	
	$\mathbf{Conventions~and~Notation.}$ Throughout this paper, we assume that all considered categories are skeletally small  and that the subcategories are  closed under isomorphisms. For an additive category $\mathscr{C}$, we denote by $\ind(\mathscr{C})$ the set of isomorphism class of indecomposable objects in $\mathscr{C}$. Given an object $X\in\mathscr{C}$, we denote by $\add(X)$ the full subcategory of $\mathscr{C}$ consisting of finite direct sums of direct summands of $X$.



	\section{Preliminaries}
	In this paper, we always assume that $\mathscr{C}:=(\mathscr{C},\mathbb{E},\mathfrak{s})$ is an extriangulated category. We omit the precise definition but refer to \cite[Section 2]{Na} for
	details.
	
	By Yoneda's lemma, any $\mathbb{E}$-extension $\delta\in \mathbb{E}(C,A)$ induces natural transformations
	$$\delta_{\sharp}: \mathscr{C}(-,C)\rightarrow\mathbb{E}(-,A)~~\text{and}~~\delta^{\sharp}: \mathscr{C}(A,-)\rightarrow\mathbb{E}(C,-).$$ For any $X\in\mathscr{C}$, these
	$(\delta_{\sharp})_X$ and $(\delta^{\sharp})_X$ are defined by
	$(\delta_{\sharp})_X:\mathscr{C}(X,C)\rightarrow\mathbb{E}(X,A), f\mapsto f^\ast\delta$
	and $(\delta^{\sharp})_X:\mathscr{C}(A,X)\rightarrow\mathbb{E}(C,X), g\mapsto g_\ast\delta.$ A sequence $$A\stackrel{x}{\longrightarrow}B\stackrel{y}{\longrightarrow}C\stackrel{\delta}\dashrightarrow$$ in $\mathscr{C}$ is called  an  {\em $\mathbb{E}$-triangle} if $\delta\in\mathbb{E}(C,A)$ and $\mathfrak{s}(\delta)=[A\stackrel{x}{\longrightarrow}B\stackrel{y}{\longrightarrow}C]$. Here $[A\stackrel{x}{\longrightarrow}B\stackrel{y}{\longrightarrow}C]$ is the equivalence class of $A\stackrel{x}{\longrightarrow}B\stackrel{y}{\longrightarrow}C$. We will write $\cocone(y):=A$ and $\cone(x):=C$ if necessary. 	We begin with recalling the following basic definitions.

	\begin{definition}  Let $\mathcal{X},\mathcal{Y}$ be two subcategories of $\mathscr{C}$.
		
		$(1)$ We denote by $\mathcal{X}*\mathcal{Y}$ the subcategory of $\mathscr{C}$ consisting of $M\in\mathscr{C}$ such that there is an $\mathbb{E}$-triangle
		$$X\stackrel{}{\longrightarrow}M\stackrel{}{\longrightarrow}Y\stackrel{}\dashrightarrow$$
		with $X\in \mathcal{X}$, $Y\in\mathcal{Y}$.
		We say $\mathcal{X}$ is {\em extension-closed} if $\mathcal{X}*\mathcal{X}\subseteq \mathcal{X}$.
		
		$(2)$ We denote by $\cone(\mathcal{X},\mathcal{Y})$ the subcategory of $\mathscr{C}$ consisting of $M\in\mathscr{C}$ such that there is an $\mathbb{E}$-triangle $$X\stackrel{}{\longrightarrow}Y\stackrel{}{\longrightarrow}M\stackrel{}\dashrightarrow$$
		with $X\in \mathcal{X}$, $Y\in\mathcal{Y}$. Dually, we define subcategory $\cocone(\mathcal{X},\mathcal{Y})$.
		
		$(3)$ We say $\mathcal{X}$ is {\em closed under cones} if $\cone(\mathcal{X},\mathcal{X})\subseteq \mathcal{X}.$ Dually, we say $\mathcal{X}$ is {\em closed under cocones} if $\cocone(\mathcal{X},\mathcal{X})\subseteq \mathcal{X}.$  
	\end{definition}

	An object $P\in\mathscr{C}$ is called {\em projective} if $\mathbb{E}(P,\mathscr{C})=0$. Dually, an object $I\in\mathscr{C}$ is called {\em injective} if $\mathbb{E}(\mathscr{C},I)=0$. We denote $\mathcal{P}(\mathscr{C})$ and $\mathcal{I}(\mathscr{C})$ the subcategories of projective objects and injective objects, respectively. We say $\mathscr{C}$ has enough projective objects if  $\mathscr{C}=\cone(\mathscr{C},\mathcal{P})$. Dually, we  say $\mathscr{C}$ has enough injective objects if  $\mathscr{C}=\cocone(\mathcal{I},\mathscr{C})$. 
	
	For $n>0$, the higher positive extensions $\mathbb{E}^n(-,-)$ in an extriangulated category have been defined in \cite{Go2}. If $\mathscr{C}$ has enough projective objects or enough injective objects, these $\mathbb{E}^n$ are isomorphic to those defined in \cite{Li} (cf. \cite[Remark 3.4]{Go2}). For a subcategory $\mathcal{X}$ of $\mathscr{C}$,  we define the full subcategory
	$$^{\perp_{[1,n]} }\mathcal{X}=\{M\in \mathscr{C}~|~\mathbb{E}^{i}(M,\mathcal{X})=0~\text{for}~i=1,2\cdots n \}.$$
	Dually, we define the full subcategory $\mathcal{X}^{\perp_{[1,n]} }$ in $\mathscr{C}$. The following proposition will be useful.
	\begin{proposition}$($\cite[Theorem 3.5]{Go}$)$ Let $A\stackrel{}{\longrightarrow}B\stackrel{}{\longrightarrow}C\stackrel{}\dashrightarrow$ be an $\mathbb{E}$-triangle in $\mathscr{C}$.
		
		$(1)$ We have a long exact sequence
		\begin{equation*}\begin{split}&\Hom(C,-)\rightarrow\Hom(B,-)\rightarrow\Hom(A,-)\rightarrow\mathbb{E}^{1}(C,-)\rightarrow\cdots\\
				&\cdots\rightarrow\mathbb{E}^{n-1}(A,-)\rightarrow\mathbb{E}^{n}(C,-)\rightarrow\mathbb{E}^{n}(B,-)\rightarrow\mathbb{E}^{n}(A,-)\rightarrow\cdots.\end{split}\end{equation*}
		
		$(2)$ We have a long exact sequence
		\begin{equation*}\begin{split}&\Hom(-,A)\rightarrow\Hom(-,B)\rightarrow\Hom(-,C)\rightarrow\mathbb{E}^{1}(-,A)\rightarrow\cdots\\
				&\cdots\rightarrow\mathbb{E}^{n-1}(-,C)\rightarrow\mathbb{E}^{n}(-,A)\rightarrow\mathbb{E}^{n}(-,B)\rightarrow\mathbb{E}^{n}(-,C)\rightarrow\cdots.\end{split}\end{equation*}
	\end{proposition}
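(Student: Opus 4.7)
The plan is to prove part $(1)$; part $(2)$ follows by a dual argument. The starting point is the basic six-term exact sequence
\[
\Hom(C,X) \to \Hom(B,X) \to \Hom(A,X) \xrightarrow{\delta^{\sharp}} \mathbb{E}(C,X) \to \mathbb{E}(B,X) \to \mathbb{E}(A,X),
\]
where $\delta \in \mathbb{E}(C,A)$ is the class of the given $\mathbb{E}$-triangle and $\delta^{\sharp}$ is Yoneda pushout along $\delta$. Exactness of this sequence is a direct consequence of the axioms of an extriangulated category. The task is then to splice such six-term sequences together to produce a long exact sequence in all positive degrees.

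Following the framework of \cite{Go2}, I would model $\mathbb{E}^{n}(C,X)$ by equivalence classes of $n$-fold composable $\mathbb{E}$-extensions
\[
X \to Y_{n-1} \to Y_{n-2} \to \cdots \to Y_0 \to C,
\]
in which each successive pair of morphisms is obtained by splicing an $\mathbb{E}$-triangle. In this description, the horizontal maps $\mathbb{E}^{n}(C,X) \to \mathbb{E}^{n}(B,X) \to \mathbb{E}^{n}(A,X)$ are induced by pullback along $B \to C$ and $A \to B$, while the connecting homomorphism $\mathbb{E}^{n-1}(A,X) \to \mathbb{E}^{n}(C,X)$ is defined by splicing the fixed $\mathbb{E}$-triangle $A \to B \to C \dashrightarrow$ onto the right-hand end of an $(n-1)$-extension ending at $A$. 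Compositions of two consecutive maps vanish by construction, essentially because pulling the appended $\mathbb{E}$-triangle back along $A \to B$ trivialises it, and because the composite $A \to B \to C$ is zero at the level of $\mathbb{E}$-classes.

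Exactness at each term is then a diagram chase on $n$-extensions, generalising the classical Yoneda argument for $\Ext^{n}$. For instance, at $\mathbb{E}^{n}(B,X)$ one shows that an $n$-extension whose pullback along $A\to B$ becomes split produces, via the mapping-cone-like construction available in extriangulated categories, a factorisation through the $\mathbb{E}$-triangle $A\to B\to C\dashrightarrow$, exhibiting the class as the image of an element of $\mathbb{E}^{n}(C,X)$. Exactness at $\mathbb{E}^{n}(A,X)$ and at $\mathbb{E}^{n+1}(C,X)$ is handled by analogous splicing/factorisation arguments. The main obstacle is making these diagram chases rigorous without an ambient abelian or triangulated category to work in; this is precisely what the systematic theory of higher extensions in \cite{Go2} is designed to support, after which the argument proceeds by the same formal pattern as the classical long exact sequence for $\Ext$. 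Part $(2)$ is obtained by dualising, replacing pushouts with pullbacks throughout and reading the $\mathbb{E}$-triangle in the opposite direction.
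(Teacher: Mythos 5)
The paper does not actually prove this proposition: it is imported verbatim from \cite[Theorem 3.5]{Go}, so there is no internal argument to compare yours against. Judged on its own terms, your outline correctly reproduces the skeleton of the construction used in the cited source: the six-term exact sequence from \cite{Na} as the degree-zero part, higher extensions modelled by spliced $n$-fold $\mathbb{E}$-extensions, connecting maps given by splicing with the fixed $\mathbb{E}$-triangle $A\to B\to C\dashrightarrow$, and vanishing of consecutive composites because $y^{\ast}\delta=0$ and $y\circ x=0$.

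As a proof, however, the proposal has a gap exactly where the theorem has content. First, the bifunctors $\mathbb{E}^{n}$ that this paper uses are those of \cite{Go2}, which are introduced there through an axiomatic framework for positive extension structures; identifying them with your spliced-extension model is itself a step to be established rather than assumed (the paper only records, via \cite[Remark 3.4]{Go2}, that they agree with the \cite{Li} construction when there are enough projectives or injectives). Second, and more seriously, the assertion that ``exactness at each term is a diagram chase on $n$-extensions'' is precisely the part that does not transfer verbatim from the classical Yoneda argument: in a general extriangulated category one cannot form arbitrary pushouts and pullbacks of conflations, the equivalence relation on $n$-fold extensions is only generated by morphisms of sequences, and the ``mapping-cone-like construction'' you invoke at $\mathbb{E}^{n}(B,X)$ must be assembled from (ET4), $\mathrm{(ET4)^{op}}$ and \cite[Proposition 3.15]{Na} and then shown to respect that equivalence relation. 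You explicitly defer all of this to \cite{Go2}; that is a legitimate citation (and is effectively what the paper itself does), but it means your text is an accurate summary of the strategy behind \cite[Theorem 3.5]{Go} rather than a self-contained proof of the statement.
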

	
	A finite sequence
	$$X_{n}\stackrel{d_{n}}{\longrightarrow}X_{n-1}\stackrel{d_{n-1}}{\longrightarrow}\cdots\stackrel{d_{2}}{\longrightarrow} X_{1}\stackrel{d_{1}}{\longrightarrow}X_{0}$$
	in $\mathscr{C}$ is said to be an
	{\em $\mathbb{E}$-triangle sequence}, if there exist $\mathbb{E}$-triangles $X_{n}\stackrel{d_{n}}{\longrightarrow}X_{n-1}\stackrel{f_{n-1}}{\longrightarrow}K_{n-1}\stackrel{}\dashrightarrow$,
	$K_{i+1}\stackrel{g_{i+1}}{\longrightarrow}X_{i}\stackrel{f_{i}}{\longrightarrow}K_{i}\stackrel{}\dashrightarrow,~~1<i<n-1$,
	and $K_{2}\stackrel{g_{2}}{\longrightarrow}X_{1}\stackrel{d_1}{\longrightarrow}X_0\stackrel{}\dashrightarrow$ such that $d_i=g_if_i$ for any $1<i<n$.  When we want to emphasize the objects $K_{2},\cdots,K_{n-1}$, we will write
	$$(X_{n}\stackrel{d_{n}}{\longrightarrow}X_{n-1}\stackrel{d_{n-1}}{\longrightarrow}\cdots \stackrel{d_{2}}{\longrightarrow} X_{1}\stackrel{d_{1}}{\longrightarrow}X_{0})\sim(K_{n-1},K_{n-2},~\cdots,K_{2}).$$
	
	\begin{remark}
		It should be noted that exact categories and triangulated categories are typical examples of extriangulated categories (cf. \cite[Example 2.13, Proposition 3.22]{Na}. Besides, there are many important extriangulated categories which are neither exact nor triangulated (cf. \cite{Go},\cite{Pa3}).
	\end{remark}

	\section{Indices and Grothendieck groups}
	In this paper, we fix an integer $d\geq1$. We start with introducing the following notations.
	
	\begin{definition}
		Let $\mathcal{M}$ be an additive full subcategory of $\mathscr{C}$.
		\begin{enumerate}
			\item We say $\mathcal{M}$ is {\em $d$-rigid} if $\mathbb{E}^{i}(\mathcal{M},\mathcal{M})=0$ for any $1\leq i\leq d$.
			
			\item We say $\mathcal{M}$ is {\em $\infty$-rigid}  if $\mathbb{E}^{i}(\mathcal{M},\mathcal{M})=0$ for any $i\geq1$.
		\end{enumerate}
	\end{definition}
	
	It is obvious that a $\infty$-rigid subcategory is always $d$-rigid. There are a list of examples of $d$-rigid subcategories in various setting.
	
	\begin{example}\label{E-1} (1) It is clear that  $\mathcal{P}(\mathscr{C})$ and $\mathcal{I}(\mathscr{C})$ are $1$-rigid subcategories. Note that when $\mathscr{C}$ has enough projective objects (resp. injective objects), by the definition of higher extensions (cf. \cite[Section 5.1]{Li}), we can check that  $\mathcal{P}(\mathscr{C})$ (resp. $\mathcal{I}(\mathscr{C})$ ) is  $\infty$-rigid.
		
		$(2)$   Let $\mathcal{T}$ and  $\mathcal{F}$ be two subcategories of $\mathscr{C}$ which are closed under direct summands. We say $(\mathcal{T},\mathcal{F})$ is a {\em cotorsion pair} (cf. \cite[Definition 4.1]{Na}) if it satisfies the following conditions:
		\begin{enumerate}
			\item[(a)] $\mathbb{E}(\mathcal{T},\mathcal{F})=0.$
			\item[(b)] For any $X\in\mathscr{C}$, there exists two $\mathbb{E}$-triangles
			$$F_{1}\stackrel{}{\longrightarrow} T_{1}{\longrightarrow}X\stackrel{}\dashrightarrow~\text{and}~X\stackrel{}{\longrightarrow} F_{2}{\longrightarrow}T_{2}\stackrel{}\dashrightarrow$$
			such that $T_i\in \mathcal{T}$ and $F_i\in \mathcal{F}$.
		\end{enumerate}
		We say $(\mathcal{T},\mathcal{F})$ is  {\em hereditary} if $\mathbb{E}^{i}(\mathcal{T},\mathcal{F})=0$ for any $i\geq2$. It is obvious that $\mathcal{T}\cap\mathcal{F}$ is $1$-rigid and its  $\infty$-rigid when $(\mathcal{T},\mathcal{F})$ is hereditary.

		$(3)$ Let $\mathcal{C}_{A_n}^{d}$ be the $d$-cluster titling category of type $A_n$.
		In \cite{Ba}, the authors gave a geometric description of $\mathcal{C}_{A_n}^{d}$. By this, we can identify each indecomposable object in  $\mathcal{C}_{A_n}^{d}$ with a $d$-diagonal in a regular $(d(n+1)+2)$-gon. As shown in \cite[Proposition 2]{Th},  a subcategory $\mathcal{X}$ of $\mathcal{C}_{A_n}^{d}$ is $d$-rigid if and only if  $u$ does not cross $v$ for any $u,v\in\ind(\mathcal{X}).$ For instance, the Auslander-Reiten quiver of $\mathcal{C}_{A_3}^{2}$ is given by
		$$
		{\small
			\xymatrix@-6.5mm{
					& &\ar@{.}[ll](1,8)\ar[rd]\ar@{.}[rr] & 
					& (3,10)\ar[rd]\ar@{.}[rr] & & (5,2)\ar[rd]\ar@{.}[rr] & & (7,4)\ar@{.}[rr] \ar[rd]  &&(9,6)\ar@{.}[rr]   \ar[rd]&&(1,8)\\
					\ar@{.}[r] 
					& (1,6)\ar[rd]\ar[ru]\ar@{.}[rr] &  
					& (3,8)\ar[rd]\ar[ru]\ar@{.}[rr] &  
					& (5,10)\ar[rd]\ar[ru]\ar@{.}[rr] & 
					& (7,2)\ar[ru]\ar[rd]\ar@{.}[rr] & & (9,4)\ar[ru]\ar[rd]\ar@{.}[rr]  &  & (1,6) \ar@{.}[r] \ar[ru] &\\
					(1,4)\ar[ru]\ar@{.}[rr] & 
					& (3,6)\ar[ru]\ar@{.}[rr] &  
					& (5,8)\ar[ru]\ar@{.}[rr] & & (7,10)\ar[ru] \ar@{.}[rr]& &(9,2)\ar@{.}[rr]\ar[ru]&&(1,4)\ar[ru]\ar@{.}[rr] &&
			}}
			$$	
			Then $\mathcal{X}=\add(\{(1,8),(7,2),(3,6)\})$ is 2-rigid.
		\end{example}

		\begin{definition} Let $\mathcal{M}$ be a subcategory of $\mathscr{C}$. For any $X\in\mathscr{C}$,  a {\em $\mathcal{M}^{L}$-filtration of length} $t\geq0$ for $X$ means an $\mathbb{E}$-triangle sequence
			\begin{equation*}
				X\stackrel{}{\longrightarrow}X_{0}\stackrel{}{\longrightarrow}\cdots \stackrel{}{\longrightarrow}X_{t-1}\stackrel{}{\longrightarrow}X_{t}{\longrightarrow}0
			\end{equation*}
			with each $X_{i}\in \mathcal{M}$. Dually, a {\em $\mathcal{M}^{R}$-filtration of length} $t\geq0$ for $X$ means an $\mathbb{E}$-triangle sequence
			\begin{equation*}
				0{\longrightarrow}Y_{t}\stackrel{}{\longrightarrow}Y_{t-1}\stackrel{}{\longrightarrow}\cdots \stackrel{}{\longrightarrow}Y_{0}\stackrel{}{\longrightarrow} X
			\end{equation*}
			such that each $Y_{i}\in\mathcal{M}$.
		\end{definition}
		Let $\mathcal{M}^{t}$ (resp. $\mathcal{M}_{t}$) denote the subcategory of $\mathscr{C}$ whose objects have $\mathcal{M}^{L}$-filtration (resp. $\mathcal{M}^{R}$-filtration) of length $t$. We define two subcategories $\mathcal{M}^{L}$ and $\mathcal{M}^{R}$ as
		$$\mathcal{M}^{L}=\bigcup\limits_{t\geq1}\mathcal{M}^{t}~\text{and}~\mathcal{M}^{R}=\bigcup\limits_{t\geq1}\mathcal{M}_{t}.$$
		By extending the $\mathbb{E}$-triangle sequences by zeros, we have $$\mathcal{M}=\mathcal{M}^{0}\subseteq\mathcal{M}^{1}\subseteq\cdots\subseteq\mathcal{M}^{L}~\text{and}~\mathcal{M}=\mathcal{M}_{0}\subseteq\mathcal{M}_{1}\subseteq\cdots \subseteq \mathcal{M}_{R}.$$
		
		\begin{remark} Assume that $\mathscr{C}$ is a triangulated category with the suspension functor $\Sigma$. It is straightforward to check that
			$$ \mathcal{M}^{t}=\Sigma^{-t}\mathcal{M}\ast\cdots\ast\Sigma^{-1}\mathcal{M}\ast\mathcal{M}~\text{and}~\mathcal{M}_{t}=\mathcal{M}\ast\Sigma^{1}\mathcal{M}\ast\cdots\ast\Sigma^{t}\mathcal{M}.$$
			for any $t\geq0$. In particular, we have $\Sigma^{t}(\mathcal{M}^{t})=\mathcal{M}_{t}$.
		\end{remark}
		
		In what follows, we always assume that $\mathcal{M} $ is a $d$-rigid subcategory of $\mathscr{C}$.  The following observations  are useful.

		\begin{lemma}\label{L-A} Take $M\in\mathcal{M}$, $X\in\mathcal{M}^{t}$ and $Y\in\mathcal{M}_{t}$.
			
			$(1)$  If $d-t\geq1$, then $\mathbb{E}^{i}(X,M)=0$ for any $1\leq i\leq d-t$.
			
			$(2)$  If $d-t\geq1$, then $\mathbb{E}^{i}(M,Y)=0$ for any $1\leq i\leq d-t$.
			
			$(3)$ If $\mathcal{M}$ is $\infty$-rigid, then $\mathbb{E}^{i}(X,M)=0$ for any $i\geq1$.
			
			$(4)$ If $\mathcal{M}$ is $\infty$-rigid, then $\mathbb{E}^{i}(M,Y)=0$ for any $i\geq1$.
		\end{lemma}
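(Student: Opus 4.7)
The plan is to prove all four parts by a single induction on $t$, driven by the long exact sequences of Proposition~2.2 together with the recursive structure of the filtrations.

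First I would record the key recursion. If $X \in \mathcal{M}^{t}$ with $t \geq 1$, the definition of $\mathbb{E}$-triangle sequence applied to $X \to X_{0} \to X_{1} \to \cdots \to X_{t} \to 0$ yields, in particular, a first $\mathbb{E}$-triangle
$$X \longrightarrow X_{0} \longrightarrow K \dashrightarrow$$
with $X_{0} \in \mathcal{M}$, together with residual data $K \to X_{1} \to \cdots \to X_{t} \to 0$ exhibiting $K$ as an object of $\mathcal{M}^{t-1}$. Dually, for $Y \in \mathcal{M}_{t}$ with $t \geq 1$, the last $\mathbb{E}$-triangle of its $\mathcal{M}^{R}$-filtration has the form $Y' \to Y_{0} \to Y \dashrightarrow$ with $Y_{0} \in \mathcal{M}$ and $Y' \in \mathcal{M}_{t-1}$.

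For part~(1) I would induct on $t$. The base case $t = 0$ coincides with $\mathcal{M}^{0} = \mathcal{M}$, so the claim is just $d$-rigidity. For the inductive step with $t \geq 1$ and $1 \leq i \leq d - t$, apply $\Hom(-, M)$ to the triangle $X \to X_{0} \to K \dashrightarrow$; Proposition~2.2(1) supplies the exact fragment
$$\mathbb{E}^{i}(X_{0}, M) \longrightarrow \mathbb{E}^{i}(X, M) \longrightarrow \mathbb{E}^{i+1}(K, M).$$
The left term vanishes by $d$-rigidity, since $X_{0}, M \in \mathcal{M}$ and $i \leq d$; the right term vanishes by the inductive hypothesis applied to $K \in \mathcal{M}^{t-1}$, because the inequality $i + 1 \leq d - (t-1)$ is equivalent to $i \leq d - t$. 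Hence $\mathbb{E}^{i}(X, M) = 0$. Part~(2) is entirely dual: apply $\Hom(M, -)$ to $Y' \to Y_{0} \to Y \dashrightarrow$ and use Proposition~2.2(2) to extract
$$\mathbb{E}^{i}(M, Y_{0}) \longrightarrow \mathbb{E}^{i}(M, Y) \longrightarrow \mathbb{E}^{i+1}(M, Y'),$$
whose outer terms vanish in the same range by $d$-rigidity and induction respectively.

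Parts~(3) and~(4) follow by running the same inductions without the degree ceiling: when $\mathcal{M}$ is $\infty$-rigid, the input $\mathbb{E}^{i}(X_{0}, M) = 0$, respectively $\mathbb{E}^{i}(M, Y_{0}) = 0$, is available for every $i \geq 1$, so no constraint on $i$ survives at any stage. I do not anticipate a genuine obstacle; the only delicate point is the bookkeeping matching the degree shift $\mathbb{E}^{i} \rightsquigarrow \mathbb{E}^{i+1}$ against the length drop from $t$ to $t-1$, which is precisely what produces the sharp range $1 \leq i \leq d - t$ in parts~(1) and~(2).
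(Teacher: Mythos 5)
Your proof is correct and is essentially identical to the paper's: the same first $\mathbb{E}$-triangle $X\to X_{0}\to K\dashrightarrow$ with $K\in\mathcal{M}^{t-1}$, the same exact fragment $\mathbb{E}^{i}(X_{0},M)\to\mathbb{E}^{i}(X,M)\to\mathbb{E}^{i+1}(K,M)$, and the same induction on $t$ with the degree shift accounting for the range $1\leq i\leq d-t$. Parts (2)--(4) are handled dually in both, so there is nothing to add.
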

		\begin{proof} (1) The case of $t=0$ is trivial. For the general case, we take a $\mathcal{M}^{L}$-filtration
			\begin{equation*}
				X\stackrel{f_{0}}{\longrightarrow}X_{0}\stackrel{}{\longrightarrow}\cdots \stackrel{}{\longrightarrow}X_{t-1}\stackrel{}{\longrightarrow}X_{t}{\longrightarrow} 0.
			\end{equation*}
			Applying the functor $\Hom(-,M)$ to the $\mathbb{E}$-triangle  $X\stackrel{}{\longrightarrow}X_{0}\stackrel{}{\longrightarrow}\cone(f_{0})\stackrel{}\dashrightarrow$. Then we obtain an exact sequence
			\begin{equation}\label{Eq-1}
				\mathbb{E}^{i}(X_{0},M)\longrightarrow\mathbb{E}^{i}(X,M)\longrightarrow\mathbb{E}^{i+1}(\cone(f_{0}),M)
			\end{equation}
			for any $i\geq1$. Note that $\mathbb{E}^{i}(X_{0},M)=0$ for any $1\leq i\leq d$. Since $\cone(f_{0})\in\mathcal{M}^{t-1}$, by induction $\mathbb{E}^{i}(\cone(f_{0}),M)=0$ for any $1\leq i\leq d-t+1$. By the exactness of (\ref{Eq-1}), we conclude that  $\mathbb{E}^{i}(X,M)=0$ for any $1\leq i\leq d-t$.  The proofs of $(2)$--$(4)$ are similar.
		\end{proof}

		Let $\mathcal{T}$ be a subcategory of $\mathscr{C}$.  The {\em split Grothendieck group $K^{\rm sp}_{0}(\mathcal{T})$} is defined by the quotient group $G(\mathcal{T})/\lr{[A\oplus B]-[A]-[B]~|~A,B\in\mathcal{T}}$, where $G(\mathcal{T})$ is the free abelian group on isomorphism classes $[T]$ of objects $T\in\mathcal{T}$. The {\em  Grothendieck group $K_{0}(\mathcal{T})$} is defined as
		$$K_{0}(\mathcal{T}):=K_{0}^{\rm sp}(\mathcal{T})/\lr{[A]-[B]+[C]~|~A\rightarrow B\rightarrow C\dashrightarrow$$
			$$\text{is an $\mathbb{E}$-triangle with terms in $\mathcal{T}$}}.$$

\begin{lemma}\label{L-B}
			We take two $\mathcal{M}^{L}$-filtrations
			\begin{equation*}	(X\stackrel{}{\longrightarrow}X_{0}\stackrel{}{\longrightarrow}\cdots \stackrel{}{\longrightarrow}X_{t-1}\stackrel{}{\longrightarrow}X_{t}{\longrightarrow}0)\sim(K_{0},K_{1},\cdots,K_{t-1}\cong X_t)
			\end{equation*}
			\begin{equation*}	(X\stackrel{}{\longrightarrow}Y_{0}\stackrel{}{\longrightarrow}\cdots \stackrel{}{\longrightarrow}Y_{t-1}\stackrel{}{\longrightarrow}Y_{t}{\longrightarrow}0)\sim(P_{0},P_{1},\cdots,P_{t-1}\cong Y_t)
			\end{equation*}
			with $\mathbb{E}(K_{i},\mathcal{M})=\mathbb{E}(P_{i},\mathcal{M})=0$ for any $0\leq i\leq t-2$. Then	$$\sum\limits_{i=0}^{t}(-1)^{i}[X_{i}]=\sum\limits_{i=0}^{t}(-1)^{i}[Y_{i}]~\text{in}~K_{0}^{\rm sp}(\mathcal{M}).$$
		\end{lemma}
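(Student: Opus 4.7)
The plan is to induct on the length $t$. The base case $t = 0$ is immediate: the two filtrations collapse to $X \cong X_0 \cong Y_0$, so $[X_0] = [Y_0]$ in $K_{0}^{\rm sp}(\mathcal{M})$.

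For the inductive step $t \geq 1$, I would focus on the first $\mathbb{E}$-triangles in the two given filtrations, namely $X \to X_0 \to K_0 \dashrightarrow$ and $X \to Y_0 \to P_0 \dashrightarrow$, and apply the $3 \times 3$-lemma for extriangulated categories (a consequence of (ET4) and (ET4$^{\op}$); cf.\ \cite{Na}) to produce an object $Z$ together with $\mathbb{E}$-triangles $X_0 \to Z \to P_0 \dashrightarrow$ and $Y_0 \to Z \to K_0 \dashrightarrow$ fitting into a commutative $3 \times 3$-diagram containing the original two. When $t = 1$ the objects $K_0 \cong X_1$ and $P_0 \cong Y_1$ both lie in $\mathcal{M}$, so the $d$-rigidity of $\mathcal{M}$ (with $d \geq 1$) yields $\mathbb{E}(K_0, Y_0) = \mathbb{E}(P_0, X_0) = 0$; when $t \geq 2$, the hypothesis $\mathbb{E}(K_0, \mathcal{M}) = \mathbb{E}(P_0, \mathcal{M}) = 0$ furnishes the same vanishing. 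Either way, both triangles split, so $Y_0 \oplus K_0 \cong Z \cong X_0 \oplus P_0$.

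For $t = 1$ this splitting already gives $[X_0] + [Y_1] = [Y_0] + [X_1]$ in $K_{0}^{\rm sp}(\mathcal{M})$, which is precisely the claim. For $t \geq 2$, I would equip $Y_0 \oplus K_0$ with the $\mathcal{M}^L$-filtration of length $t-1$ built by direct-summing the tail filtration $K_0 \to X_1 \to \cdots \to X_t \to 0$ with the trivial filtration of $Y_0$, and treat $X_0 \oplus P_0$ symmetrically using the tail of the $Y$-filtration. A direct check shows that the intermediate kernels of these new filtrations are exactly $K_1, \ldots, K_{t-1}$ (resp.\ $P_1, \ldots, P_{t-1}$), so the hypothesis $\mathbb{E}(K_i, \mathcal{M}) = 0$ for $1 \leq i \leq t-2$ is exactly what the induction hypothesis at length $t-1$ demands. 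Applying the induction hypothesis to these two length-$(t-1)$ filtrations of the common object $Y_0 \oplus K_0 \cong X_0 \oplus P_0$ then yields the identity $[Y_0] + \sum_{i=1}^{t}(-1)^{i-1}[X_i] = [X_0] + \sum_{i=1}^{t}(-1)^{i-1}[Y_i]$, which rearranges at once to the desired equation $\sum_{i=0}^{t}(-1)^{i}[X_i] = \sum_{i=0}^{t}(-1)^{i}[Y_i]$.

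The hard part will be twofold: first, correctly invoking the $3 \times 3$-pushout construction in the extriangulated setting rather than a triangulated or exact one (so that both middle $\mathbb{E}$-triangles really are produced); and second, the bookkeeping needed to verify that the intermediate-kernel rigidity conditions on the original two filtrations propagate verbatim to the two new length-$(t-1)$ filtrations of $Z$, so that the induction hypothesis is legitimately applicable.
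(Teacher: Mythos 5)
Your proposal is correct and follows essentially the same route as the paper's proof: both form the pushout object $Q \cong Y_0\oplus K_0 \cong X_0\oplus P_0$ via the $3\times 3$ construction from \cite[Proposition 3.15]{Na}, split the two middle $\mathbb{E}$-triangles using the rigidity/hypothesis vanishing, and then apply the induction hypothesis to the two length-$(t-1)$ filtrations of this common object obtained by direct-summing the tails. The bookkeeping of intermediate kernels that you flag as the delicate point is handled in the paper exactly as you describe.
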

		\begin{proof} The case of $t=0$ is trivial. For the general case, we consider the following two $\mathbb{E}$-triangles $$X\stackrel{}{\longrightarrow}X_{0}\stackrel{}{\longrightarrow}K_{0}\stackrel{}\dashrightarrow~
			\text{and}~X\stackrel{}{\longrightarrow}Y_{0}\stackrel{}{\longrightarrow}P_{0}\stackrel{}\dashrightarrow.$$
			By \cite[Proposition 3.15]{Na}, there exists a commutative diagram of $\mathbb{E}$-triangles
			\begin{equation*}
				\xymatrix{
					X\ar[d]_{} \ar[r]^{} &X_{0} \ar[d]_{} \ar[r]^{} &K_{0} \ar@{=}[d]_{} \ar@{-->}[r]^{} &  \\
					Y_{0}\ar[d]_{} \ar[r]^{} &  Q\ar[d]_{} \ar[r]^{} &  K_{0}\ar@{-->}[r]^{0}&\\
					P_{0}\ar@{-->}[d]_{} \ar@{=}[r]^{} & P_{0} \ar@{-->}[d]^{0} &   &  \\
					&   &  &    }
			\end{equation*}
			By hypothesis, we have $Q\cong P_{0}\oplus X_{0}\cong K_{0}\oplus Y_{0}$. Then the $\mathbb{E}$-triangle $K_{0}\stackrel{}{\longrightarrow}X_{1}\stackrel{}{\longrightarrow}K_{1}\stackrel{}\dashrightarrow$ give rise to the following one
			$$
			K_{0}\oplus Y_{0}\stackrel{}{\longrightarrow}X_{1}\oplus Y_{0}\stackrel{}{\longrightarrow}K_{1}\stackrel{}\dashrightarrow.$$
			Therefore, we have a $\mathcal{M}^{L}$-filtration of length $t-1$ for $K_{0}\oplus Y_{0}$ as 
			\begin{equation*}
				K_{0}\oplus Y_{0}\stackrel{}{\longrightarrow}X_{1}\oplus Y_{0}\stackrel{}{\longrightarrow} X_{2}\stackrel{}{\longrightarrow} \cdots \stackrel{}{\longrightarrow}X_{t-1}\stackrel{}{\longrightarrow}X_{t}{\longrightarrow}0.
			\end{equation*}
			Similarly, we have  a $\mathcal{M}^{L}$-filtration of length $t-1$ for $P_{0}\oplus X_{0}$ as follows
			\begin{equation*}
				P_{0}\oplus X_{0}\stackrel{}{\longrightarrow}Y_{1}\oplus X_{0}\stackrel{}{\longrightarrow} Y_{2}\stackrel{}{\longrightarrow} \cdots \stackrel{}{\longrightarrow}Y_{t-1}\stackrel{}{\longrightarrow}Y_{t}{\longrightarrow}0.
			\end{equation*}
			By induction hypothesis, we have
			$$[X_{1}\oplus Y_{0}]+\Sigma_{i=2}^{t} (-1)^{i+1}[X_{i}]=[Y_{1}\oplus X_{0}]+\Sigma_{i=2}^{t} (-1)^{i+1}[Y_{i}]$$
			and hence $\sum\limits_{i=0}^{t}(-1)^{i}[X_{i}]=\sum\limits_{i=0}^{t}(-1)^{i}[Y_{i}]~\text{in}~K_{0}^{\rm sp}(\mathcal{M}).$
		\end{proof}
		
		Then we introduce the following definition.

		\begin{definition}\label{D-1}  For $X\in\mathcal{M}^{t}$, we take a $\mathcal{M}^{L}$-filtration
			\begin{equation*}
				X\stackrel{}{\longrightarrow}X_{0}\stackrel{}{\longrightarrow}\cdots \stackrel{}{\longrightarrow}X_{t-1}\stackrel{}{\longrightarrow}X_{t}{\longrightarrow}0
			\end{equation*}
			The {\em left index} of $X$ with respect to $\mathcal{M}$ is the element
			$$\ind^{ L}_{\mathcal{M}}(X)=\sum\limits_{i=0}^{t}(-1)^{i}[X_{i}]\in K_{0}^{\rm sp}(\mathcal{M}).$$
			Dually, if $X\in\mathcal{M}_{t}$, there is a $\mathcal{M}^{R}$-filtration
			\begin{equation*}
				0{\longrightarrow}Y_{t}\stackrel{}{\longrightarrow}Y_{t-1}\stackrel{}{\longrightarrow}\cdots \stackrel{}{\longrightarrow}Y_{0}\stackrel{}{\longrightarrow} X
			\end{equation*}
			The {\em right index} of $X$ with respect to $\mathcal{M}$ is the element
			$$\ind^{R}_{\mathcal{M}}(X)=\sum\limits_{i=0}^{t}(-1)^{i}[Y_{i}]\in K_{0}^{\rm sp}(\mathcal{M}).$$
		\end{definition}

		\begin{remark}\label{R-2-6} Suppose that $t\leq d$. Consider the following two $\mathbb{E}$-triangle sequences
			$$(X\stackrel{}{\longrightarrow}X_{0}\stackrel{}{\longrightarrow}\cdots \stackrel{}{\longrightarrow}X_{t-1}\stackrel{}{\longrightarrow}X_{t}{\longrightarrow}0)\sim(K_{0},K_{1},\cdots,K_{t-1}),$$
			$$(0{\longrightarrow}Y_{t}\stackrel{}{\longrightarrow}Y_{t-1}\stackrel{}{\longrightarrow}\cdots \stackrel{}{\longrightarrow}Y_{0}\stackrel{}{\longrightarrow} X)\sim(P_{t-1},\cdots,P_{1},P_{0}).$$
			Then Lemma \ref{L-A} implies that $\mathbb{E}(K_{i},\mathcal{M})=\mathbb{E}(\mathcal{M},P_{i})=0$ for any $1\leq i\leq t-1$. By using Lemma \ref{L-B} and its dual version, we infer that the elements $\ind^{ L}_{\mathcal{M}}(X)=\Sigma_{i=0}^{t}(-1)^{i}[X_{i}]$  and $\ind^{R}_{\mathcal{M}}(X)=\Sigma_{i=0}^{t}(-1)^{i}[Y_{i}]$ does not depend on the choice of $\mathbb{E}$-triangle sequence.  Hence, we can define two homomorphisms as follows:
			\begin{equation*}
				{\rm ind}^{\rm L}_{\mathcal{M}}:K_{0}^{\rm sp}(\mathcal{M}^{t})\longrightarrow K_{0}^{\rm sp}(\mathcal{M})~\text{and}~{\rm ind}^{\rm R}_{\mathcal{M}}:K_{0}^{\rm sp}(\mathcal{M}_{t})\longrightarrow K_{0}^{\rm sp}(\mathcal{M}).
			\end{equation*}
		If moreover $\mathcal{M}$ is $\infty$-rigid, then we have homomorphisms
			\begin{equation*}
				{\rm ind}^{\rm L}_{\mathcal{M}}:K_{0}^{\rm sp}(\mathcal{M}^{L})\longrightarrow K_{0}^{\rm sp}(\mathcal{M})~\text{and}~{\rm ind}^{\rm R}_{\mathcal{M}}:K_{0}^{\rm sp}(\mathcal{M}^{R})\longrightarrow K_{0}^{\rm sp}(\mathcal{M}).
			\end{equation*}
		\end{remark}
		
		We have the following useful characterization of indices.

		\begin{proposition}\label{L-7} Let $A\stackrel{}{\longrightarrow}B\stackrel{}{\longrightarrow}C\stackrel{}\dashrightarrow$ be an $\mathbb{E}$-triangle in $\mathscr{C}$. 
			
			$(1)$ Suppose that $A,C\in \mathcal{M}^{t}$ with $t\leq d$. If $t< d$ or  $\mathbb{E}(C,\mathcal{M})=0$, then $B\in\mathcal{M}^{t}$ and 
			\begin{equation*}
				\ind^{ L}_{\mathcal{M}}(B)=\ind^{ L}_{\mathcal{M}}(A)+\ind^{ L}_{\mathcal{M}}(C).
			\end{equation*}
			
			$(2)$ Suppose that  $A,C\in \mathcal{M}_{t}$ with $t\leq d$. If $t<d$ or  $\mathbb{E}(\mathcal{M},A)=0$, then $B\in\mathcal{M}_{t}$ and 
			\begin{equation*}
				\ind^{R}_{\mathcal{M}}(B)=\ind^{ R}_{\mathcal{M}}(A)+\ind^{ R}_{\mathcal{M}}(C).
			\end{equation*}
			
		\end{proposition}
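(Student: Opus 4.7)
The plan is to prove (1) by induction on $t$; part (2) follows by the dual argument. For the base case $t=0$ we have $A,C\in\mathcal{M}$, and since $d\ge 1$ the $d$-rigidity of $\mathcal{M}$ gives $\mathbb{E}(C,A)=0$, so the $\mathbb{E}$-triangle splits and $B\cong A\oplus C\in\mathcal{M}$ with $\ind^{L}_{\mathcal{M}}(B)=[A]+[C]=\ind^{L}_{\mathcal{M}}(A)+\ind^{L}_{\mathcal{M}}(C)$.

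For the inductive step, pick the first stages of $\mathcal{M}^{L}$-filtrations of $A$ and $C$, namely $\mathbb{E}$-triangles $A\to X_{0}\to K_{A}\dashrightarrow$ and $C\to Y_{0}\to K_{C}\dashrightarrow$ with $X_{0},Y_{0}\in\mathcal{M}$ and $K_{A},K_{C}\in\mathcal{M}^{t-1}$. First apply the $3\times 3$-construction (Proposition 3.15 of Nakaoka--Palu) to the given triangle along $A\to X_{0}$, obtaining $\mathbb{E}$-triangles $X_{0}\to B_{1}\to C\dashrightarrow$ and $B\to B_{1}\to K_{A}\dashrightarrow$. Apply it once more to $X_{0}\to B_{1}\to C\dashrightarrow$ along $C\to Y_{0}$ to produce $X_{0}\to B_{2}\to Y_{0}\dashrightarrow$ and $B_{1}\to B_{2}\to K_{C}\dashrightarrow$. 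Since $X_{0},Y_{0}\in\mathcal{M}$ and $d\ge 1$, $d$-rigidity yields $\mathbb{E}(Y_{0},X_{0})=0$, so the first of these two triangles splits and $B_{2}\cong X_{0}\oplus Y_{0}\in\mathcal{M}$.

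Combining $B\to B_{1}\to K_{A}\dashrightarrow$ with $B_{1}\to B_{2}\to K_{C}\dashrightarrow$ via (ET4) then yields an $\mathbb{E}$-triangle $B\to B_{2}\to K\dashrightarrow$ together with $K_{A}\to K\to K_{C}\dashrightarrow$. The induction hypothesis applies to the latter triangle: since $K_{A},K_{C}\in\mathcal{M}^{t-1}$ and $t-1\le d-1<d$, the first alternative in the hypothesis is automatic, and we conclude $K\in\mathcal{M}^{t-1}$ with $\ind^{L}_{\mathcal{M}}(K)=\ind^{L}_{\mathcal{M}}(K_{A})+\ind^{L}_{\mathcal{M}}(K_{C})$. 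Concatenating a filtration of $K$ onto $B\to B_{2}\to K\dashrightarrow$ gives an $\mathcal{M}^{L}$-filtration of $B$ of length $t$, so $B\in\mathcal{M}^{t}$; since $t\le d$, Remark \ref{R-2-6} makes $\ind^{L}_{\mathcal{M}}(B)$ well-defined, and
\[\ind^{L}_{\mathcal{M}}(B)=[B_{2}]-\ind^{L}_{\mathcal{M}}(K)=\bigl([X_{0}]-\ind^{L}_{\mathcal{M}}(K_{A})\bigr)+\bigl([Y_{0}]-\ind^{L}_{\mathcal{M}}(K_{C})\bigr)=\ind^{L}_{\mathcal{M}}(A)+\ind^{L}_{\mathcal{M}}(C).\]

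The hard part will be the bookkeeping of the repeated applications of the $3\times 3$-construction and (ET4) in the extriangulated setting and, at the boundary case $t=d$, using the supplementary hypothesis $\mathbb{E}(C,\mathcal{M})=0$ to supply the vanishing that Lemma \ref{L-A} no longer provides for free, in order to guarantee that the filtration of $B$ obtained is genuinely of length $t$. Once these set-up issues are handled, the index identity is a formal consequence of additivity of $[-]$ on direct sums in $K_{0}^{\rm sp}(\mathcal{M})$ together with the inductive identity for $K$.
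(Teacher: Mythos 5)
Your argument is essentially the paper's proof: the paper also pushes the triangle out along $A\to X_0$ and along $C\to Y_0$ to get the split middle row $X_0\to X_0\oplus Y_0\to Y_0$ (using $\mathbb{E}(Y_0,X_0)=0$), obtains the bottom triangle $K_A\to K\to K_C$, and then iterates down the filtrations — your induction on $t$ just packages that iteration differently. One correction: the second "$3\times3$" step is not an instance of Nakaoka--Palu Proposition 3.15 (which concerns two $\mathbb{E}$-triangles sharing their \emph{first} term); extending $X_0\to B_1\to C$ along the inflation $C\to Y_0$ requires lifting the class $\delta_1\in\mathbb{E}(C,X_0)$ through $c^{*}:\mathbb{E}(Y_0,X_0)\to\mathbb{E}(C,X_0)$, which by the long exact sequence needs $\mathbb{E}^{2}(K_C,X_0)=0$ (Lemma \ref{L-A}(1), available only when $t<d$) or directly $\mathbb{E}(C,X_0)=0$ when $t=d$. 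So the vanishing supplied by Lemma \ref{L-A} or by the hypothesis $\mathbb{E}(C,\mathcal{M})=0$ is needed to \emph{construct} $B_2$ and the triangle $B_1\to B_2\to K_C$ at all — not merely, as you suggest at the end, to control the length of the resulting filtration; this is exactly why the paper invokes Lemma \ref{L-A}(1) together with the $3\times3$ lemmas of \cite{Hu} rather than Proposition 3.15 alone. With that tool substituted, your induction and the index computation go through as written.
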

		\begin{proof} 	Suppose that		\begin{equation*}
				(A\stackrel{}{\longrightarrow}X_{0}\stackrel{}{\longrightarrow}\cdots \stackrel{}{\longrightarrow}X_{t-1}\stackrel{}{\longrightarrow}X_{t}{\longrightarrow}0)\sim(K_{0},K_{1},\cdots,K_{t-1}\cong X_t),
			\end{equation*}
			\begin{equation*}
				(C\stackrel{}{\longrightarrow}Y_{0}\stackrel{}{\longrightarrow}\cdots \stackrel{}{\longrightarrow}Y_{t-1}\stackrel{}{\longrightarrow}Y_{t}{\longrightarrow}0)\sim (P_{0},P_{1},\cdots,P_{t-1}\cong Y_t).
			\end{equation*}
			We consider the following two cases:
			
			$(\mathbf{Case~1})$ $t<d.$ We claim that  $B$ has a $\mathcal{M}^{L}$-filtration of length $t$ as follows:
			\begin{equation}\label{E-3}
				B\stackrel{}{\longrightarrow}X_{0}\oplus Y_{0}\stackrel{}{\longrightarrow}\cdots \stackrel{}{\longrightarrow}X_{t-1}\oplus Y_{t-1}\stackrel{}{\longrightarrow}X_{t}\oplus Y_{t}{\longrightarrow}0.
			\end{equation}
			Using Lemma \ref{L-A}(1) and the dual  of \cite[Lemma 4.14]{Hu}, there exists a commutative diagram of $\mathbb{E}$-triangles
			\begin{equation}\label{G-1}
				\xymatrix{
					A\ar[r]^{} \ar[d]_{}& B \ar[r]^{}\ar[d] &  C\ar@{-->}[r]^{} \ar[d]_{} &  \\
					X_{0} \ar[r]^{} \ar[d]&  X_{0}\oplus Y_{0} \ar[r]^{}\ar[d] &  Y_{0}\ar@{-->}[r]^{0}\ar[d] &  \\
					K_{0}\ar[r]^{}\ar@{-->}[d]^{} &  Q_{0} \ar[r]^{} \ar@{-->}[d]^{}& P_{0}\ar@{-->}[r]^{}\ar@{-->}[d]^{}&\\
					&  &  & . }
			\end{equation}
			Now we repeat the argument to the $\mathbb{E}$-triangle  $K_{0}\stackrel{}{\longrightarrow}Q_{0}\stackrel{}{\longrightarrow}P_{0}\stackrel{}\dashrightarrow$.  Then we obtain an $\mathbb{E}$-triangle sequence
			\begin{equation*}
				B\stackrel{}{\longrightarrow}X_{0}\oplus Y_{0}\stackrel{}{\longrightarrow}X_{1}\oplus Y_{1}\rightarrow Q_{1}
			\end{equation*}
			such that $Q_{0}\stackrel{}{\longrightarrow}X_{1}\oplus Y_{1}\stackrel{}{\longrightarrow}Q_{1}\stackrel{}\dashrightarrow$ and $K_{1}\stackrel{}{\longrightarrow} Q_{1}{\longrightarrow}P_{1}\stackrel{}\dashrightarrow$ are $\mathbb{E}$-triangles. By repeating this process, we have an $\mathbb{E}$-triangle sequence
			\begin{equation*}
				B\stackrel{}{\longrightarrow}X_{0}\oplus Y_{0}\stackrel{}{\longrightarrow}\cdots \stackrel{}{\longrightarrow}X_{t-1}\oplus Y_{t-1}\stackrel{}{\longrightarrow}Q_{t-1}
			\end{equation*}
			such that $Q_{t-2}\stackrel{}{\longrightarrow}X_{t-1}\oplus Y_{t-1}\stackrel{}{\longrightarrow}Q_{t-1}\stackrel{}\dashrightarrow$ and $K_{t-1}\stackrel{}{\longrightarrow} Q_{t-1}{\longrightarrow}P_{t-1}\stackrel{}\dashrightarrow$ are $\mathbb{E}$-triangles. Since $\mathbb{E}(P_{t-1},K_{t-1})=\mathbb{E}(Y_{t},X_{t})=0$, we infer that $Q_{t-1}\cong X_{t}\oplus Y_{t}\in \mathcal{M}$.
			
			$(\mathbf{Case~2})$  $\mathbb{E}(C,\mathcal{M})=0$. By hypothesis, there exists a  commutative diagram of $\mathbb{E}$-triangles as (\ref{G-1}). Note that
			$$K_{0}\stackrel{}{\longrightarrow}Q_{0}\stackrel{}{\longrightarrow}P_{0}\stackrel{}\dashrightarrow$$
			is an $\mathbb{E}$-triangle with $K_{0},P_{0}\in\mathcal{M}^{t-1}$. By (Case 1), there exists an $\mathbb{E}$-triangle sequence
			\begin{equation}\label{E-33}
				Q_{0}\stackrel{}{\longrightarrow}X_{1}\oplus Y_{1}\stackrel{}{\longrightarrow}\cdots \stackrel{}{\longrightarrow}X_{t-1}\oplus Y_{t-1}\stackrel{}{\longrightarrow}X_{t}\oplus Y_{t}{\longrightarrow}0.
			\end{equation}
			Combining (\ref{E-33}) with the $\mathbb{E}$-triangle $B\stackrel{}{\longrightarrow}X_{0}\oplus Y_{0}\stackrel{}{\longrightarrow}Q_{0}\stackrel{}\dashrightarrow$, we obtain an $\mathcal{M}^{L}$-filtration as (\ref{E-3}).
			
			This finishes the proof of (1). The proof of (2) is similar.
		\end{proof}
		
		\begin{remark}\label{R-3-9} (1) If $t<d$, then Proposition \ref{L-7}(1) implies that $\mathcal{M}^{t}$ is closed under extensions.  However, $\mathcal{M}^{d}$ is not  necessarily closed under extensions. For example, we take $\mathcal{M}=\add(\{(3,10),(9,4)\})$ in $\mathcal{C}_{A_3}^{2}$. In this case, $\mathcal{M}$ is closed under direct summands. As stated in Example \ref{E-1}(3), the subcategory $\mathcal{M}$ is $2$-rigid. 	Observe that  there is a triangle
			$$(9,2)\longrightarrow (9,6)\longrightarrow (1,6)\longrightarrow \Sigma (9,2).$$
			with 	 $(9,2)\in (5,2)*(9,4)\in\Sigma^{-2}\mathcal{M}\ast \mathcal{M}\subseteq\mathcal{M}^{2}$ and $(1,6)\in\Sigma^{-2}\mathcal{M}\subseteq\mathcal{M}^{2}$. 
			We claim that $(9,6)\notin \mathcal{M}^{2}$. Otherwise, there exists two triangles
			$$(9,6)\longrightarrow M_0\longrightarrow X\longrightarrow \Sigma (9,6)~\text{and}~X\longrightarrow M_1\longrightarrow M_2\longrightarrow \Sigma X$$
			such that each $M_i\in\mathcal{M}$. Since $\Hom((9,6),\mathcal{M})=0$, we get $X\cong M_0\oplus (5,8)$. We can check that $\Ext^{1}(\mathcal{M},X)=0$ and hence  $(5,8)\in\mathcal{M}$. This  is a contradiction.
			
			$(2)$ Suppose that $t<d$. By using Remark \ref{R-2-6} and  Lemma \ref{L-7}, we consider the following homomorphisms 	
			\begin{equation*}
				{\rm ind}^{\rm L}_{\mathcal{M}}:K_{0}(\mathcal{M}^{t})\longrightarrow K_{0}^{\rm sp}(\mathcal{M})~\text{and}~{\rm ind}^{\rm R}_{\mathcal{M}}:K_{0}^{\rm sp}(\mathcal{M}_{t})\longrightarrow K_{0}^{\rm sp}(\mathcal{M}).
			\end{equation*}
			If $\mathcal{M}$ is $\infty$-rigid, then we have
			\begin{equation*}
				{\rm ind}^{\rm L}_{\mathcal{M}}:K_{0}(\mathcal{M}^{L})\longrightarrow K_{0}^{\rm sp}(\mathcal{M})~\text{and}~{\rm ind}^{\rm R}_{\mathcal{M}}:K_{0}^{\rm sp}(\mathcal{M}_{t})\longrightarrow K_{0}^{\rm sp}(\mathcal{M}).
			\end{equation*}
		\end{remark}

		Let $\mathcal{T}$ be an additive category.  A {\em right $\mathcal{T}$-module} is a  contravariant linear functor of the form $F:\mathcal{T}\longrightarrow \Mod \mathbb{Z}$. The category $\Mod \mathcal{T}$ of right $\mathcal{T}$-modules is an abelian category. By Yoneda’s lemma, representable functors are projective objects in  $\Mod \mathcal{T}$. A right $\mathcal{T}$-module $F$ is called {\em coherent} if there exists an exact sequence
		$$(-,T_{1})\longrightarrow(-,T_{0})\longrightarrow F\longrightarrow0$$
		such that each $T_{i}\in\mathcal{T}.$ We denote by $\mod \mathcal{T}$ the subcategory of  $\Mod \mathcal{T}$ whose objects  are the coherent right $\mathcal{T}$-modules. Note that $\mod \mathcal{M}$ is closed under extensions and cokernels in  $\Mod \mathcal{T}$. Dually, one can define categories $\Mod \mathcal{T}^{\rm op}$ and  $\mod \mathcal{T}^{\rm op}$. 
		Consider the following two homological functors 
		$$F_{\mathcal{M}}:\mathscr{C}\longrightarrow \Mod\mathcal{M};~X\mapsto\Hom(-,X)|_{\mathcal{M}},$$
		$$G_{\mathcal{M}}:\mathscr{C}\longrightarrow \Mod \mathcal{M}^{\rm op};~X\mapsto\Hom(X,-)|_{\mathcal{M}}.$$

		\begin{lemma}\label{L-3.10}Take $X\in\mathscr{C}$.
			
			$(1)$ If  $X\in \mathcal{M}_{d}$, then $F_{\mathcal{M}}(X)\in \mod\mathcal{M}$.		
			
			$(2)$ If  $X\in \mathcal{M}^{d}$, then $G_{\mathcal{M}}(X)\in \mod\mathcal{M}^{\rm op} $.
			
			$(3)$ If $X\in\mathcal{M}^{1}$, then $\mathbb{E}(-,X)|_{\mathcal{M}}\in\mod\mathcal{M}$.
			
			$(4)$ If $X\in\mathcal{M}_{1}$, then $\mathbb{E}(X,-)|_{\mathcal{M}}\in\mod\mathcal{M}^{\rm op}$.

		\end{lemma}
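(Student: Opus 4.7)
The four statements split into two dual pairs, so I will sketch (1) and (3) in detail; (2) and (4) follow from the dual arguments after exchanging variances.

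For (3), the hypothesis $X \in \mathcal{M}^1$ yields an $\mathbb{E}$-triangle $X \to X_0 \to X_1 \dashrightarrow$ with $X_0, X_1 \in \mathcal{M}$. Applying $\Hom(M,-)$ for $M \in \mathcal{M}$ and invoking the long exact sequence recalled in the preliminaries, the $d$-rigidity of $\mathcal{M}$ forces $\mathbb{E}(M, X_0) = 0$, so the sequence
$$\Hom(M, X_0) \longrightarrow \Hom(M, X_1) \longrightarrow \mathbb{E}(M, X) \longrightarrow 0$$
is exact. By naturality in $M$ this assembles into a coherent presentation $(-, X_0) \to (-, X_1) \to \mathbb{E}(-, X)|_{\mathcal{M}} \to 0$ in $\Mod \mathcal{M}$. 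Statement (4) is the same computation with $\Hom(-, M)$ in place of $\Hom(M, -)$.

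For (1), the plan is to extract a two-term presentation of $F_{\mathcal{M}}(X)$ directly from the $\mathcal{M}^R$-filtration $0 \to Y_d \to \cdots \to Y_0 \to X$. Let $K \to Y_0 \to X \dashrightarrow$ be its last $\mathbb{E}$-triangle, and (when $d \geq 2$) $K' \to Y_1 \to K \dashrightarrow$ the preceding one; peeling off the filtration one step at a time shows $K \in \mathcal{M}_{d-1}$ and $K' \in \mathcal{M}_{d-2}$, whence by Lemma~\ref{L-A}(2) we obtain $\mathbb{E}(M, K) = 0$ and $\mathbb{E}(M, K') = 0$ for every $M \in \mathcal{M}$. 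The long exact sequence of the first triangle then gives that $\Hom(M, Y_0) \to \Hom(M, X)$ is surjective with kernel equal to the image of $\Hom(M, K) \to \Hom(M, Y_0)$, while the long exact sequence of the second triangle gives that $\Hom(M, Y_1) \to \Hom(M, K)$ is surjective. Consequently the composite $Y_1 \to K \to Y_0$ induces a map $\Hom(M, Y_1) \to \Hom(M, Y_0)$ whose image coincides with that kernel, so $(-, Y_1) \to (-, Y_0) \to F_{\mathcal{M}}(X) \to 0$ is exact in $\Mod \mathcal{M}$. The edge case $d = 1$ is immediate since $K \cong Y_1 \in \mathcal{M}$.

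The proof of (2) is dual, applying the same strategy to an $\mathcal{M}^L$-filtration of $X \in \mathcal{M}^d$ and using Lemma~\ref{L-A}(1). I do not anticipate any genuine obstacle: the whole role of the $d$-rigidity assumption is to furnish exactly two steps of $\mathbb{E}$-vanishing along the filtration, which is precisely what a two-term presentation of a coherent functor demands. The only delicate point is the indexing bookkeeping -- one must match the depth into the filtration against the rigidity level $d$ so that the two invocations of Lemma~\ref{L-A} both apply.
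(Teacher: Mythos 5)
Your proposal is correct. For parts (3) and (4) it coincides with the paper's argument: the presentation $(-,X_0)|_{\mathcal{M}}\to(-,X_1)|_{\mathcal{M}}\to\mathbb{E}(-,X)|_{\mathcal{M}}\to 0$ obtained from the long exact sequence and the vanishing $\mathbb{E}(\mathcal{M},X_0)=0$ is exactly what the paper writes. For parts (1) and (2) you take a genuinely different, non-inductive route. The paper applies $F_{\mathcal{M}}$ to the single $\mathbb{E}$-triangle $P_0\to Y_0\to X\dashrightarrow$, gets $F_{\mathcal{M}}(P_0)\to F_{\mathcal{M}}(Y_0)\to F_{\mathcal{M}}(X)\to 0$ exact (using $\mathbb{E}(-,P_0)|_{\mathcal{M}}=0$ since $P_0\in\mathcal{M}_{d-1}$), concludes $F_{\mathcal{M}}(P_0)\in\mod\mathcal{M}$ by induction on the filtration length, and then invokes closure of $\mod\mathcal{M}$ under cokernels. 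You instead peel off two steps of the filtration and use Lemma \ref{L-A}(2) twice, for $P_0\in\mathcal{M}_{d-1}$ and $P_1\in\mathcal{M}_{d-2}$, to get surjectivity of $\Hom(M,Y_1)\to\Hom(M,P_0)$ and the identification of $\ker(\Hom(M,Y_0)\to\Hom(M,X))$ with the image of $\Hom(M,P_0)$; splicing these yields the explicit two-term presentation $(-,Y_1)|_{\mathcal{M}}\to(-,Y_0)|_{\mathcal{M}}\to F_{\mathcal{M}}(X)\to 0$ directly from the definition of coherence, with no induction and no appeal to cokernel-closure of $\mod\mathcal{M}$. Both arguments rest on the same vanishing supplied by Lemma \ref{L-A}; yours is more self-contained and records which representables present $F_{\mathcal{M}}(X)$, while the paper's is shorter to write and uniform in the filtration length. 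Your handling of the edge case $d=1$ and the dual statements is also fine.
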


		\begin{proof}
			(1) Take a $\mathcal{M}^{R}$-filtration
			$$(0{\longrightarrow}M_{d}\stackrel{}{\longrightarrow}M_{d-1}\stackrel{}{\longrightarrow}\cdots \stackrel{}{\longrightarrow}M_{0}\stackrel{}{\longrightarrow} X)\sim(P_{d-1},\cdots,P_{1},P_{0}).$$
			The case of $d=0$ is trivial.  Applying the functor $F_{\mathcal{M}}$ to the $\mathbb{E}$-triangle $P_{0}\stackrel{}{\longrightarrow} M_0{\longrightarrow}X\stackrel{}\dashrightarrow$, we obtain an exact sequence
			$$F_{\mathcal{M}}(P_0)\longrightarrow F_{\mathcal{M}}(M_0)\longrightarrow F_{\mathcal{M}}(X)\longrightarrow \mathbb{E}(-,P_0)|_{\mathcal{M}}=0.$$
			Since $P_0\in\mathcal{M}_{d-1}$, by induction $F_{\mathcal{M}}(P_0)\in \mod\mathcal{M}$. Recall that $\mod\mathcal{M}$ is closed under cokernels. It follows that $F_{\mathcal{M}}(X)\in \mod\mathcal{M}$. The proof of (2) is similar.
			
			$(3)$ By hypothesis, there exists an $\mathbb{E}$-triangle
			$X{\longrightarrow}M_0{\longrightarrow}M_1\stackrel{}\dashrightarrow$ such that each $M_i\in\mathcal{M}$. Applying the functor $F_{\mathcal{M}}$ to these  $\mathbb{E}$-triangle, we have an exact sequence
			$$F_{\mathcal{M}}(M_0)\longrightarrow F_{\mathcal{M}}(M_1)\longrightarrow \mathbb{E}(-,X)|_{\mathcal{M}}\longrightarrow \mathbb{E}(-,M_0)|_{\mathcal{M}}=0.$$  
			It means that   $\mathbb{E}(-,X)|_{\mathcal{M}}\in\mod\mathcal{M}$. The proof of (4) is similar.
		\end{proof}

		\begin{definition}  By Lemma \ref{L-3.10},  we have two functors:
			$$\mathbb{E}(-,?)|_{\mathcal{M}}:\mathcal{M}_{d}\cap\mathcal{M}^{1}\longrightarrow\mod \mathcal{M};~X\mapsto \mathbb{E}(-,X)|_{\mathcal{M}},$$
			$$\mathbb{E}(?,-)|_{\mathcal{M}}:\mathcal{M}^{d}\cap\mathcal{M}_{1}\longrightarrow\mod \mathcal{M}^{\rm op};~X\mapsto \mathbb{E}(X,-)|_{\mathcal{M}}.$$
			Then we define
			$$\Phi:\im(\mathbb{E}(-,?)|_{\mathcal{M}})\longrightarrow K_{0}^{\rm sp}(\mathcal{M});~\mathbb{E}(-,X)|_{\mathcal{M}}\mapsto \ind^{R}_{\mathcal{M}}(X)-\ind^{L}_{\mathcal{M}}(X),$$
			$$\Psi:\im(\mathbb{E}(?,-)|_{\mathcal{M}})\longrightarrow K_{0}^{\rm sp}(\mathcal{M});~\mathbb{E}(X,-)|_{\mathcal{M}}\mapsto \ind^{L}_{\mathcal{M}}(X)-\ind^{R}_{\mathcal{M}}(X).$$
		\end{definition}
		
		 We have the following characterizations of indices on $\mathbb{E}$-triangles.
		\begin{proposition}\label{L-3-11} Let $A\stackrel{f}{\longrightarrow}B\stackrel{g}{\longrightarrow}C\stackrel{}\dashrightarrow$ be an $\mathbb{E}$-triangle in $\mathscr{C}$.
			
			$(1)$ If $A,B,C\in \mathcal{M}^{d}$, then $\coker(G_{\mathcal{M}}(f))\in\im(\mathbb{E}(?,-)|_{\mathcal{M}})$ and
			$$\ind^{L}_{\mathcal{M}}(A)-\ind^{ L}_{\mathcal{M}}(B)+\ind^{ L}_{\mathcal{M}}(C)=\Psi(\coker(G_{\mathcal{M}}(f))).
			$$

			$(2)$ If $A,B,C\in \mathcal{M}_{d}$, then $\coker(F_{\mathcal{M}}(g))\in\im(\mathbb{E}(-,?)|_{\mathcal{M}})$ and
			$$\ind^{R}_{\mathcal{M}}(A)-\ind^{ R}_{\mathcal{M}}(B)+\ind^{ R}_{\mathcal{M}}(C)=\Phi(\coker(F_{\mathcal{M}}(g))).$$
		\end{proposition}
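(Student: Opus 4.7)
The plan is to prove part (1) by induction on $d$, using the $3\times 3$ lemma (Proposition 3.15 of \cite{Na}) together with Proposition \ref{L-7}(1) to reduce the problem at depth $d$ to one at depth $d-1$; part (2) then follows by a dual argument in the opposite extriangulated category.

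For the inductive step, fix $\mathcal{M}^L$-filtrations of $A$ and $C$ with first intermediate objects $K_0^A, K_0^C \in \mathcal{M}^{d-1}$. Two applications of Proposition 3.15 of \cite{Na} --- first to $A\to B\to C$ and $A\to A_0\to K_0^A$, then to the resulting middle row $A_0\to D\to C$ and the filtration $C\to C_0\to K_0^C$ --- followed by an ET4 composition of the middle columns, produce a commutative $3\times 3$ diagram whose middle row $A_0\to A_0\oplus C_0\to C_0$ is split in $\mathcal{M}$ by $d$-rigidity, whose top row $B\to A_0\oplus C_0\to K_0^B\dashrightarrow$ is an $\mathbb{E}$-triangle, and whose bottom row $K_0^A\to K_0^B\to K_0^C\dashrightarrow$ is an $\mathbb{E}$-triangle; applying Proposition \ref{L-7}(1) to this bottom row (with $t=d-1<d$) gives $K_0^B\in\mathcal{M}^{d-1}$. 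In particular the top row is the first step of an $\mathcal{M}^L$-filtration of $B$, and the induction hypothesis applies to the bottom row.

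The IH yields $X'\in\mathcal{M}^{d-1}\cap\mathcal{M}_1$ with $\mathbb{E}(X',-)|_{\mathcal{M}}\cong\coker(G_{\mathcal{M}}(K_0^A\to K_0^B))$ and $\ind^L_{\mathcal{M}}(K_0^A)-\ind^L_{\mathcal{M}}(K_0^B)+\ind^L_{\mathcal{M}}(K_0^C)=\ind^L_{\mathcal{M}}(X')-\ind^R_{\mathcal{M}}(X')$. The identity $\ind^L_{\mathcal{M}}(Z)=[Z_0]-\ind^L_{\mathcal{M}}(K_0^Z)$ (immediate from Definition \ref{D-1}) for $Z\in\{A,B,C\}$ then gives $\ind^L_{\mathcal{M}}(A)-\ind^L_{\mathcal{M}}(B)+\ind^L_{\mathcal{M}}(C)=-(\ind^L_{\mathcal{M}}(X')-\ind^R_{\mathcal{M}}(X'))$. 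The remaining task is to construct $X\in\mathcal{M}^d\cap\mathcal{M}_1$ from $X'$ so that $\ind^L_{\mathcal{M}}(X)-\ind^R_{\mathcal{M}}(X)$ equals the sign-flipped value above and $\mathbb{E}(X,-)|_{\mathcal{M}}\cong\coker(G_{\mathcal{M}}(f))$; the natural candidate is an object fitted with $X'$ and an $\mathcal{M}$-object coming from $A_0\oplus C_0$ in an $\mathbb{E}$-triangle that upgrades the filtration length by one while swapping the roles of left and right indices. The cokernel identification then follows by comparing the long exact sequences of Theorem 3.5 of \cite{Go} applied to $A\to B\to C\dashrightarrow$ and to $K_0^A\to K_0^B\to K_0^C\dashrightarrow$, chased against the constructed $3\times 3$ diagram, using $d$-rigidity to collapse the higher $\mathbb{E}^i$-contributions from $\mathcal{M}$-objects.

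The base case $d=1$ is handled directly: a single $3\times 3$ applied to $A\to B\to C$ and the length-$1$ filtrations produces a terminal triangle $A_1\to Q\to C_1\dashrightarrow$ in $\mathcal{M}*\mathcal{M}$, with $Q$ sitting in the middle column $B\to A_0\oplus C_0\to Q\dashrightarrow$; the desired $X\in\mathcal{M}^1\cap\mathcal{M}_1$ is then built from this data, possibly via a further ET4 completion. The main obstacle I anticipate is precisely this step-up construction of $X$ (both in the base case and in the inductive step): producing an object simultaneously in $\mathcal{M}^d$ and in $\mathcal{M}_1$ that realizes the sign-flipped $\Psi$-value and $\coker(G_{\mathcal{M}}(f))$ requires delicate use of the extriangulated machinery and careful bookkeeping of the various filtration structures involved.
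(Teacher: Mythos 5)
Your setup (the $3\times 3$ diagram combining the extriangle with the first filtration steps of $A$ and $C$, plus Proposition \ref{L-7}(1) to get $K_0^B\in\mathcal{M}^{d-1}$) is sound and close in spirit to the paper's diagram manipulations, but the proposal has a genuine gap at exactly the point you flag yourself: you never construct the witness object. The proposition does not merely assert that the alternating sum of left indices lies in $\im\Psi$; it asserts that the \emph{specific} functor $\coker(G_{\mathcal{M}}(f))$ is of the form $\mathbb{E}(S,-)|_{\mathcal{M}}$ for some $S\in\mathcal{M}^{d}\cap\mathcal{M}_{1}$ and that the alternating sum equals $\Psi$ of that functor. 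Your induction hands you a witness $X'$ for $\coker(G_{\mathcal{M}}(K_0^A\to K_0^B))$, but this functor is \emph{not} isomorphic to $\coker(G_{\mathcal{M}}(f))$: by the long exact sequence for $C\to C_0\to K_0^C\dashrightarrow$ one has $\mathbb{E}(K_0^C,-)|_{\mathcal{M}}\cong\coker(\Hom(C_0,-)\to\Hom(C,-))|_{\mathcal{M}}$ while $\mathbb{E}(C,-)|_{\mathcal{M}}\cong\mathbb{E}^2(K_0^C,-)|_{\mathcal{M}}$, so the two cokernels live one homological degree apart. The ``step-up construction of $X$ that swaps left and right indices and realizes the sign flip'' is therefore not a routine completion but the entire content of the proof, and leaving it as ``the natural candidate is an object fitted with $X'$ and an $\mathcal{M}$-object\dots possibly via a further ET4 completion'' does not establish it.

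For comparison, the paper avoids your induction on $d$ altogether and builds the witness in one shot: it first forms $X_0\to T_1\to C\dashrightarrow$ from the single filtration step $A\to X_0\to K_0$ (showing $\coker(G_{\mathcal{M}}(f))\cong\coker(G_{\mathcal{M}}(X_0\to T_1))$), then takes one step $T_1\to M\to T_2\dashrightarrow$ of an $\mathcal{M}^L$-filtration of $T_1$ and applies (ET4) to produce $S$ sitting simultaneously in $X_0\to M\to S\dashrightarrow$ (whence $S\in\mathcal{M}_1$ and $\ind^{R}_{\mathcal{M}}(S)=[M]-[X_0]$) and in $C\to S\to T_2\dashrightarrow$ (whence $S\in\mathcal{M}^{d}$ and $\ind^{L}_{\mathcal{M}}(S)=\ind^{L}_{\mathcal{M}}(C)+\ind^{L}_{\mathcal{M}}(T_2)$); a final diagram chase identifies $\mathbb{E}(S,-)|_{\mathcal{M}}$ with $\coker(G_{\mathcal{M}}(f))$. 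If you want to salvage your inductive scheme, you would need to supply an explicit such octahedron at each stage and track the degree shift in the cokernels, at which point you would essentially be reproducing this direct construction.
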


		\begin{proof}
			(1)  Suppose that		\begin{equation*}
				(A\stackrel{}{\longrightarrow}X_{0}\stackrel{}{\longrightarrow}\cdots \stackrel{}{\longrightarrow}X_{t-1}\stackrel{}{\longrightarrow}X_{t}{\longrightarrow}0)\sim(K_{0},K_{1},\cdots,K_{t-1}\cong X_t),
			\end{equation*}
			\begin{equation*}
				(C\stackrel{}{\longrightarrow}Y_{0}\stackrel{}{\longrightarrow}\cdots \stackrel{}{\longrightarrow}Y_{t-1}\stackrel{}{\longrightarrow}Y_{t}{\longrightarrow}0)\sim (P_{0},P_{1},\cdots,P_{t-1}\cong Y_t).
			\end{equation*}
			Now, we consider the following commutative diagram of $\mathbb{E}$-triangles
			\begin{equation}\label{D-1}
				\xymatrix{
					A\ar[r]^{f} \ar[d]_{}& B \ar[r]^{g}\ar[d] &  C\ar@{-->}[r]^{} \ar@{=}[d]_{} &  \\
					X_{0} \ar[r]^{a} \ar[d]&  T_{1} \ar[r]^{b}\ar[d] &  C\ar@{-->}[r]^{}&  \\
					K_{0}\ar@{=}[r]^{}\ar@{-->}[d]^{} &  K_{0} \ar@{-->}[d]^{}& &\\
					&  &  &  }
			\end{equation}
			We divide
			the proof into the following steps:
			
			$\mathbf{(Step~1)}$ Since $K_{0}\in\mathcal{M}^{t-1}$ and $B\in\mathcal{M}^{t}$, it follows that $T_{1}\in \mathcal{M}^{t}$ and $\ind^{L}_{\mathcal{M}}(T_{1})=\ind^{ L}_{\mathcal{M}}(B)+\ind^{ L}_{\mathcal{M}}(K_{0})$ by Proposition \ref{L-7}(1). On the other hand, we have  $[X_{0}]=\ind^{L}_{\mathcal{M}}(A)+\ind^{L}_{\mathcal{M}}(K_{0})$. Hence,
			\begin{flalign*}
				\ind^{ L}_{\mathcal{M}}(A)-\ind^{ L}_{\mathcal{M}}(B)+\ind^{ L}_{\mathcal{M}}(C)&=[X_{0}]-\ind^{ L}_{\mathcal{M}}(T_{1})+\ind^{ L}_{\mathcal{M}}(C).
			\end{flalign*}
			
			$\mathbf{(Step~2)}$ Applying the functor $G_{\mathcal{M}}$ to  (\ref{D-1}),  we have a  commutative diagram of exact sequences as follows:
			\begin{equation*}
				\xymatrix{
					&	&	& \mathbb{E}(K_0,-)|_{\mathcal{M}}=0 \ar[d]      \\
					G_{\mathcal{M}}(T_1)\ar[r]^{G_{\mathcal{M}}(a)}\ar[d]	&	G_{\mathcal{M}}(X_0)\ar[r]^{}	\ar[d]&	\mathbb{E}(C,-)|_{\mathcal{M}} \ar[r]^{b^{*}} \ar@{=}[d]&  	\mathbb{E}(T_1,-)|_{\mathcal{M}} \ar[d]   	  \\
					G_{\mathcal{M}}(B)\ar[r]^{G_{\mathcal{M}}(f)}		&	G_{\mathcal{M}}(A)\ar[r]^{}		&	\mathbb{E}(C,-)|_{\mathcal{M}}\ar[r]^{g^{*}}	 & 	\mathbb{E}(B,-)|_{\mathcal{M}}  	}
			\end{equation*}
			Note that $\mathbb{E}(K_0,-)|_{\mathcal{M}}=0$. It follows that 
			$$\coker(G_{\mathcal{M}}(f))\cong\ker(g^{*})=\ker(b^{*})\cong\coker(G_{\mathcal{M}}(a)).$$
			
			$\mathbf{(Step~3)}$ Recall that $T_{1}\in \mathcal{M}^{t}$. Then there exists an $\mathbb{E}$-triangle
			$$T_{1}\stackrel{}{\longrightarrow} M{\longrightarrow}T_{2}\stackrel{}\dashrightarrow$$
			such that $M\in\mathcal{M}$ and $T_{2}\in \mathcal{M}^{t-1}$. In particular, we have $[M]=\ind^{ L}_{\mathcal{M}}(T_{1})+\ind^{ L}_{\mathcal{M}}(T_{2})$. Applying (ET4), we have the following commutative diagram
			\begin{equation}\label{D-2}
				\xymatrix{
					X_{0}\ar[r]^{a} \ar@{=}[d]_{}& T_{1} \ar[r]^{b}\ar[d] &  C\ar@{-->}[r]^{} \ar[d]_{} &  \\
					X_{0} \ar[r]^{} &  M \ar[r]^{c}\ar[d] &  S\ar@{-->}[r]^{}\ar[d] &  \\
					&  T_{2} \ar@{=}[r] \ar@{-->}[d]^{}& T_{2}\ar@{-->}[d]^{}&\\
					&  &  &  }
			\end{equation}
			
			Observe that $S\in\mathcal{M}_{1}$ and $\ind^{R}_{\mathcal{M}}(S)=[M]-[X_{0}]$. By Proposition \ref{L-7}(1), we also have $S\in\mathcal{M}^{t}$ and  $\ind^{L}_{\mathcal{M}}(S)=\ind^{ L}_{\mathcal{M}}(C)+\ind^{ L}_{\mathcal{M}}(T_{2})$. It follwos that
			\begin{flalign*}
				\ind^{ L}_{\mathcal{M}}(A)-\ind^{ L}_{\mathcal{M}}(B)+\ind^{ L}_{\mathcal{M}}(C)&=[X_{0}]-\ind^{L}_{\mathcal{M}}(T_{1})+\ind^{ L}_{\mathcal{M}}(C)\\
				&=[X_{0}]-[M]+\ind^{ L}_{\mathcal{M}}(S)\\
				&=\ind^{L}_{\mathcal{M}}(S)-\ind^{R}_{\mathcal{M}}(S)\\
				&=\Psi(\mathbb{E}(S,-)|_{\mathcal{M}}).
			\end{flalign*}
			
			$\mathbf{(Step~4)}$ 	Applying the functor $G_{\mathcal{M}}$ to  (\ref{D-2}),  we have a  commutative diagram of exact sequences 
			\begin{equation*}
				\xymatrix{
					&	& \mathbb{E}(T_2,-)|_{\mathcal{M}}=0 \ar[d]  &    \\
					G_{\mathcal{M}}(M)\ar[r]^{G_{\mathcal{M}}(c)}\ar[d]	&	G_{\mathcal{M}}(X_0)\ar[r]^{}	\ar@{=}[d]&	\mathbb{E}(S,-)|_{\mathcal{M}}  \ar[d]\ar[r]^-{c^{*}}	  & \mathbb{E}(M,-)|_{\mathcal{M}}=0 \ar[d]\\
					G_{\mathcal{M}}(T_1)\ar[r]^{G_{\mathcal{M}}(a)}		&	G_{\mathcal{M}}(X_0)\ar[r]^{}		&	\mathbb{E}(C,-)|_{\mathcal{M}}\ar[r]^-{b^{*}}	   &\mathbb{E}(T_1,-)|_{\mathcal{M}}	}
			\end{equation*}
			By (Step 2), we infer that
			$$\coker(G_{\mathcal{M}}(f))\cong\coker(G_{\mathcal{M}}(a))\cong\ker(b^{*})=\ker(c^{*})=\mathbb{E}(S,-)|_{\mathcal{M}}.$$
			This finishes the proof. The proof of (2) is similar.
		\end{proof}

		\begin{remark}\label{R-3-13} For any $S\in\mathcal{M}^{d}\cap\mathcal{M}_{1}$, there exists an $\mathbb{E}$-triangle sequence
			\begin{equation*}
				X_{0}\stackrel{f}{\longrightarrow}X_{1}\stackrel{}{\longrightarrow}\cdots \stackrel{}{\longrightarrow}X_{d+2}\stackrel{}{\longrightarrow}X_{d+3}
			\end{equation*}
			such that each $X_{i}\in \mathcal{M}$ and $\cone(f)\cong S$. This implies that
			$$K_{0}^{\rm sp}(\mathcal{M})/\im\Psi=K_{0}^{\rm sp}(\mathcal{M})/\lr{\sum_{i=0}^{d+2}(-1)^{i}[X_{i}]~|~X_{0}{\longrightarrow}\cdots \stackrel{}{\longrightarrow} X_{d+1}\stackrel{}{\longrightarrow}X_{d+3}~$$
				$$\text{is an $\mathbb{E}$-triangle sequence with terms in  $\mathcal{M}$}}.$$
			We now repeat the argument to $\mathcal{M}_{d}\cap\mathcal{M}^{1}$. Then we obtain $K_{0}^{\rm sp}(\mathcal{M})/\im\Phi=K_{0}^{\rm sp}(\mathcal{M})/\im\Psi$.
		\end{remark}
Following \cite{Wl}, the {\em index Grothendieck group} of $\mathcal{M}$ is defined as 
		$$ K_{0}^{\rm in}(\mathcal{M}):=K_{0}^{\rm sp}(\mathcal{M})/\im\Phi=K_{0}^{\rm sp}(\mathcal{M})/\im\Psi.$$
Note that $K_{0}^{\rm in}(\mathcal{M})=K_{0}^{\rm sp}(\mathcal{M})$ if $\mathcal{M}$ is $\infty$-rigid. Let us state the main result in this section in the following theorem.

		\begin{theorem}\label{main} Let $\mathcal{M}$ be a $d$-rigid subcategory of $\mathscr{C}$.
			
			$(1)$  If $1\leq t<d$, then there exists  group isomorphisms
			\[
			\begin{tikzcd}[column sep =40.5, row sep =40.5]
				K_{0}(\mathcal{M}^{t})  \rar[rightarrow,"\ind^{L}_{\mathcal{M}}","\cong"']  &   K_{0}^{\rm sp}(\mathcal{M})&K_{0}(\mathcal{M}_{t}). \lar[rightarrow,"\ind^{R}_{\mathcal{M}}"',"\cong"]
			\end{tikzcd}
			\]
			
			$(2)$  We have group isomorphisms
			\[
			\begin{tikzcd}[column sep =40.5, row sep =40.5]
				K_{0}(\mathcal{M}^{d})  \rar[rightarrow,"\ind^{L}_{\mathcal{M}}","\cong"']  &   K_{0}^{\rm in}(\mathcal{M})
				&  	K_{0}(\mathcal{M}_{d})  \lar[rightarrow,"\ind^{R}_{\mathcal{M}}"',"\cong"].
			\end{tikzcd}
			\]		
			
			$(3)$  If $\mathcal{M}$ is $\infty$-rigid, then there exists group isomorphisms
			\[
			\begin{tikzcd}[column sep =40.5, row sep =40.5]
				K_{0}(\mathcal{M}^{L})  \rar[rightarrow,"\ind^{L}_{\mathcal{M}}","\cong"']  &   K_{0}^{\rm sp}(\mathcal{M})&K_{0}(\mathcal{M}^{R}). \lar[rightarrow,"\ind^{R}_{\mathcal{M}}"',"\cong"]
			\end{tikzcd}
			\]
		\end{theorem}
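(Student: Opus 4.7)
All three parts share a common strategy: produce an explicit inverse $\iota$ to each index map from the inclusion $\mathcal{M}\subseteq\mathcal{N}$ (where $\mathcal{N}$ is one of $\mathcal{M}^t$, $\mathcal{M}_t$, $\mathcal{M}^d$, $\mathcal{M}^L$, or $\mathcal{M}^R$), and verify both compositions are the identity by a telescoping computation. Concretely, $\iota:K_0^{\rm sp}(\mathcal{M})\to K_0(\mathcal{N})$ sends $[M]\mapsto[M]$. The identity $\ind^{L}_{\mathcal{M}}\circ\iota=\id$ is immediate, since for $M\in\mathcal{M}$ the length-zero filtration gives $\ind^{L}_{\mathcal{M}}(M)=[M]$. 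For the other direction, take $X\in\mathcal{N}$ with $\mathcal{M}^L$-filtration $(X\to X_0\to\cdots\to X_t\to 0)\sim(K_0,\ldots,K_{t-1})$; each $K_i\in\mathcal{M}^{t-i-1}\subseteq\mathcal{N}$, so the $\mathbb{E}$-triangles $X\to X_0\to K_0\dashrightarrow$ and $K_i\to X_{i+1}\to K_{i+1}\dashrightarrow$ all have terms in $\mathcal{N}$, and iterating the resulting Euler relations yields $[X]=\sum_{i=0}^{t}(-1)^{i}[X_{i}]=\iota(\ind^{L}_{\mathcal{M}}(X))$ in $K_0(\mathcal{N})$. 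The right-hand isomorphisms are obtained dually.

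Part (1) and Part (3) are then essentially immediate applications of this common argument. For Part (1), well-definedness of $\ind^{L}_{\mathcal{M}}$ on $K_0(\mathcal{M}^t)$ is Remark \ref{R-3-9}(2), and the inclusion $\mathcal{M}^{t-i-1}\subseteq\mathcal{M}^t$ used in the telescope is tautological. For Part (3), when $\mathcal{M}$ is $\infty$-rigid, Lemma \ref{L-A}(3,4) verifies the hypotheses of Proposition \ref{L-7} for arbitrary $t$, so $\ind^{L}_{\mathcal{M}}$ extends to a well-defined map $K_0(\mathcal{M}^L)\to K_0^{\rm sp}(\mathcal{M})$; the inverse construction above applies verbatim, and $K_0^{\rm in}(\mathcal{M})=K_0^{\rm sp}(\mathcal{M})$ in this case, as observed immediately before the theorem.

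Part (2) requires one extra step. Proposition \ref{L-3-11}(1) shows that the would-be Euler relation for an $\mathbb{E}$-triangle in $\mathcal{M}^d$ lies in $\im\Psi$, so $\ind^{L}_{\mathcal{M}}$ descends to a map $K_0(\mathcal{M}^d)\to K_0^{\rm in}(\mathcal{M})$. Dually, one must show that $\iota:K_0^{\rm sp}(\mathcal{M})\to K_0(\mathcal{M}^d)$ factors through $K_0^{\rm in}(\mathcal{M})=K_0^{\rm sp}(\mathcal{M})/\im\Psi$. By Remark \ref{R-3-13} it suffices to treat a generator $\Psi(\mathbb{E}(S,-)|_{\mathcal{M}})=\ind^{L}_{\mathcal{M}}(S)-\ind^{R}_{\mathcal{M}}(S)$ for $S\in\mathcal{M}^d\cap\mathcal{M}_1$. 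The telescoping argument above yields $\iota(\ind^{L}_{\mathcal{M}}(S))=[S]$ in $K_0(\mathcal{M}^d)$, while the defining $\mathbb{E}$-triangle $M_1\to M_0\to S\dashrightarrow$ of the $\mathcal{M}^R$-filtration of length $1$ has all terms in $\mathcal{M}^d$ (since $M_0,M_1\in\mathcal{M}\subseteq\mathcal{M}^d$ and $S\in\mathcal{M}^d$), giving $\iota(\ind^{R}_{\mathcal{M}}(S))=[M_0]-[M_1]=[S]$. Hence $\iota$ annihilates this generator, and the identity checks follow by the common argument. This is the main obstacle I anticipate: since $\mathcal{M}^d$ need not be closed under extensions (Remark \ref{R-3-9}(1)), one must carefully verify that every intermediate object in the telescope remains in $\mathcal{M}^d$; the filtration data for $S\in\mathcal{M}^d\cap\mathcal{M}_1$ fortunately keeps everything inside, so the Euler relations in $K_0(\mathcal{M}^d)$ apply without incident.
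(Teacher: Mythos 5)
Your proposal is correct and follows essentially the same route as the paper: in each case the index map is shown to be inverse to the map induced by the inclusion $\mathcal{M}\hookrightarrow\mathcal{N}$, with the telescoping of Euler relations (valid because the intermediate cones $K_i$ lie in $\mathcal{M}^{t-i-1}\subseteq\mathcal{N}$) giving $[X]=\sum_i(-1)^i[X_i]$ in $K_0(\mathcal{N})$. For part (2) the paper likewise verifies that the inclusion kills the generators $\ind^L_{\mathcal{M}}(S)-\ind^R_{\mathcal{M}}(S)$ of $\im\Psi$ for $S\in\mathcal{M}^d\cap\mathcal{M}_1$ via the associated $\mathbb{E}$-triangle sequence with terms in $\mathcal{M}$, exactly as in your argument.
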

		\begin{proof} (1) By Remark \ref{R-3-9}(2),   the map $\ind^{L}_{\mathcal{M}}:K_{0}(\mathcal{M}^{t})\longrightarrow K_{0}^{\rm sp}(\mathcal{M})$ is well-defined. For any $[X]\in K_{0}(\mathcal{M}^{t})$, we have $\ind^{L}_{\mathcal{M}}(X)=[X]$ in $K_{0}(\mathcal{M})$. On the other hand, we have $\ind^{L}_{\mathcal{M}}(M)=[M]$ for any $M\in \mathcal{M}$. Hence,  $\ind^{L}_{\mathcal{M}}$ is inverse to the natural inclusion $ K_{0}^{\rm sp}(\mathcal{M})\hookrightarrow K_{0}(\mathcal{M}^{t})$. Similarly, one can check that $\ind^{R}_{\mathcal{M}}:K_{0}(\mathcal{M}_{t})\longrightarrow K_{0}^{\rm sp}(\mathcal{M})$ is  an isomorphism.
			
			$(2)$ Note that $\ind^{L}_{\mathcal{M}}:K_{0}(\mathcal{M}_{d}) \longrightarrow  K_{0}^{\rm sp}(\mathcal{M})/\im\Psi$ is well-defined follows from Proposition \ref{L-3-11}(1). We consider the following diagram
			\[
			\begin{tikzcd}[column sep =40.5, row sep =40.5]
				K^{\rm sp}_{0}(\mathcal{M})  \dar[hookrightarrow,"i"'] \rar[twoheadrightarrow,"\pi_{1}"]  &    K_{0}^{\rm sp}(\mathcal{M})/\im\Psi \dar["l", dashed] \\
				K^{\rm sp}_{0}(\mathcal{M}^{d}) \rar[twoheadrightarrow,"\pi_{2}"'] & K_{0}(\mathcal{M}^{d})  
			\end{tikzcd}
			\]
			where $i$ is an inclusion and $\pi_{1},\pi_{2}$ are natural projections. For any $S\in\mathcal{M}^{d}\cap\mathcal{M}_{1}$, we have an $\mathbb{E}$-triangle sequence
			\begin{equation*}
				X_{0}\stackrel{f}{\longrightarrow}X_{1}\stackrel{}{\longrightarrow}X_{2}{\longrightarrow} X_{3}{\longrightarrow}\cdots \stackrel{}{\longrightarrow}X_{t+1}\stackrel{}{\longrightarrow}X_{t+2}
			\end{equation*}
			such that $\cone(f)=S$ and each $X_{i}\in \mathcal{M}$.  Hence $\ind^{L}_{\mathcal{M}}(S)-\ind^{R}_{\mathcal{M}}(S)=\Sigma^{t+2}_{i=0}(-1)^{i}[X_{i}]=0$ in  $K_{0}(\mathcal{M})$. The observation above implies that the inclusion $i$ induces a group homomorphism $$l: K_{0}^{\rm sp}(\mathcal{M})/\im\Psi \rightarrow K_{0}(\mathcal{M}^{d})$$ such that $l\pi_{1}=\pi_{2}i$. It is routine to verify that $\ind^{L}_{\mathcal{M}}$ is inverse to the group homomorphism $l$. A similar argument shows that
			$\ind^{R}_{\mathcal{M}}:	K_{0}(\mathcal{M}_{d})\longrightarrow K_{0}^{\rm sp}(\mathcal{M})/\im\Phi$ is an isomorphism.
			
			$(3)$ Using the analogous arguments as those proving (1),  we can prove  $\ind^{L}_{\mathcal{M}}:	K_{0}(\mathcal{M}^{L}) \longrightarrow K_{0}^{\rm sp}(\mathcal{M})$ is inverse to the natural inclusion $ K_{0}^{\rm sp}(\mathcal{M})\hookrightarrow K_{0}(\mathcal{M}^{L})$. Similarly, we can prove that $\ind^{R}_{\mathcal{M}}:	K_{0}(\mathcal{M}^{R}) \longrightarrow K_{0}^{\rm sp}(\mathcal{M})$ is an isomorphisms. 
		\end{proof}

		\section{Grothendieck groups and silting subcategories}
Our aim in this section is to investigate the Grothendieck group  $K_{0}(\mathscr{C})$ via   silting subcategories. Then we establish several isomorphisms of $K_{0}(\mathscr{C})$ in terms of silting subcategories and bounded hereditary  cotorsion pairs.

	We	say a subcategory  $\mathcal{M}\subseteq\mathscr{C}$  is {\em thick} if $\mathcal{M}$ is closed under extensions, cones, cocones and direct summands. We denote by $\thick(\mathcal{M})$ the  smallest thick subcategory containing $\mathcal{M}$. 
		
		\begin{definition} A $\infty$-rigid subcategory $\mathcal{M}$ is  {\em silting} if $\mathcal{M}=\add(\mathcal{M})$ and $\thick(\mathcal{M})=\mathscr{C}$.
		\end{definition}
		
		Let $(\mathcal{T},\mathcal{F})$ be a cotorsion pair in $\mathscr{C}$.  We say  $(\mathcal{T},\mathcal{F})$ is  {\em bounded} if $\mathcal{T}^{R}=\mathcal{F}^{L}=\mathscr{C}$. It was proved in \cite[Theorem 5.7]{Ad} that there exists a bijection between silting subcategories and bounded hereditary  cotorsion pairs. The bijection sends a silting subcategory $\mathcal{M}$ to the bounded hereditary  cotorsion pair $(\mathcal{M}^{L},\mathcal{M}^{R})$. Its inverse is given by $(\mathcal{T},\mathcal{F})\mapsto\mathcal{T}\cap\mathcal{F} $. By this, we introduce the following notation.

		\begin{definition}\label{D-4-2} Let $\mathcal{M}$ be a silting subcategory of $\mathscr{C}$. For any $X\in\mathscr{C}$, there exists two $\mathbb{E}$-triangles
			$$F_{X}\stackrel{}{\longrightarrow} T_{X}{\longrightarrow}X\stackrel{}\dashrightarrow~\text{and}~X\stackrel{}{\longrightarrow} F^{X}{\longrightarrow}T^{X}\stackrel{}\dashrightarrow$$
			such that $F_{X},F^{X}\in\mathcal{M}^{R}$ and $T_{X},T^{X}\in\mathcal{M}^{L}$. The {\em left silting index} of $X$ with respect to $\mathcal{M}$ is the element
			$$\mathbb{L}(X)=\ind^{R}_{\mathcal{M}}(F^{X})-\ind^{L}_{\mathcal{M}}(T^{X})\in K_{0}^{\rm sp}(\mathcal{M}).$$
			Dually, the {\em right silting index} of $X$ with respect to $\mathcal{M}$ is the element
			$$\mathbb{R}(X)=\ind^{L}_{\mathcal{M}}(T_{X})-\ind^{R}_{\mathcal{M}}(F_{X})\in K_{0}^{\rm sp}(\mathcal{M}).$$
		\end{definition}
	
		In this section, we fix a  silting subcategory  $\mathcal{M}$ in $\mathscr{C}$. Since $\mathcal{M}$ is $\infty$-rigid, the definitions of left index and right index are well-defined (see Remark \ref{R-2-6}). The following characterization of silting index will be useful.

		\begin{lemma}\label{L-3-1} Take $X\in\mathscr{C}$.
			
			$(1)$ If $X\in\mathcal{M}^{L}$, then $\mathbb{R}(X)=\ind^{L}_{\mathcal{M}}(X)$.
			
			$(2)$ If $X\in\mathcal{M}^{R}$, then $\mathbb{R}(X)=\ind^{R}_{\mathcal{M}}(X)$.
			
			$(3)$ Suppose that $X\cong T\oplus F$ with $T\in\mathcal{M}^{L}$ and  $F\in\mathcal{M}^{R}$. Then
			$$\mathbb{R}(X)=\ind^{L}_{\mathcal{M}}(T)+\ind^{R}_{\mathcal{M}}(F).$$
			
			$(4)$ The element   $\mathbb{R}(X)$  does not depend on the choice of $\mathbb{E}$-triangle.
			
		\end{lemma}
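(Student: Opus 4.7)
The plan is to establish part (4) first, which gives the well-definedness of $\mathbb{R}(X)$, and then to deduce (1)--(3) by computing $\mathbb{R}(X)$ with convenient cotorsion approximations.

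For (4), given two $\mathbb{E}$-triangles $F_i \to T_i \xrightarrow{b_i} X \dashrightarrow \delta_i$ ($i=1,2$) of the required form, I would form the homotopy pullback of $b_1$ and $b_2$, producing an object $Y$ sitting in $\mathbb{E}$-triangles $F_1 \to Y \to T_2 \dashrightarrow b_2^{*}\delta_1$ and $F_2 \to Y \to T_1 \dashrightarrow b_1^{*}\delta_2$. Since $(\mathcal{M}^{L}, \mathcal{M}^{R})$ is a hereditary cotorsion pair, $\mathbb{E}(T_j, F_i) = 0$, so both triangles split, yielding $Y \cong F_1 \oplus T_2 \cong F_2 \oplus T_1$. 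I then induct on the maximum $\mathcal{M}^{R}$-filtration length of $F_1$ and $F_2$. In the base case both $F_i \in \mathcal{M}$, so all four objects lie in $\mathcal{M}^{L}$; applying the isomorphism $\ind^{L}_{\mathcal{M}} \colon K_0(\mathcal{M}^{L}) \xrightarrow{\cong} K_0^{\rm sp}(\mathcal{M})$ from Theorem \ref{main}(3) to the identity $[F_1 \oplus T_2] = [F_2 \oplus T_1]$ and using $\ind^{L}_{\mathcal{M}}(F_i) = [F_i] = \ind^{R}_{\mathcal{M}}(F_i)$ gives the desired equality. For the inductive step, take cotorsion approximations $F_{F_i} \to M_i \to F_i$ of each $F_i$; since $\mathcal{M}^{R}$ is extension-closed, $M_i \in \mathcal{M}^{L} \cap \mathcal{M}^{R} = \mathcal{M}$, and $F_{F_i}$ has strictly smaller $\mathcal{M}^{R}$-filtration length. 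Direct-summing each with the identity on the appropriate $T_j$ yields two cotorsion approximations $F_{F_1} \to T_2 \oplus M_1 \to Y$ and $F_{F_2} \to T_1 \oplus M_2 \to Y$ of $Y$; by induction these produce the same value of $\mathbb{R}(Y)$. Substituting $\ind^{R}_{\mathcal{M}}(F_i) = [M_i] - \ind^{R}_{\mathcal{M}}(F_{F_i})$, which follows from Proposition \ref{L-7}(2) applied to $F_{F_i} \to M_i \to F_i$, translates this equality into the identity for $X$.

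With (4) established, the remaining parts are short computations. For (1), the trivial $\mathbb{E}$-triangle $0 \to X \to X \dashrightarrow 0$ is a valid choice, giving $\mathbb{R}(X) = \ind^{L}_{\mathcal{M}}(X)$ immediately. For (2), any cotorsion approximation $F_X \to T_X \to X \dashrightarrow$ has $T_X \in \mathcal{M}^{R} \cap \mathcal{M}^{L} = \mathcal{M}$ because $\mathcal{M}^{R}$ is extension-closed and $X, F_X \in \mathcal{M}^{R}$; hence $\ind^{L}_{\mathcal{M}}(T_X) = [T_X]$, and Proposition \ref{L-7}(2) gives $[T_X] = \ind^{R}_{\mathcal{M}}(F_X) + \ind^{R}_{\mathcal{M}}(X)$, whence $\mathbb{R}(X) = \ind^{R}_{\mathcal{M}}(X)$. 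For (3), combining a cotorsion approximation $F_F \to T_F \to F$ (with $T_F \in \mathcal{M}$ by the same argument) with the trivial approximation of $T$ yields an approximation $F_F \to T \oplus T_F \to T \oplus F \dashrightarrow$; then $\mathbb{R}(X) = \ind^{L}_{\mathcal{M}}(T) + [T_F] - \ind^{R}_{\mathcal{M}}(F_F) = \ind^{L}_{\mathcal{M}}(T) + \ind^{R}_{\mathcal{M}}(F)$ using (2) applied to $F$.

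The main obstacle is the inductive argument in (4). One has to choose the induction measure carefully, verify that both resulting approximations of $Y$ use $F$-terms of strictly smaller $\mathcal{M}^{R}$-filtration length so that the induction terminates, and invoke Theorem \ref{main}(3) at the base case to convert the isomorphism $F_1 \oplus T_2 \cong F_2 \oplus T_1$ into an equation genuinely involving both $\ind^{L}_{\mathcal{M}}$ and $\ind^{R}_{\mathcal{M}}$; everything else in the lemma is a direct calculation once well-definedness is secured.
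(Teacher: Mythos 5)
Your proof is correct, but it is organized in the reverse order from the paper's. The paper first proves (1)--(3) directly for an \emph{arbitrary} choice of approximating $\mathbb{E}$-triangle (part (3) via an $\mathrm{(ET4)^{op}}$ diagram showing $T_{1}\cong K\oplus T$ with $K\in\mathcal{M}$), and then gets (4) in one line: from $T_{1}\oplus F_{2}\cong F_{1}\oplus T_{2}$ the two decompositions of this object, fed into (3), give $\ind^{L}_{\mathcal{M}}(T_{1})+\ind^{R}_{\mathcal{M}}(F_{2})=\ind^{L}_{\mathcal{M}}(T_{2})+\ind^{R}_{\mathcal{M}}(F_{1})$. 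You instead prove (4) first by an induction on filtration length and then harvest (1)--(3) by choosing convenient triangles; your proof of (1) via the trivial triangle $0\to X\to X\dashrightarrow$ is in fact shorter than the paper's. Both arguments hinge on the same swap isomorphism $T_{1}\oplus F_{2}\cong F_{1}\oplus T_{2}$ (your homotopy-pullback justification of it is more detailed than the paper's), on Proposition \ref{L-7}, and on extension-closure of $\mathcal{M}^{R}$, so the net cost is comparable; what your ordering buys is that well-definedness is secured before any formula is asserted, at the price of the induction. One point you should tighten: for an \emph{arbitrary} cotorsion approximation $F_{F_i}\to M_i\to F_i$ there is no reason for $F_{F_i}$ to have smaller $\mathcal{M}^{R}$-filtration length than $F_i$, so this cannot be stated as a consequence. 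You must \emph{choose} the approximation to be the first triangle $P_0\to Y_0\to F_i\dashrightarrow$ of a minimal-length $\mathcal{M}^{R}$-filtration of $F_i$; then $M_i=Y_0\in\mathcal{M}$ automatically (making the extension-closure remark unnecessary at that point) and $F_{F_i}=P_0$ has strictly smaller length, so the induction terminates. With that choice made explicit, the argument goes through.
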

		\begin{proof}We take two $\mathbb{E}$-triangles
			$$F_{1}\stackrel{}{\longrightarrow} T_{1}{\longrightarrow}X\stackrel{}\dashrightarrow~\text{and}~F_{2}\stackrel{}{\longrightarrow} T_{2}{\longrightarrow}X\stackrel{}\dashrightarrow$$
			with $T_{1},T_{2}\in\mathcal{M}^{L}$ and $F_{1},F_{2}\in\mathcal{M}^{R}$.
			
			(1) Since $X\in\mathcal{M}^{L}$, we have $T_{1}\cong F_{1}\oplus X$ and hence $F_{1}\in\mathcal{M}^{L}\cap\mathcal{M}^{R}=\mathcal{M}$. By using  Proposition \ref{L-7}(1), we infer that 
			$$\mathbb{R}(X)=\ind^{L}_{\mathcal{M}}(T_{1})-\ind^{R}_{\mathcal{M}}(F_{1})=\ind^{L}_{\mathcal{M}}(T_{1})-[F_{1}]=\ind^{L}_{\mathcal{M}}(X).$$

			$(2)$ Since $X\in\mathcal{M}^{R}$, we have $T_{1}\in\mathcal{M}^{L}\cap\mathcal{M}^{R}=\mathcal{M}$. Then Proposition \ref{L-7}(2) implies that 
			$$\mathbb{R}(X)=\ind^{L}_{\mathcal{M}}(T_{1})-\ind^{R}_{\mathcal{M}}(F_{1})=[T_{1}]-\ind^{R}_{\mathcal{M}}(F_{1})=\ind^{R}_{\mathcal{M}}(X).$$

			(3)  Applying $\rm (ET4)^{op}$, we have  the following commutative diagram
			\begin{equation*}
				\xymatrix{
					F_{1}\ar[r]^{} \ar@{=}[d]_{}& K \ar[r]^{}\ar[d] &  F\ar@{-->}[r]^{} \ar[d]_{} &  \\
					F_{1}\ar[r]^{} & T_{1} \ar[r]^{}\ar[d] &  X\ar@{-->}[r]^{}\ar[d] &  \\
					&  T \ar@{=}[r] \ar@{-->}[d]^{0}& T\ar@{-->}[d]^{0}&\\
					&  &  &  }
			\end{equation*}
			Since $F_{1},F\in\mathcal{M}^{R}$, we have $K\in\mathcal{M}^{R}$ and hence $T_{1}\cong K\oplus T$. In particular, we have $K\in\mathcal{M}^{L}\cap\mathcal{M}^{R}=\mathcal{M}$. By (1), we have
			$$\mathbb{R}(T)=\ind^{L}_{\mathcal{M}}(T)=\ind^{L}_{\mathcal{M}}(T_{1})-\ind^{R}_{\mathcal{M}}(K)=\ind^{L}_{\mathcal{M}}(T_{1})-[K].$$
			By using Proposition \ref{L-7}(2), we obtain that
			$$[K]=\ind^{R}_{\mathcal{M}}(K)=\ind^{R}_{\mathcal{M}}(F_{1})+\ind^{R}_{\mathcal{M}}(F).$$
			and thus
			$$\mathbb{R}(X)=\ind^{L}_{\mathcal{M}}(T_{1})-\ind^{R}_{\mathcal{M}}(F_{1})=\ind^{L}_{\mathcal{M}}(T)+\ind^{R}_{\mathcal{M}}(F).$$
			
			$(4)$
			Since $\mathbb{E}(\mathcal{M}^{L},\mathcal{M}^{R})=0$, we have $T_{1}\oplus F_{2}\cong F_{1}\oplus T_{2}$. By (3), we get
			$$\ind^{L}_{\mathcal{M}}(T_{1})+\ind^{R}_{\mathcal{M}}(F_{2})=\ind^{L}_{\mathcal{M}}(T_{2})+\ind^{R}_{\mathcal{M}}(F_{1})$$
			and thus
			$$\ind^{L}_{\mathcal{M}}(T_{1})-\ind^{R}_{\mathcal{M}}(F_{1})=\ind^{L}_{\mathcal{M}}(T_{2})-\ind^{R}_{\mathcal{M}}(F_{2}).$$
			This shows that  $\mathbb{R}(X)$ does not depend on the choice of $\mathbb{E}$-triangle.
		\end{proof}
		
		It is clear that $\mathbb{R}(X\oplus Y)=\mathbb{R}(X)\oplus \mathbb{R}(Y)$ for any $X,Y\in\mathscr{C}$. Thus Lemma \ref{L-3-1} implies that there is a map $\mathbb{R}:K_{0}^{\rm sp}(\mathscr{C})\rightarrow K_{0}^{\rm sp}(\mathcal{M}); [X]\mapsto\mathbb{R}(X)$. 
		
		\begin{proposition}\label{P-3-1} The map $\mathbb{R}$ descends to the  Grothendieck group $K_{0}(\mathscr{C})$.
		\end{proposition}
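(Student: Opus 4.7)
The plan is to show that $\mathbb{R}$ is additive on $\mathbb{E}$-triangles, which will force it to descend to $K_0(\mathscr{C})$. Concretely, I must verify that for every $\mathbb{E}$-triangle $A \to B \to C \dashrightarrow$ in $\mathscr{C}$, one has $\mathbb{R}(B) = \mathbb{R}(A) + \mathbb{R}(C)$ in $K_0^{\rm sp}(\mathcal{M})$.

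The central step is a horseshoe-type construction adapted to the bounded hereditary cotorsion pair $(\mathcal{M}^L, \mathcal{M}^R)$. Choose $\mathbb{E}$-triangles $F_A \to T_A \to A \dashrightarrow$ and $F_C \to T_C \to C \dashrightarrow$ as in Definition \ref{D-4-2}, so that $T_A, T_C \in \mathcal{M}^L$ and $F_A, F_C \in \mathcal{M}^R$. Using the vanishing $\mathbb{E}(\mathcal{M}^L, \mathcal{M}^R) = 0$ together with repeated applications of $\rm (ET4)$ and its dual, I would assemble a commutative $3 \times 3$ diagram whose rows and columns are $\mathbb{E}$-triangles: the outer columns being the chosen resolutions of $A$ and $C$, the middle column an $\mathbb{E}$-triangle $F_B \to T_B \to B \dashrightarrow$, the top row an $\mathbb{E}$-triangle $F_A \to F_B \to F_C \dashrightarrow$, and the middle row an $\mathbb{E}$-triangle $T_A \to T_B \to T_C \dashrightarrow$. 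Since $\mathcal{M}^L$ and $\mathcal{M}^R$ are each closed under extensions (as the two halves of a cotorsion pair), the objects $T_B$ and $F_B$ lie in $\mathcal{M}^L$ and $\mathcal{M}^R$ respectively, so the middle column is a legitimate resolution of $B$ in the sense of Definition \ref{D-4-2}.

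With the horseshoe diagram in place, I would invoke Theorem \ref{main}(3): since $\mathcal{M}$ is $\infty$-rigid, the left index defines a group homomorphism $\ind^L_{\mathcal{M}}: K_0(\mathcal{M}^L) \to K_0^{\rm sp}(\mathcal{M})$, and applying it to the $\mathbb{E}$-triangle $T_A \to T_B \to T_C \dashrightarrow$ yields $\ind^L_{\mathcal{M}}(T_B) = \ind^L_{\mathcal{M}}(T_A) + \ind^L_{\mathcal{M}}(T_C)$. Dually, $\ind^R_{\mathcal{M}}(F_B) = \ind^R_{\mathcal{M}}(F_A) + \ind^R_{\mathcal{M}}(F_C)$. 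Subtracting the two identities and invoking the definition of $\mathbb{R}$ gives $\mathbb{R}(B) = \mathbb{R}(A) + \mathbb{R}(C)$, as required. Because Lemma \ref{L-3-1}(4) already guarantees that $\mathbb{R}$ is independent of the chosen resolution, no further compatibility needs to be checked.

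The main obstacle is the horseshoe construction itself: one must orchestrate the octahedral-type axioms in the extriangulated setting to produce the $3 \times 3$ diagram, and then confirm that the middle row and column stay inside $\mathcal{M}^L$ and $\mathcal{M}^R$. Once this is done, the additivity of $\mathbb{R}$ on $\mathbb{E}$-triangles is a short computation on top of Theorem \ref{main}(3), and the universal property of $K_0(\mathscr{C})$ produces the desired descent.
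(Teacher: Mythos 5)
Your overall architecture differs from the paper's and is salvageable, but it hinges entirely on the horseshoe construction, and as stated that step has a genuine gap. The vanishing $\mathbb{E}(\mathcal{M}^{L},\mathcal{M}^{R})=0$ together with $\rm (ET4)$ is \emph{not} enough to produce the $3\times 3$ diagram with both outer columns prescribed. Concretely, the first thing one wants to do is lift $T_C\rightarrow C$ through the deflation $B\rightarrow C$; the obstruction is $h^{*}\delta\in\mathbb{E}(T_C,A)$, and since $A$ is an arbitrary object of $\mathscr{C}$ this group is not controlled by the cotorsion-pair vanishing. The correct route is to lift the class $h^{*}\delta$ along $k_{*}:\mathbb{E}(T_C,T_A)\rightarrow\mathbb{E}(T_C,A)$, whose cokernel embeds in $\mathbb{E}^{2}(T_C,F_A)$; killing that obstruction requires the \emph{hereditarity} of the cotorsion pair $(\mathcal{M}^{L},\mathcal{M}^{R})$ (i.e., $\mathbb{E}^{i}(\mathcal{M}^{L},\mathcal{M}^{R})=0$ for $i\geq 2$, available via \cite[Theorem 5.7]{Ad}), not just the degree-one vanishing you cite. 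Even granting that, assembling the lifted extension into a full $3\times 3$ diagram with middle column an $\mathbb{E}$-triangle $F_B\rightarrow T_B\rightarrow B$ and top row $F_A\rightarrow F_B\rightarrow F_C$ is a nontrivial exercise in $\rm (ET4)$/$\rm (ET4)^{op}$ that you declare to be the main obstacle but do not carry out; since the whole proof reduces to it, the argument is incomplete as written.

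For comparison, the paper sidesteps the horseshoe entirely. It resolves $B$ (not $A$ and $C$), applies $\rm (ET4)^{op}$ to get $F\rightarrow K\rightarrow A$ and $K\rightarrow T\rightarrow C$, and first proves additivity in the two special cases $C\in\mathcal{M}^{L}$ and $A\in\mathcal{M}^{R}$, where $K$ automatically lands in $\mathcal{M}^{L}$ (closure under cocones) or $\mathcal{M}^{R}$ (closure under extensions). The general case is then reduced to these two claims by pulling back along a resolution $F'\rightarrow T'\rightarrow C$ of $C$ alone. This uses only degree-one vanishing and the closure properties, so it is strictly more economical than your route. Your endgame (additivity of $\ind^{L}_{\mathcal{M}}$ on $T_A\rightarrow T_B\rightarrow T_C$ and of $\ind^{R}_{\mathcal{M}}$ on $F_A\rightarrow F_B\rightarrow F_C$ via Proposition \ref{L-7}, then subtracting) is correct once the diagram exists, and Lemma \ref{L-3-1}(4) does dispose of well-definedness; but you should either prove the hereditary horseshoe lemma in the extriangulated setting or adopt the paper's reduction.
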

		\begin{proof} Let $A\stackrel{}{\longrightarrow}B\stackrel{}{\longrightarrow}C\stackrel{}\dashrightarrow$ be an $\mathbb{E}$-triangle in $\mathscr{C}$. It suffices to show that $\mathbb{R}(B)=\mathbb{R}(A)+\mathbb{R}(C)$. Since $(\mathcal{M}^{L},\mathcal{M}^{R})$ is a cotorsion pair, there exists an $\mathbb{E}$-triangle
			$$ F\stackrel{}{\longrightarrow}T\stackrel{}{\longrightarrow}B\stackrel{}\dashrightarrow$$
			such that $T\in\mathcal{M}^{L}$ and $F\in\mathcal{M}^{R}$. Then ${\rm (ET4)^{op}}$ yields the following commutative diagram
			\begin{equation*}
				\xymatrix{
					F\ar[r]^{} \ar@{=}[d]_{}& K \ar[r]^{}\ar[d] &  A\ar@{-->}[r]^{} \ar[d]_{} &  \\
					F\ar[r]^{} & T \ar[r]^{}\ar[d] &  B\ar@{-->}[r]^{}\ar[d] &  \\
					&  C \ar@{=}[r] \ar@{-->}[d]^{}& C\ar@{-->}[d]^{}&\\
					&  &  &  }
			\end{equation*}
			
			$\mathbf{Claim~1}.$ If $C\in\mathcal{M}^{L}$, then $\mathbb{R}(B)=\mathbb{R}(A)+\mathbb{R}(C)$.
			
			Since $\mathcal{M}^{L}$ is closed under cocones, we have $K\in\mathcal{M}^{L}$ and thus $\mathbb{R}(A)=\ind^{L}_{\mathcal{M}}(K)-\ind^{R}_{\mathcal{M}}(F)$. Using Proposition \ref{L-7}(1) together with  Lemma \ref{L-3-1}(1), we have
			$$\mathbb{R}(C)=\ind^{L}_{\mathcal{M}}(C)=\ind^{L}_{\mathcal{M}}(T)-\ind^{L}_{\mathcal{M}}(K).$$
			The observation above implies that
			$$\mathbb{R}(B)=\ind^{L}_{\mathcal{M}}(T)-\ind^{R}_{\mathcal{M}}(F)=(\mathbb{R}(C)+\ind^{L}_{\mathcal{M}}(K))-(\ind^{L}_{\mathcal{M}}(K)-\mathbb{R}(A))=\mathbb{R}(A)+\mathbb{R}(C).$$
			
			$\mathbf{Claim~2}.$ If $A\in\mathcal{M}^{R}$, then $\mathbb{R}(B)=\mathbb{R}(A)+\mathbb{R}(C)$.
			
			Since $\mathcal{M}^{R}$ is closed under extensions, we have $K\in\mathcal{M}^{R}$ and thus $\mathbb{R}(C)=\ind^{L}_{\mathcal{M}}(T)-\ind^{R}_{\mathcal{M}}(K)$. Using Proposition \ref{L-7}(2) together with  Lemma \ref{L-3-1}(2), we have
			$$\mathbb{R}(A)=\ind^{R}_{\mathcal{M}}(A)=\ind^{R}_{\mathcal{M}}(K)-\ind^{R}_{\mathcal{M}}(F).$$
			Then we get
			$$\mathbb{R}(B)=\ind^{L}_{\mathcal{M}}(T)-\ind^{R}_{\mathcal{M}}(F)=(\mathbb{R}(C)+\ind^{R}_{\mathcal{M}}(K))-(\ind^{R}_{\mathcal{M}}(K)-\mathbb{R}(A))=\mathbb{R}(A)+\mathbb{R}(C).$$
			
			Now, we consider the general case. We take an $\mathbb{E}$-triangle
			$$ F'\stackrel{}{\longrightarrow}T'\stackrel{}{\longrightarrow}C\stackrel{}\dashrightarrow$$
			such that $T'\in\mathcal{M}^{L}$ and $F'\in\mathcal{M}^{R}$. Consider the following commutative diagram
			\begin{equation*}
				\xymatrix{
					&F'\ar@{=}[r]^{}\ar[d] &  F' \ar[d]_{} &  \\
					A\ar[r]^{} \ar@{=}[d]^{}& Q \ar[r]^{}\ar[d] &  T'\ar@{-->}[r]^{}\ar[d] &  \\
					A\ar[r]^{} &  B \ar[r] \ar@{-->}[d]^{}& C\ar@{-->}[d]^{}\ar@{-->}[r]^{}&\\
					&  &  &  }
			\end{equation*}
			Then we have
			\begin{align*}
				\mathbb{R}(B)&=\mathbb{R}(Q)-\mathbb{R}(F')=\mathbb{R}(Q)-\ind^{R}_{\mathcal{M}}(F')~~~~&\text{(By~Claim 2 and Lemma \ref{L-3-1}(2))}\\
				&=\ind^{L}_{\mathcal{M}}(T')+\mathbb{R}(A)-\ind^{R}_{\mathcal{M}}(F')~~~~&\text{(By Claim 1 and Lemma \ref{L-3-1}(1))}\\
				&=\mathbb{R}(A)+\mathbb{R}(C).
			\end{align*}
			This completes the proof.
		\end{proof}
		
		Now, we can state the main result of this section. 
		
		\begin{theorem}\label{main2}
			Let $\mathcal{M}$ be a silting subcategory in  $\mathscr{C}$. There  are inverse group isomorphisms
			\[
			\begin{tikzcd}[column sep =40.5, row sep =40.5]
				K_{0}(\mathscr{C}) \rar[rightarrow,"\mathbb{L}","\cong"']  &   K_{0}^{\rm sp}(\mathcal{M})&K_{0}(\mathscr{C}). \lar[rightarrow,"\mathbb{R}"',"\cong"]
			\end{tikzcd}
			\]
			In particular, all silting subcategories in $\mathscr{C}$ have the same number of indecomposable objects up to isomorphisms.
		\end{theorem}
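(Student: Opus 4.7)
The plan is to exhibit the natural inclusion map $\iota\colon K_{0}^{\rm sp}(\mathcal{M})\to K_{0}(\mathscr{C})$, $[M]\mapsto[M]$, which is well-defined because the split relations imposed on $K_{0}^{\rm sp}(\mathcal{M})$ continue to hold in $K_{0}(\mathscr{C})$, and to show that $\iota$ is a two-sided inverse to both $\mathbb{R}$ and $\mathbb{L}$. By Proposition \ref{P-3-1}, $\mathbb{R}$ already descends to a homomorphism on $K_{0}(\mathscr{C})$; the dual assertion for $\mathbb{L}$ follows by the same argument.

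For the composition $\mathbb{R}\circ\iota$: if $M\in\mathcal{M}$, then $M\in\mathcal{M}^{L}$, so Lemma \ref{L-3-1}(1) gives $\mathbb{R}(M)=\ind^{L}_{\mathcal{M}}(M)$, and the length-zero filtration yields $\ind^{L}_{\mathcal{M}}(M)=[M]$. The dual computation via Lemma \ref{L-3-1}(2) settles $\mathbb{L}\circ\iota=\id$.

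For the composition $\iota\circ\mathbb{R}$: given $X\in\mathscr{C}$, fix an $\mathbb{E}$-triangle $F_{X}\to T_{X}\to X\dashrightarrow$ as in Definition \ref{D-4-2}, so that $[X]=[T_{X}]-[F_{X}]$ in $K_{0}(\mathscr{C})$ and $\mathbb{R}(X)=\ind^{L}_{\mathcal{M}}(T_{X})-\ind^{R}_{\mathcal{M}}(F_{X})$. It then suffices to establish the telescoping identities $\iota(\ind^{L}_{\mathcal{M}}(T_{X}))=[T_{X}]$ and $\iota(\ind^{R}_{\mathcal{M}}(F_{X}))=[F_{X}]$ in $K_{0}(\mathscr{C})$: unfolding an $\mathcal{M}^{L}$- (resp.\ $\mathcal{M}^{R}$-) filtration into its defining $\mathbb{E}$-triangles $K_{i+1}\to X_{i}\to K_{i}\dashrightarrow$ and summing the Euler relations with alternating signs collapses $\sum_{i=0}^{t}(-1)^{i}[X_{i}]$ to $[T_{X}]$ (resp.\ $[F_{X}]$). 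The dual computation disposes of $\iota\circ\mathbb{L}=\id$, and in particular forces $\mathbb{L}=\mathbb{R}$.

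For the final clause, the isomorphism $K_{0}^{\rm sp}(\mathcal{M})\cong K_{0}(\mathscr{C})$ is independent of the silting subcategory $\mathcal{M}$, and $K_{0}^{\rm sp}(\mathcal{M})$ is freely generated by the isomorphism classes in $\ind(\mathcal{M})$ in the Krull--Schmidt setting, so the cardinality of $\ind(\mathcal{M})$ is forced to be the same for every silting $\mathcal{M}$. The only delicate step is the telescoping identity, whose well-definedness on all of $\mathcal{M}^{L}$ requires the full $\infty$-rigidity of $\mathcal{M}$ via Remark \ref{R-2-6}; the rest of the argument is formal.
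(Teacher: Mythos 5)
Your proposal is correct and takes essentially the same route as the paper: well-definedness of $\mathbb{R}$ via Proposition \ref{P-3-1}, followed by checking that $\mathbb{R}$ and the natural map $K_{0}^{\rm sp}(\mathcal{M})\to K_{0}(\mathscr{C})$ are mutually inverse, using $\mathbb{R}(M)=[M]$ for $M\in\mathcal{M}$ in one direction and the telescoped Euler relations $\ind^{L}_{\mathcal{M}}(T)=[T]$, $\ind^{R}_{\mathcal{M}}(F)=[F]$ in $K_{0}(\mathscr{C})$ in the other. Your closing caveat that the count of indecomposables relies on $K_{0}^{\rm sp}(\mathcal{M})$ being free on $\ind(\mathcal{M})$ (a Krull--Schmidt-type hypothesis) is a fair observation that the paper's statement glosses over.
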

		\begin{proof} By proposition \ref{P-3-1}, the homomorphism $\mathbb{R}:K_{0}(\mathscr{C})\longrightarrow K_{0}^{\rm sp}(\mathcal{M})$
			is well-defined. Let $i: K_{0}^{\rm sp}(\mathcal{M})\rightarrow K_{0}(\mathscr{C})$ be the natural projection functor. For any $X\in\mathscr{C}$, we take an $\mathbb{E}$-triangle $F\stackrel{}{\longrightarrow} T\stackrel{}{\longrightarrow}X\stackrel{}\dashrightarrow$ with  $T\in\mathcal{M}^{L}$ and $F\in\mathcal{M}^{R}$. Recall that $\ind^{L}_{\mathcal{M}}(T)=[T]~\text{and}~\ind^{R}_{\mathcal{M}}(F)=[F]$ in $K_{0}(\mathscr{C})$. It means that 
			$$\mathbb{R}(X)=\ind^{L}_{\mathcal{M}}(T)-\ind^{R}_{\mathcal{M}}(F)=[T]-[F]~\text{in}~ K_{0}^{\rm sp}(\mathcal{M}).$$
Thus $i\mathbb{R}(X)=[T]-[F]=[X]\in	K_{0}(\mathscr{C})$. Conversely, we have $\mathbb{R}i(M)=[M]$ for any $M\in\mathcal{M}$. A similar argument shows that $\mathbb{L}:K_{0}(\mathscr{C})\longrightarrow K_{0}^{\rm sp}(\mathcal{M})$ is an isomorphism.
		\end{proof}

		\begin{remark} \label{R-4-6}
			$(1)$ By Theorem \ref{main} and Theorem \ref{main2}, we have the following commutative diagram and the homomorphisms are bijections.
			
			\[
			\begin{tikzcd}[column sep =40.5, row sep =40.5]
				K_{0}(\mathscr{C})   \dar["\mathbb{R}",shift left] \rar["\mathbb{R}" ,shift left]  &  K_{0}^{\rm sp}(\mathcal{M}) \dar[twoheadrightarrow,"\pi_{3}",shift left] \rar[twoheadrightarrow, "\pi_{2}"',shift right]
				\lar[twoheadrightarrow, "\pi_{1}",shift left]&	K_{0}(\mathscr{C})\lar["\mathbb{L}"' ,shift right] \\
				K_{0}(\mathcal{M}^{L})\rar["{\rm ind}^{L}_{\mathcal{M}}",shift left]\uar[hookrightarrow,"i",shift left] &     K_{0}(\mathcal{M}^{R})\lar["{\rm ind}^{R}_{\mathcal{M}}",shift left]\uar["{\rm ind}^{R}_{\mathcal{M}}",shift left]&
			\end{tikzcd}
			\]
			where $i$ is an inclusion and $\pi_{1},\pi_{2},\pi_{3}$ are natural projections. 
			
			$(2)$ The isomorphism  between $K_{0}(\mathscr{C})$ and $K_{0}^{\rm sp}(\mathcal{M})$ is well-known and it was proved by \cite[Theorem 5.3.1]{Bo}, \cite[Theorem 2.27]{Ai} and \cite[Theorem A]{Ch} in triangulated categories. These result was also proved by  \cite[Theorem 4.1]{Ad2} in  Krull-Schmidt extriangulated categories.

		\end{remark}
		
		Recall that there is a bijection between silting subcategories and bounded hereditary  cotorsion pairs.  By this, we can give another version of the Theorem \ref{main2}.

		\begin{corollary}\label{C-5-7} Let $(\mathcal{T},\mathcal{F})$ be a bounded hereditary  cotorsion pair in $\mathscr{C}$. Then
			$$K_{0}(\mathcal{T})\cong K_{0}(\mathcal{F})\cong K_{0}(\mathscr{C}) \cong K_{0}^{\rm sp}(\mathcal{T}\cap\mathcal{F}).$$
		\end{corollary}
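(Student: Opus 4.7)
The strategy is to reduce the statement entirely to the two main theorems already proved in this section, using the bijection recalled just before Definition~\ref{D-4-2} between silting subcategories and bounded hereditary cotorsion pairs. The first step I would take is to set $\mathcal{M}:=\mathcal{T}\cap\mathcal{F}$ and invoke \cite[Theorem 5.7]{Ad}: this says that $\mathcal{M}$ is a silting subcategory of $\mathscr{C}$, and moreover $\mathcal{T}=\mathcal{M}^{L}$ and $\mathcal{F}=\mathcal{M}^{R}$. Because every silting subcategory is $\infty$-rigid by definition, $\mathcal{M}$ satisfies the hypothesis of Theorem~\ref{main}(3).

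Next, I would apply Theorem~\ref{main}(3) to produce two isomorphisms
$$\ind^{L}_{\mathcal{M}}\colon K_{0}(\mathcal{T})=K_{0}(\mathcal{M}^{L})\xrightarrow{\ \cong\ } K_{0}^{\rm sp}(\mathcal{M}),\qquad \ind^{R}_{\mathcal{M}}\colon K_{0}(\mathcal{F})=K_{0}(\mathcal{M}^{R})\xrightarrow{\ \cong\ } K_{0}^{\rm sp}(\mathcal{M}).$$
For this step to be formally clean, I would check that the Grothendieck groups $K_{0}(\mathcal{T})$ and $K_{0}(\mathcal{F})$ on the left are computed with respect to the extriangulated structure inherited from $\mathscr{C}$; this is routine because $\mathcal{T}$ and $\mathcal{F}$ are each closed under extensions in $\mathscr{C}$, being the two halves of a cotorsion pair, and the $\mathbb{E}$-triangles with all terms in $\mathcal{T}$ (resp. $\mathcal{F}$) are exactly those involved in the definitions of $\mathcal{M}^{L}$ (resp. $\mathcal{M}^{R}$) as filtered by $\mathcal{M}$.

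Finally, Theorem~\ref{main2} supplies the third isomorphism $K_{0}(\mathscr{C})\cong K_{0}^{\rm sp}(\mathcal{M})=K_{0}^{\rm sp}(\mathcal{T}\cap\mathcal{F})$. Assembling the three isomorphisms through the common group $K_{0}^{\rm sp}(\mathcal{M})$ gives the desired chain
$$K_{0}(\mathcal{T})\cong K_{0}(\mathcal{F})\cong K_{0}(\mathscr{C})\cong K_{0}^{\rm sp}(\mathcal{T}\cap\mathcal{F}).$$
Since the corollary is a direct packaging of Theorems~\ref{main}(3) and~\ref{main2} under the silting/cotorsion-pair correspondence, I do not anticipate any genuine obstacle; the only point deserving a line of justification is the identification of the two extriangulated structures on $\mathcal{T}$ and $\mathcal{F}$, which is immediate from extension-closedness.
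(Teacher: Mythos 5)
Your proposal is correct and is essentially the paper's own argument: the paper proves the corollary by citing \cite[Theorem 5.7]{Ad} together with Remark \ref{R-4-6}(1), and that remark is precisely the packaging of Theorem \ref{main} and Theorem \ref{main2} that you assemble explicitly via $\mathcal{M}=\mathcal{T}\cap\mathcal{F}$, $\mathcal{T}=\mathcal{M}^{L}$, $\mathcal{F}=\mathcal{M}^{R}$.
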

		\begin{proof} This follows from \cite[Theorem 5.7]{Ad} and Remark \ref{R-4-6}(1).
		\end{proof}
		
		\begin{remark}\label{R-4-8}  We note that a hereditary cotorsion pair is  precisely  a {\em co-t-structure} (also known as {\em weight structure}) in triangulated categories (cf. \cite[Example 4.1]{Ad}). Then Corollary \ref{C-5-7} recovers \cite[Theorem 5.3.1]{Bo} (see also \cite[Corollary 3.4]{Ch}).
		\end{remark}

\section{Grothendieck groups and  higher cluster tilting subcategories}
In this section,  we investigate the Grothendieck group $K_{0}(\mathscr{C})$ via a higher cluster subcategory $\mathcal{T}$ in $\mathscr{C}$. We establish several  group isomorphisms concerning $K_{0}(\mathscr{C})$ and $K_{0}^{\rm sp}(\mathcal{T})$. This generalizes and strengthens the corresponding results in various settings (see Remark \ref{R-5-5}).

We say a functorially finite subcategory $\mathcal{T}$ is {\em strong}  (cf. \cite[Definition 3.19]{Zh}) if for any $X\in\mathscr{C}$, there exists two $\mathbb{E}$-triangles
		$$ X\stackrel{f}{\longrightarrow}T\stackrel{}{\longrightarrow}T_{1}\stackrel{}\dashrightarrow~\text{and}~U\stackrel{}{\longrightarrow}U_{1}\stackrel{g}{\longrightarrow}X\stackrel{}\dashrightarrow$$
		such that $f$ is a left $\mathcal{T}$-approximation and $g$ is a right $\mathcal{T}$-approximation. When $\mathscr{C}$ has enough projective objects and injective obejcts, one can check that functorially finite subcategories are  strong.
		\begin{definition}\label{D-4-9}A subcategory $\mathcal{T}$ of $\mathscr{C}$ is called $d$-{\em cluster tilting} if
			$\mathcal{T}$ is strong functorially finite and $\mathcal{T}=\mathcal{T}^{\perp_{[1,d-1]}}={^{\perp_{[1,d-1]}}}\mathcal{T}$. We say an object $T\in\mathscr{C}$ is {\em $d$-cluster titling} if $\add T$ is a $d$-cluster tilting subcategory.
		\end{definition}
		In this section, we always assume that   $\mathcal{T}$ is a  $(d+1)$-cluster tilting subcategories of $\mathscr{C}$. It is obvious that $\mathcal{T}$ is $d$-rigid. The following result is analogous to \cite[Lemma 3.4]{Wl}.
		\begin{lemma}\label{L-3-10} We have $\mathscr{C}=\mathcal{T}^{d}=\mathcal{T}_{d}.$
		\end{lemma}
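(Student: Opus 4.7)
The plan is to build, for every $X \in \mathscr{C}$, an explicit $\mathcal{T}^{R}$-filtration of length $d$; the assertion $\mathscr{C}=\mathcal{T}^d$ follows by the completely dual construction using left $\mathcal{T}$-approximations, so I will concentrate on $\mathscr{C}=\mathcal{T}_d$. Since $\mathcal{T}$ is strong functorially finite, each object in $\mathscr{C}$ admits a right $\mathcal{T}$-approximation that fits into an $\mathbb{E}$-triangle, so I will iterate this construction: first take $K_1\to T_0\to X\dashrightarrow$ with $T_0\to X$ a right $\mathcal{T}$-approximation, then given $K_i$ take $K_{i+1}\to T_i\to K_i\dashrightarrow$ with $T_i\to K_i$ a right $\mathcal{T}$-approximation, stopping after $d$ steps.

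The heart of the argument is the inductive claim that $K_i\in\mathcal{T}^{\perp_{[1,i]}}$ for $1\le i\le d$. Applying $\Hom(\mathcal{T},-)$ to $K_{i+1}\to T_i\to K_i\dashrightarrow$ and using that $T_i\to K_i$ is a right $\mathcal{T}$-approximation, the map $\Hom(\mathcal{T},T_i)\to\Hom(\mathcal{T},K_i)$ is surjective, which together with $\mathbb{E}^1(\mathcal{T},T_i)=0$ (by $d$-rigidity of $\mathcal{T}$) gives $\mathbb{E}^1(\mathcal{T},K_{i+1})=0$. For $2\le j\le i+1\le d$, the same long exact sequence combined with $\mathbb{E}^{j-1}(\mathcal{T},T_i)=\mathbb{E}^{j}(\mathcal{T},T_i)=0$ yields the isomorphism $\mathbb{E}^{j}(\mathcal{T},K_{i+1})\cong\mathbb{E}^{j-1}(\mathcal{T},K_i)$, which vanishes by the inductive hypothesis. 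Hence the claim propagates.

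After exactly $d$ steps we obtain $K_d\in\mathcal{T}^{\perp_{[1,d]}}$. At this point the $(d+1)$-cluster tilting hypothesis $\mathcal{T}=\mathcal{T}^{\perp_{[1,d]}}$ kicks in to give $K_d\in\mathcal{T}$. Setting $Y_d:=K_d$ and $Y_i:=T_i$ for $0\le i\le d-1$, the $\mathbb{E}$-triangles produced at each step assemble, via composition of the deflations $T_i\twoheadrightarrow K_i\hookrightarrow T_{i-1}$, into an $\mathbb{E}$-triangle sequence
\[
0\longrightarrow Y_d\longrightarrow Y_{d-1}\longrightarrow\cdots\longrightarrow Y_0\longrightarrow X,
\]
which is precisely a $\mathcal{T}^{R}$-filtration of $X$ of length $d$. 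This shows $X\in\mathcal{T}_d$. The completely dual argument, starting from a left $\mathcal{T}$-approximation $X\to T^0$ (which exists by strong functorial finiteness) and using ${}^{\perp_{[1,d]}}\mathcal{T}=\mathcal{T}$, produces a $\mathcal{T}^{L}$-filtration of length $d$, giving $X\in\mathcal{T}^d$.

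The main technical point to watch is the propagation of the perpendicularity: one must carefully use the right long exact sequence (on the $\Hom(\mathcal{T},-)$ side for right approximations, and on $\Hom(-,\mathcal{T})$ for left approximations) and appeal to $d$-rigidity at each step. Once that inductive step is nailed down, the assembly into an $\mathbb{E}$-triangle sequence is just bookkeeping, and the equality $\mathcal{T}^{\perp_{[1,d]}}=\mathcal{T}={}^{\perp_{[1,d]}}\mathcal{T}$ provides the closure of the induction at length $d$.
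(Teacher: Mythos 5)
Your proposal is correct and follows essentially the same route as the paper: iterate $\mathcal{T}$-approximation triangles, propagate the vanishing of extensions along the long exact sequences using $d$-rigidity, and invoke $\mathcal{T}^{\perp_{[1,d]}}=\mathcal{T}={}^{\perp_{[1,d]}}\mathcal{T}$ after $d$ steps to close the filtration. The only cosmetic difference is that you spell out the right-approximation half and dualize for the left, whereas the paper does the reverse (and phrases the inductive step as a degree shift $\mathbb{E}^{i+1}(K_1,\mathcal{T})\cong\mathbb{E}^{i}(X,\mathcal{T})$ rather than your accumulating claim $K_i\in\mathcal{T}^{\perp_{[1,i]}}$, which are equivalent).
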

		\begin{proof} Since $\mathcal{T}$ is strong functorially finite, each object $X$ in $\mathscr{C}$ admits an $\mathbb{E}$-triangle
			$$ X\stackrel{f}{\longrightarrow}T_{0}\stackrel{}{\longrightarrow}K_{1}\stackrel{}\dashrightarrow$$
			such that $f$ is a left $\mathcal{T}$-approximation. It is straightforward to check that $K_{1}\in{^{\perp_{1}}}\mathcal{T}$ and $\mathbb{E}^{i+1}(K_1,\mathcal{T})\cong\mathbb{E}^{i}(X,\mathcal{T})$ for any $1\leq i\leq d-1$. We now repeat the argument to the object $K_{1}$. Then we obtain an $\mathbb{E}$-triangle
			sequence \begin{equation*}
				X\stackrel{}{\longrightarrow}T_{0}\stackrel{}{\longrightarrow}\cdots \stackrel{}{\longrightarrow}T_{d-1}\stackrel{}{\longrightarrow}T_{d}
			\end{equation*}
			such that $T_{i}\in \mathcal{T}$ for $1\leq i\leq d-1$. In particular, we have $T_{d}\in\mathcal{T}^{\perp_{[1,d]}}=\mathcal{T}$. This shows that $\mathscr{C}=\mathcal{M}^{d}$. A similarly argument shows that $\mathscr{C}=\mathcal{M}_{d}$.
		\end{proof}
		\begin{remark}
			By	Lemma \ref{L-3-10}, we obtain that $\mathscr{C}=\mathcal{T}^{d}=\mathcal{T}_{d}=\mathcal{T}^{L}=\mathcal{T}^{R}.$
		\end{remark}
		
	Now, we consider two  subfunctors $\mathbb{E}_{\mathcal{T}}$ and  $\mathbb{E}^{\mathcal{T}}$ defined as follows:
		$$\mathbb{E}_{\mathcal{T}}(C,A)=\{\delta\in\mathbb{E}(C,A)~|~(\delta_{\sharp})_{T}=0~\text{for~any}~T\in\mathcal{T}\},$$
		$$\mathbb{E}^{\mathcal{T}}(C,A)=\{\delta\in\mathbb{E}(C,A)~|~(\delta^{\sharp})_{T}=0~\text{for~any}~T\in\mathcal{T}\}.$$
		These functors induces two  {\em relative} extriangulated categories $(\mathscr{C},\mathbb{E}_{\mathcal{T}},\mathfrak{s}|_{\mathbb{E}_{\mathcal{T}}})$ and  $(\mathscr{C},\mathbb{E}^{\mathcal{T}},\mathfrak{s}|_{\mathbb{E}^{\mathcal{T}}})$ (c.f. \cite[Proposition 3.17]{He}).
		The Grothendieck group $K_{0}(\mathscr{C},\mathbb{E}_{\mathcal{T}},\mathfrak{s}|_{\mathbb{E}_{\mathcal{T}}})$  is defined as
		$$K_{0}(\mathscr{C},\mathbb{E}_{\mathcal{T}},\mathfrak{s}|_{\mathbb{E}_{\mathcal{T}}}):=K_{0}^{\rm sp}(\mathscr{C})/\lr{[A]-[B]+[C]~|~A\rightarrow B\rightarrow C\dashrightarrow\text{is an $\mathbb{E}_{\mathcal{T}}$-triangle}}.$$
		Similarly, one can define  $K_{0}(\mathscr{C},\mathbb{E}^{\mathcal{T}},\mathfrak{s}|_{\mathbb{E}^{\mathcal{T}}})$.

		Now we can state the main result of this section.
		
		\begin{theorem}\label{main3}  Let $\mathcal{T}$ and $\mathcal{U}$ be two $(d+1)$-cluster tilting subcategories of $\mathscr{C}$.
			
			$(1)$ If $d=2$,  there are inverse group isomorphisms
			\[
			\begin{tikzcd}[column sep =40.5, row sep =40.5]
				{K_{0}^{\rm sp}(\mathcal{T})}  \rar["{\rm ind}^{ R}_{\mathcal{U}}" ,shift left]  & {K_{0}^{\rm sp}(\mathcal{U})}.
				\lar[ "{\rm ind}^{ L}_{\mathcal{T}}",shift left]
			\end{tikzcd}
			\] 	
			
			$(2)$ If $1\leq t<d$,  then we have group isomorphisms
			\[
			\begin{tikzcd}[column sep =40.5, row sep =40.5]
				K_{0}(\mathcal{T}^{t})  \rar[rightarrow,"\ind^{L}_{\mathcal{T}}","\cong"']  &   K_{0}^{\rm sp}(\mathcal{T})&K_{0}(\mathcal{T}_{t}). \lar[rightarrow,"\ind^{R}_{\mathcal{T}}"',"\cong"]
			\end{tikzcd}
			\]		
			
			$(3)$ We have group isomorphisms
			\[
			\begin{tikzcd}[column sep =40.5, row sep =40.5]
				K_{0}(\mathscr{C})  \rar[rightarrow,"\ind^{L}_{\mathcal{T}}","\cong"']  &   K_{0}^{\rm in}(\mathcal{T})
				&  	K_{0}(\mathscr{C})  \lar[rightarrow,"\ind^{R}_{\mathcal{T}}"',"\cong"].
			\end{tikzcd}
			\]

			$(4)$ There  are inverse group isomorphisms
			\[
			\begin{tikzcd}[column sep =40.5, row sep =40.5]
				K_{0}(\mathscr{C},\mathbb{E}^{\mathcal{T}},\mathfrak{s}|_{\mathbb{E}^{\mathcal{T}}})
				\rar["{\rm ind}^{L}_{\mathcal{T}}" ,shift left]  & {K_{0}^{\rm sp}(\mathcal{T})}
				\lar["i_1",shift left]\rar["i_2"',shift right]
				&   	K_{0}(\mathscr{C},\mathbb{E}_{\mathcal{T}},\mathfrak{s}|_{\mathbb{E}_{\mathcal{T}}})\lar["{\rm ind}^{ R}_{\mathcal{T}}"',shift right]	
			\end{tikzcd}
			\]
			where $i_1,i_2$ are natural inclusions.

		\end{theorem}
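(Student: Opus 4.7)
The plan is to deduce parts (2) and (3) directly from Theorem \ref{main} and Lemma \ref{L-3-10}, then prove (4) by checking that the indices descend to the relative Grothendieck groups, and finally tackle (1) as the most delicate of the four assertions.

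For (2), observe that a $(d+1)$-cluster tilting subcategory is automatically $d$-rigid, so Theorem \ref{main}(1) applied with $\mathcal{M}=\mathcal{T}$ and $1\leq t<d$ immediately yields the isomorphisms $K_0(\mathcal{T}^t)\cong K_0^{\rm sp}(\mathcal{T})\cong K_0(\mathcal{T}_t)$. For (3), Lemma \ref{L-3-10} gives $\mathscr{C}=\mathcal{T}^d=\mathcal{T}_d$, so Theorem \ref{main}(2) furnishes $\ind^L_{\mathcal{T}}\colon K_0(\mathscr{C})\xrightarrow{\cong}K_0^{\rm in}(\mathcal{T})$ and $\ind^R_{\mathcal{T}}\colon K_0(\mathscr{C})\xrightarrow{\cong}K_0^{\rm in}(\mathcal{T})$.

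For (4), I would prove that $\ind^L_{\mathcal{T}}$ descends from $K_0^{\rm sp}(\mathscr{C})$ to $K_0(\mathscr{C},\mathbb{E}^{\mathcal{T}},\mathfrak{s}|_{\mathbb{E}^{\mathcal{T}}})$, and that the natural inclusion $i_1$ is its inverse. For any $\mathbb{E}^{\mathcal{T}}$-triangle $A\xrightarrow{f}B\to C\dashrightarrow$, all terms lie in $\mathscr{C}=\mathcal{T}^d$, so Proposition \ref{L-3-11}(1) yields
\[\ind^L_{\mathcal{T}}(A)-\ind^L_{\mathcal{T}}(B)+\ind^L_{\mathcal{T}}(C)=\Psi(\coker(G_{\mathcal{T}}(f))).\]
Since $\delta\in\mathbb{E}^{\mathcal{T}}(C,A)$ forces $(\delta^{\sharp})_T=0$ for every $T\in\mathcal{T}$, the long exact sequence from Proposition 2.4 identifies $\coker(G_{\mathcal{T}}(f))(T)$ with $\im((\delta^{\sharp})_T)=0$. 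Hence $\coker(G_{\mathcal{T}}(f))$ is the zero functor and the Euler relation holds in $K_0^{\rm sp}(\mathcal{T})$. Conversely, the $\mathcal{T}^L$-filtration of any $X\in\mathscr{C}$ constructed in Lemma \ref{L-3-10} is built from left $\mathcal{T}$-approximations, and this property forces each connecting class to lie in $\mathbb{E}^{\mathcal{T}}$; thus $[X]=\sum_{i}(-1)^i[T_i]$ already holds inside $K_0(\mathscr{C},\mathbb{E}^{\mathcal{T}},\mathfrak{s}|_{\mathbb{E}^{\mathcal{T}}})$, so $i_1$ is surjective and therefore inverse to $\ind^L_{\mathcal{T}}$. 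The dual argument using right $\mathcal{T}$-approximations and Proposition \ref{L-3-11}(2) yields the $\mathbb{E}_{\mathcal{T}}$-side with $\ind^R_{\mathcal{T}}$ and $i_2$.

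Finally (1) is the main obstacle. Well-definedness of $\ind^R_{\mathcal{U}}\colon K_0^{\rm sp}(\mathcal{T})\to K_0^{\rm sp}(\mathcal{U})$ and of $\ind^L_{\mathcal{T}}\colon K_0^{\rm sp}(\mathcal{U})\to K_0^{\rm sp}(\mathcal{T})$ follows from Remark \ref{R-2-6} at the boundary value $t=d=2$, since each $T\in\mathcal{T}$ lies in $\mathscr{C}=\mathcal{U}_2$ (and symmetrically for objects of $\mathcal{U}$ inside $\mathcal{T}^2$). The substantial task is to verify $\ind^L_{\mathcal{T}}\circ\ind^R_{\mathcal{U}}=\id_{K_0^{\rm sp}(\mathcal{T})}$. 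For $T\in\mathcal{T}$ with $\mathcal{U}^R$-filtration $0\to U_2\to U_1\to U_0\to T$, my plan is to apply Proposition \ref{L-3-11}(1) with $\mathcal{M}=\mathcal{T}$ to each of the two constituent $\mathbb{E}$-triangles $U_2\to U_1\to K_1\dashrightarrow$ and $K_1\to U_0\to T\dashrightarrow$, thereby expressing $\ind^L_{\mathcal{T}}(\ind^R_{\mathcal{U}}(T))-[T]$ as an alternating combination of $\Psi$-values of two cokernel functors. The hard part will be to show that these $\Psi$-contributions cancel in $K_0^{\rm sp}(\mathcal{T})$ rather than only in the quotient $K_0^{\rm in}(\mathcal{T})$; my strategy is to transport the argument through part (4), writing both $[T]$ and $[U_0]-[U_1]+[U_2]$ inside $K_0(\mathscr{C},\mathbb{E}^{\mathcal{T}},\mathfrak{s}|_{\mathbb{E}^{\mathcal{T}}})$ after verifying that the two filtration triangles admit representatives in $\mathbb{E}^{\mathcal{T}}$, and then invoking the isomorphism $\ind^L_{\mathcal{T}}\circ i_1=\id$ established in (4). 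The chief technical point is therefore the compatibility of the $\mathcal{U}^R$-filtration with the $\mathbb{E}^{\mathcal{T}}$-relative structure when $d=2$.
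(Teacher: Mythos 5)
Your treatment of (2) and (3) coincides with the paper's: both follow at once from Theorem \ref{main} together with Lemma \ref{L-3-10}. Your argument for (4) is also correct and only cosmetically different from the paper's: you obtain additivity of $\ind^{L}_{\mathcal{T}}$ on $\mathbb{E}^{\mathcal{T}}$-triangles by combining Proposition \ref{L-3-11}(1) with the observation that $\coker(G_{\mathcal{T}}(f))(T)\cong\im((\delta^{\sharp})_{T})=0$, where the paper invokes Proposition \ref{L-7}(1) directly; and your identification of the filtration triangles of Lemma \ref{L-3-10} as $\mathbb{E}^{\mathcal{T}}$-triangles via the left-approximation property is equivalent to the paper's observation that their cones $K_i$ satisfy $\mathbb{E}(K_i,\mathcal{T})=0$.

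The genuine gap is in (1), and although you have located the difficulty correctly, the repair you propose would fail. Writing $\ind^{R}_{\mathcal{U}}(T)=[U_0]-[U_1]+[U_2]$ from a length-two filtration with constituent $\mathbb{E}$-triangles $P_0\to U_0\to T\dashrightarrow$ and $U_2\to U_1\to P_0\dashrightarrow$, the first triangle is indeed an $\mathbb{E}^{\mathcal{T}}$-triangle (its cone $T$ satisfies $\mathbb{E}(T,\mathcal{T})=0$ since $\mathcal{T}$ is rigid), but the second one is not in general: its cone is $P_0$, and the long exact sequence only exhibits $\mathbb{E}(P_0,T')$ as a quotient of $\mathbb{E}(U_0,T')$, which has no reason to vanish, nor is its connecting class annihilated by $(-)^{\sharp}|_{\mathcal{T}}$. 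Consequently the $\Psi$-contribution coming from the second triangle survives in $K_{0}^{\rm sp}(\mathcal{T})$ (it only dies in the quotient $K_{0}^{\rm in}(\mathcal{T})$), and your attempt to transport the computation through part (4) stalls exactly there. The paper avoids this entirely: following \cite[Theorem 3.13]{Wl} it uses that, for $d=2$, every $T\in\mathcal{T}$ admits a length-\emph{one} resolution $U_1\to U_0\to T\dashrightarrow$ with $U_0,U_1\in\mathcal{U}$, so that only a single triangle occurs, whose cone $T$ satisfies $\mathbb{E}(T,\mathcal{T})=0$; Proposition \ref{L-7}(1) then gives $\ind^{L}_{\mathcal{T}}(U_0)=\ind^{L}_{\mathcal{T}}(U_1)+[T]$ with no correction term, whence $\ind^{L}_{\mathcal{T}}(\ind^{R}_{\mathcal{U}}([T]))=[T]$. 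The existence of this two-term $\mathcal{U}$-resolution of objects of $\mathcal{T}$ (the reason the statement is restricted to $d=2$) is the substantive input missing from your proposal, and without it your verification of $\ind^{L}_{\mathcal{T}}\circ\ind^{R}_{\mathcal{U}}=\mathrm{id}$ does not close.
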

		\begin{proof} (1) This proof is analogous to \cite[Theorem 3.13(1)]{Wl}. We give a sketch here for the convenience of the reader.
			
			For any $[T]\in\mathcal{T}$, there exists an $\mathbb{E}$-triangle 	$ U_1\stackrel{}{\longrightarrow}U_{0}\stackrel{}{\longrightarrow}T\stackrel{}\dashrightarrow$
			such that each $U_i\in\mathcal{U}$. By Lemma \ref{L-7}(1), we have 
			$\ind^{L}_{\mathcal{T}}(U_0)=\ind^{L}_{\mathcal{T}}(U_1)+[T]$. It follows that
			$$\ind^{L}_{\mathcal{T}}(\ind^{R}_{\mathcal{U}}([T]))=\ind^{L}_{\mathcal{T}}([U_0]-[U_1])=[T].$$
			Dually, we have $\ind^{R}_{\mathcal{U}}(\ind^{L}_{\mathcal{T}}([U]))=[U]$ for any 
			$[U]\in\mathcal{U}$.
			
			$(2)$ It follows from Theorem \ref{main}(1).
			
			$(3)$ It follows from    Theorem \ref{main}(2) and Lemma \ref{L-3-10}.

			$(4)$  Let 	$ A\stackrel{}{\longrightarrow}B\stackrel{}{\longrightarrow}C\stackrel{}\dashrightarrow$ be an $\mathbb{E}^{\mathcal{T}}$-triangle. Then Lemma \ref{L-7}(1) implies that $\ind^{ L}_{\mathcal{T}}(A)-\ind^{ L}_{\mathcal{T}}(B)+\ind^{ L}_{\mathcal{T}}(C)=0$. This is gives a well-defined homomorphism
			$${\rm ind}^{ L}_{\mathcal{T}}:K_{0}(\mathscr{C},\mathbb{E}^{\mathcal{T}},\mathfrak{s}|_{\mathbb{E}^{\mathcal{T}}})\rightarrow{K_{0}^{\rm sp}(\mathcal{T})}.$$
			It  remains to check that  ${\rm ind}^{\rm L}_{\mathcal{T}}$ and $i_1$  are mutually inverse to each other. It is obvious that ${\rm ind}^{ L}_{\mathcal{T}}(i_1([T]))=[T]$ for any $T\in\mathcal{T}$. Take $X\in\mathscr{C}$. By Lemma \ref{L-3-10}, there exists an $\mathbb{E}$-triangle sequence
			\begin{equation*}
				\eta:	(X\stackrel{}{\longrightarrow}X_{0}\stackrel{}{\longrightarrow}\cdots \stackrel{}{\longrightarrow}X_{t-1}\stackrel{}{\longrightarrow}X_{t})\sim(K_{0},K_{1},\cdots,K_{t-2})
			\end{equation*}
			such that each $X_i\in\mathcal{T}$. Note that $\mathbb{E}(K_i,\mathcal{T})=0$ by Lemma \ref{L-A}(1). Thus $\eta$ is also an  $\mathbb{E}^{\mathcal{T}}$-triangle sequence in $K_{0}(\mathscr{C},\mathbb{E}^{\mathcal{T}},\mathfrak{s}|_{\mathbb{E}^{\mathcal{T}}})$. The observation above implies that $i_2({\rm ind}^{ L}_{\mathcal{T}}(X))=[X]$. A similar argument shows that
			$\ind^{R}_{\mathcal{T}}$ and $i_2$  are mutually inverse to each other.
		\end{proof}
We make some comments on how our work generalizes some known results.
		
		\begin{remark}\label{R-5-5} $(1)$ The assertions (1) and (3) in  Theorem \ref{main3} are analogous to \cite[Theorem 3.13]{Wl}.  By contrast,  we do not require that $\mathscr{C}$ be Krull-Schmidt and satisfy the condition (WIC). Theorem \ref{main3}(2) recovers \cite[Proposition 4.5]{Ch}. Theorem \ref{main3}(4) recovers \cite[Theorem 4.10]{Jo}.

			$(2)$ Let $\mathscr{C}$ be a triangulated category  with the suspension functor $\Sigma$. Assume that $\mathcal{T}$ is a $d$-cluster titling subcategory of $\mathcal{T}$ satisfying $\Sigma^{d}\mathcal{T}=\mathcal{T}$. Then $(\mathcal{T},\Sigma^{d},\pentagon)$ is a $(d+2)$-angulated category in the sense of \cite{Ge}. Here $\pentagon$ is the class of all $(d+2)$-angulated sequences
			\begin{equation*}
				X_{1}\stackrel{}{\longrightarrow}X_{2}\stackrel{}{\longrightarrow}\cdots \stackrel{}{\longrightarrow}X_{d+1}\stackrel{}{\longrightarrow}X_{d+2}{\longrightarrow}\Sigma^{d}X_{1}
			\end{equation*}
			such that \begin{equation*}
				X_{1}\stackrel{}{\longrightarrow}X_{2}\stackrel{}{\longrightarrow}\cdots \stackrel{}{\longrightarrow}X_{d+1}\stackrel{}{\longrightarrow}X_{d+2}
			\end{equation*}
			is an $\mathbb{E}$-triangle sequence. 	Then Remark \ref{R-3-13}  implies that  $K_{0}^{\rm in}(\mathcal{T})$ is precisely the Grothendieck group of $\mathcal{T}$ as $(d+2)$-angulated category in the sense of \cite[Definition 2.2]{FE}.
			Thus Theorem \ref{main3}(3) recovers \cite[Theorem C]{FE}.
		\end{remark}
		

		\section{Grothendieck groups and $d$-cluster categories }\label{Se4.3} 
		The aim of this section is to compute the Grothendieck groups of the $d$-cluster categories of type $A_n$ by using Theorem \ref{main3}. For this purpose, we review the definition of $d$-cluster categories and its geometric model. We refer to \cite{Th,Ba,Ja} for more details.	
		
		Let $Q$ be an acyclic quiver and let $D^{b}(\mod kQ)$ be the bounded derived category of the finite generate module category $\mod kQ$. The {\em  $d$-cluster category} $\mathcal{C}_{Q}^{d}$ is defined as the orbit category $$D^{b}(\mod kQ)/\tau^{-1}\Sigma^{d},$$
		where  $\tau$ is the Auslander-Reiten translation and $\Sigma$ is the suspension functor. It is known that  $\mathcal{C}_{Q}^{d}$ is Krull-Schmidt, $(d+1)$-Calabi-Yau, and triangulated.
		
		Let $\mathcal{C}_{A_{n}}^{d}$ be the $d$-cluster category of type $A_{n}$ and let $\Pi$ be a regular
		$(d(n+1)+2)$-gon.  Set $W=(d(n+1)+2)$. We label the vertices of $\Pi$ clockwise by $1,2,\cdots,W$. We regard all operations on vertices of $\Pi$ module $W$. A {\em $d$-diagonal} $(a,b)$ is a diagonal divide $\Pi$ into an $(sd+2)$-gon and  $(td+2)$-gon for some $s,t\in \mathbb{Z}_{\geq 1}$. If $a>b$, then a  diagonal $(a,b)$  is a  $d$-diagonal if and only if $(a-b)\equiv 1\mod d$. 
		
		\begin{proposition}\label{P-6-1} {\rm \cite[Proposition 5.4]{Ba}} There exists a bijection between $d$-diagonals in $\Pi$ and indecomposable objects in  $\mathcal{C}_{A_{n}}^{d}$.
		\end{proposition}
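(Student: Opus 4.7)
The plan is to produce the bijection explicitly and verify it by analysing the orbit structure of $\tau^{-1}\Sigma^{d}$ on $D^{b}(\mod kA_{n})$. First I would fix a parametrization of $\ind(D^{b}(\mod kA_{n}))$: by Happel's theorem every indecomposable has the form $\Sigma^{k}M_{[i,j]}$ for a unique triple $(k,i,j)\in\Z\times\{1,\dots,n\}^{2}$ with $i\leq j$, where $M_{[i,j]}$ is the interval module. Under this parametrization I would write down the actions of $\Sigma$ and of the AR translate $\tau$ in closed form (using the formula $\tau M_{[i,j]}=M_{[i-1,j-1]}$, extended across the boundary by a shift of $\Sigma$), so that the composite $\tau^{-1}\Sigma^{d}$ becomes an explicit translation on the set of triples.

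Next, I would define a map $\Phi$ from $\ind(D^{b}(\mod kA_{n}))$ to the set of $d$-diagonals of $\Pi$ by a formula of the form
$$\Phi\bigl(\Sigma^{k}M_{[i,j]}\bigr)=\bigl(a(i,j,k),\; b(i,j,k)\bigr)\pmod{W},$$
choosing the affine functions $a$ and $b$ (the natural guess for the undeformed case $k=0$ being something like $a=d(i-1)+1$, $b=dj+2$) so that $b-a\equiv 1\pmod d$ holds identically and so that replacing $(k,i,j)$ by its $\tau^{-1}\Sigma^{d}$-translate has the effect of adding a full multiple of $W$ to both coordinates. This ensures that the image is always a genuine $d$-diagonal and that $\Phi$ descends to a well-defined map on $\ind(\mathcal{C}_{A_{n}}^{d})$.

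Finally, I would prove bijectivity by combining a fundamental-domain argument with a direct count. Choose a fundamental domain $F$ in the $\mathbb{Z}A_{n}$-shape of the AR-quiver of $D^{b}(\mod kA_{n})$ for the action of $\tau^{-1}\Sigma^{d}$; each orbit hits $F$ exactly once. Injectivity of $\Phi$ on $F$ reduces to an elementary congruence check on the chosen $(a,b)$. For surjectivity, I would count $d$-diagonals of the $(d(n+1)+2)$-gon combinatorially (pairs $\{a,b\}$ with $b-a\equiv 1\pmod d$ that are not edges, up to the symmetry swapping $a$ and $b$) and show the total matches $|F|$.

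The main obstacle will be pinning down the normalization constants in $a(i,j,k), b(i,j,k)$ so that $\Phi$ is simultaneously (i) valued in $d$-diagonals, (ii) constant on $\tau^{-1}\Sigma^{d}$-orbits, and (iii) avoids landing on edges of $\Pi$. Once the correct constants are extracted from matching the AR-translate on one boundary case against rotation of $\Pi$, the rest of the verification is bookkeeping in modular arithmetic.
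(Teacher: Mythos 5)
Your outline is essentially the argument of Baur--Marsh, which is precisely what the paper relies on here: Proposition \ref{P-6-1} is quoted from \cite[Proposition 5.4]{Ba} without proof, and that proof proceeds exactly as you describe --- parametrize $\ind(D^{b}(\mod kA_{n}))$ by shifts of interval modules, write the action of $\tau^{-1}\Sigma^{d}$ on $\mathbb{Z}A_{n}$ in coordinates, match it with a rotation of $\Pi$, and confirm bijectivity by comparing the count of $d$-diagonals, $\tfrac{nW}{2}$ with $W=d(n+1)+2$, against the size of a fundamental domain. The one detail to watch in your step (ii) is that for odd $d$ the automorphism $\tau^{-1}\Sigma^{d}$ of $\mathbb{Z}A_{n}$ is a glide reflection rather than a pure translation (since $\Sigma$ involves the order-two diagram automorphism of $A_{n}$), so the $\tau^{-1}\Sigma^{d}$-translate may interchange the two endpoints $a,b$ in addition to shifting them by a multiple of $W$ --- harmless, because diagonals are unordered pairs, but it must be accounted for in the congruence bookkeeping.
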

		By Proposition \ref{P-6-1}, we can identify each indecomposable object in  $\mathcal{C}_{A_n}^{d}$ with a $d$-diagonal in $\Pi$.  A $(d+2)$-{\em angulation} of $\Pi$ is a maximal set of noncrossing $d$-diagonals. The following result is quiet useful.
		
		\begin{proposition}\label{P-6-2} {\rm \cite[Theorem 4.5]{Ja}} There exists a bijection between $(d+1)$-cluster titling objects in $\mathcal{C}_{A_{n}}^{d}$ and $(d+2)$-{\em angulation} of $\Pi$.
		\end{proposition}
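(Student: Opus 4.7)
The plan is to reduce the statement to a purely combinatorial assertion about $d$-diagonals via the Krull-Schmidt structure of $\mathcal{C}_{A_n}^{d}$ together with Proposition \ref{P-6-1} and Example \ref{E-1}(3). Since $\mathcal{C}_{A_n}^{d}$ is Krull-Schmidt, a subcategory closed under direct summands is determined by its set of indecomposable objects, and by Proposition \ref{P-6-1} this set is in bijection with a collection of $d$-diagonals in the polygon $\Pi$. Thus it suffices to show that, under this correspondence, the subcategories of the form $\add(T)$ for a $(d+1)$-cluster tilting object $T$ match exactly the $(d+2)$-angulations of $\Pi$.

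I would define the candidate map by sending a $(d+2)$-angulation $\mathcal{D}$ to $T_{\mathcal{D}}:=\bigoplus_{\delta\in\mathcal{D}} X_\delta$, where $X_\delta$ is the indecomposable associated to $\delta$, and the candidate inverse by sending a $(d+1)$-cluster tilting object $T$ to the set $\mathcal{D}_T$ of $d$-diagonals indexing the indecomposable summands of $T$. First, Example \ref{E-1}(3) identifies the $d$-rigidity of $\add(T_{\mathcal{D}})$ with the fact that no two diagonals of $\mathcal{D}$ cross, which is part of the definition of a $(d+2)$-angulation. Second, to upgrade $d$-rigidity to the $(d+1)$-cluster tilting condition $\add(T_{\mathcal{D}})=\add(T_{\mathcal{D}})^{\perp_{[1,d]}}={^{\perp_{[1,d]}}\add(T_{\mathcal{D}})}$, I would argue that any indecomposable $X_\delta$ not in $\add(T_{\mathcal{D}})$ corresponds to a $d$-diagonal $\delta\notin\mathcal{D}$; by the maximality of $\mathcal{D}$, the diagonal $\delta$ crosses some $\delta'\in\mathcal{D}$, and by Example \ref{E-1}(3) again this forces $\mathbb{E}^{i}(X_{\delta'},X_\delta)\neq0$ or $\mathbb{E}^{i}(X_\delta,X_{\delta'})\neq 0$ for some $1\leq i\leq d$, ruling out $X_\delta$ from the perpendicular subcategory. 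The reverse map is handled symmetrically: $\mathcal{D}_T$ is non-crossing by $d$-rigidity of $T$, and its maximality follows directly from the perpendicularity condition satisfied by $T$.

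The remaining ingredient is strong functorial finiteness in the sense of Definition \ref{D-4-9}, which is needed to conclude that $\add(T_{\mathcal{D}})$ is $(d+1)$-cluster tilting. Since $\mathcal{C}_{A_n}^{d}$ is Hom-finite and $\mathcal{D}$ is a finite set, $\add(T_{\mathcal{D}})$ has only finitely many indecomposables, and the existence of left and right approximations follows from the standard argument for covariantly and contravariantly finite subcategories in a Hom-finite Krull-Schmidt category; existence of $\mathbb{E}$-triangle realizations of these approximations is then automatic in a triangulated setting. The main obstacle I anticipate is the precise dictionary between geometric crossings and the degree in which the non-vanishing extension appears: to make the perpendicularity argument work one needs that if two $d$-diagonals cross, then the resulting extension really lives in $\mathbb{E}^{i}$ for some $i\in\{1,\ldots,d\}$ rather than in strictly higher degree. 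This is encoded in the refined computation of $\Hom$- and $\Ext$-spaces in $\mathcal{C}_{A_n}^{d}$ via crossing numbers of diagonals, and although Example \ref{E-1}(3) packages exactly what is needed, verifying it rigorously requires the explicit description of irreducible morphisms and the Auslander--Reiten structure of $\mathcal{C}_{A_n}^{d}$ from \cite{Ba,Ja}.
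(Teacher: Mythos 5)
The paper does not actually prove this proposition: it is quoted verbatim from \cite[Theorem 4.5]{Ja}, so there is no internal argument to compare yours against. Your sketch is essentially the standard proof of that cited result, and I believe it is sound, with two points worth making explicit. First, the crux is exactly the step you flag: converting ``maximal noncrossing'' into the two perpendicularity conditions of Definition \ref{D-4-9}. The crossing criterion of \cite[Proposition 2]{Th}, as quoted in Example \ref{E-1}(3), tells you that a crossing produces $\mathbb{E}^{i}(X_{\delta},X_{\delta'})\neq 0$ for some $1\le i\le d$ \emph{or} the same in the opposite direction; to exclude $X_{\delta}$ from \emph{both} $\mathcal{T}^{\perp_{[1,d]}}$ and ${}^{\perp_{[1,d]}}\mathcal{T}$ you should invoke the $(d+1)$-Calabi--Yau property of $\mathcal{C}_{A_n}^{d}$, which identifies $\mathbb{E}^{i}(X,Y)$ with the dual of $\mathbb{E}^{d+1-i}(Y,X)$ and keeps $d+1-i$ in the range $[1,d]$, so your ``or'' becomes an ``and''. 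This is also precisely where the general caveat that maximal rigid objects need not be cluster tilting is circumvented: it is the ``if and only if'' form of the crossing criterion, not maximal rigidity alone, that closes the gap. Second, strong functorial finiteness is indeed automatic here, since $\mathcal{C}_{A_n}^{d}$ is Hom-finite Krull--Schmidt with finitely many indecomposables and any approximation in a triangulated category extends to a triangle. What your route buys is a self-contained argument assembled from ingredients the paper already quotes in Example \ref{E-1}(3); what the citation to \cite{Ja} buys is that the crossing/extension dictionary, which you correctly identify as the load-bearing input, is actually established there and in \cite{Ba,Th} rather than assumed.
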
 
		
		Let $\mathbb{Z}A_n$ be the stable translation quiver
		associated to $A_n$. As shown in \cite[Proposition 5.5]{Ba}, the Auslander-Reiten quiver of $\mathcal{C}_{A_{n}}^{d}$ is isomorphic to the quotient $\mathbb{Z}A_n/\tau^{-1}\Sigma^{d}$ (See Figure \ref{AR-1} and \ref{AR-2}). The main result of this section is the following.
		
		\begin{figure}[h]
			\centering
			\begin{tikzpicture}[scale=1.5][H]
				\draw[red](1,1.6)--(0,0);
				\draw (0,-0.2)  node{$\scriptstyle (1,d+2)$};
				\draw[very thick](0,1.6)--(8,1.6);
				\draw (0,0)  node{$\scriptstyle \bullet $ };
				\draw (1,1.6)  node{$\scriptstyle \bullet$};
				\draw (1.02,1.75)  node{$\scriptstyle (1,nd+2)$};
				\draw (0,-0.2);
				\draw (1,1.6)--(2.0,0) ;
				\draw (2.0,0)  node{$\scriptstyle \bullet$};
				\draw (1.6,-0.2)  node{$\scriptstyle (1+(n-1)d,nd+2)$};
				\draw (0.95,0.7) node{$\scriptstyle 1$};
				\draw (1.5,1.6)--(2.5,0) ;
				\draw (1.5,1.6)  node{$\scriptstyle \bullet$};
				\draw (2.5,0)  node{$\scriptstyle \bullet$};
				\draw (2.5,0)--(3.5,1.6) ;
				\draw (3.5,1.6)  node{$\scriptstyle \bullet$};
				\draw (2.5,0.7) node{$\scriptstyle 2$};
				\draw (3.5,0.7) node{$\scriptstyle \cdots\cdots$};
				\draw (3.6,0)--(4.6,1.6) ;
				\draw (3.6,0)  node{$\scriptstyle \bullet$};
				\draw (4.6,1.6)  node{$\scriptstyle \bullet$};
				\draw (4.6,1.6)--(5.6,0) ;		
				\draw (5.6,0)  node{$\scriptstyle \bullet$};
				\draw (4.6,0.7) node{$\scriptstyle d$};
				\draw (5.1,1.6)--(6.1,0) ;				
				\draw (5.1,1.6)  node{$\scriptstyle \bullet$};
				\draw (6.1,0) node{$\scriptstyle \bullet$};
				\draw (5.1,1.6)--(6.1,0) ;				
				\draw (5.1,1.6)  node{$\scriptstyle \bullet$};
				\draw (6.1,0) node{$\scriptstyle \bullet$};		
				\draw[very thick](0,0)--(8,0);
				\draw[red] (5.6,1.6)--(6.6,0) ;				
				\draw (5.6,1.6)  node{$\scriptstyle \bullet$};
				\draw (6.6,0) node{$\scriptstyle \bullet$};
				\draw (5.6,1.75)  node{$\scriptstyle (1,d+2)$};
				\draw (6.6,-0.2	)  node{$\scriptstyle (1,nd+2)$};
			\end{tikzpicture}
			\caption{The Auslander quiver of $\mathcal{C}_{A_{n}}^{d}$ when $d$ is odd.}
			\label{AR-1}
		\end{figure}
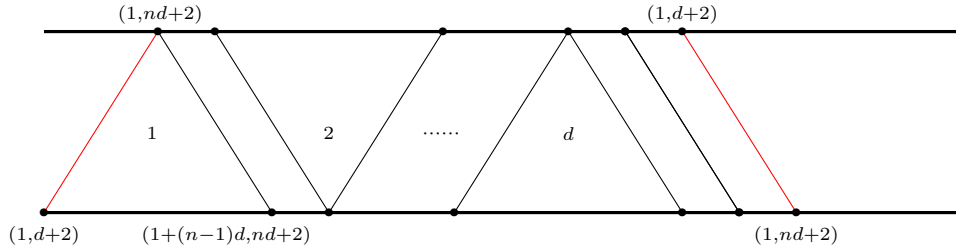
		
		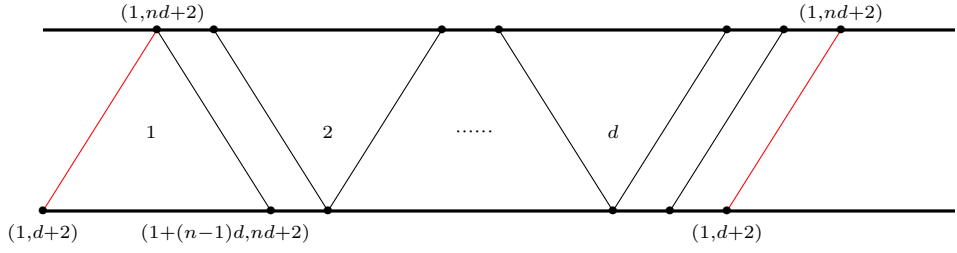
\begin{figure}[h]
			\centering
			\begin{tikzpicture}[scale=1.5][H]
				\draw[red](1,1.6)--(0,0);
				\draw (0,-0.2)  node{$\scriptstyle (1,d+2)$};
				\draw[very thick](0,1.6)--(8,1.6);
				\draw (0,0)  node{$\scriptstyle \bullet $ };
				\draw (1,1.6)  node{$\scriptstyle \bullet$};
				\draw (1.05,1.75)  node{$\scriptstyle (1,nd+2)$};
				\draw (0,-0.2);
				\draw (1,1.6)--(2.0,0) ;
				\draw (2.0,0)  node{$\scriptstyle \bullet$};
				\draw (1.6,-0.2)  node{$\scriptstyle (1+(n-1)d,nd+2)$};
				\draw (0.95,0.7) node{$\scriptstyle 1$};
				\draw (1.5,1.6)--(2.5,0) ;
				\draw (1.5,1.6)  node{$\scriptstyle \bullet$};
				\draw (2.5,0)  node{$\scriptstyle \bullet$};
				\draw (2.5,0)--(3.5,1.6) ;
				\draw (3.5,1.6)  node{$\scriptstyle \bullet$};
				\draw (2.5,0.7) node{$\scriptstyle 2$};
				\draw (3.8,0.7) node{$\scriptstyle \cdots\cdots$};

				\draw (4,1.6)--(5,0) ;
				\draw (4,1.6)  node{$\scriptstyle \bullet$};
				\draw (5,0)  node{$\scriptstyle \bullet$};
				\draw (5,0)--(6,1.6) ;
				\draw (6,1.6)  node{$\scriptstyle \bullet$};
				
				\draw (5,0.7) node{$\scriptstyle d$};
				\draw(5.5,0)--(6.5,1.6) ;
				\draw (6.5,1.6)  node{$\scriptstyle \bullet$};
				\draw (5.5,0)  node{$\scriptstyle \bullet$};
				\draw[red] (6,0)--(7,1.6) ;
				\draw (7,1.6)  node{$\scriptstyle \bullet$};
				\draw(6,0)  node{$\scriptstyle \bullet$};
				\draw (7,1.75)  node{$\scriptstyle (1,nd+2)$};
				\draw(6,-0.2)  node{$\scriptstyle (1,d+2)$};
				
				\draw[very thick](0,0)--(8,0);
				
			\end{tikzpicture}
			\caption{The Auslander quiver of $\mathcal{C}_{A_{n}}^{d}$ when $d$ is even.}
			\label{AR-2}
		\end{figure}
		\begin{theorem}\label{main5} Let $\mathcal{C}_{A_{n}}^{d}$ be the $d$-cluster category of type $A_{n}$. Then $$K_0(\mathcal{C}_{A_{n}}^{d})\cong\left\{
			\begin{aligned}
				\mathbb{Z}/(n+1)\mathbb{Z}, &~~\text{if $d$ is even,}\\
				\mathbb{Z}, &~~ \text{if $d$ and $n$ are odd,}\\
				0, & ~~\text{if $d$ is odd and $n$ is even}.
			\end{aligned}
			\right.
			$$
		\end{theorem}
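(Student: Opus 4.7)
The plan is to apply Theorem~\ref{main3}(3) with a well-chosen $(d+1)$-cluster tilting subcategory $\mathcal{T}$ of $\mathcal{C}_{A_n}^d$ and then compute the resulting quotient of $\mathbb{Z}^n$. I would take the \emph{fan} $\mathcal{T} = \add(T_1 \oplus \cdots \oplus T_n)$ with $T_k = (1,kd+2)$, which by Proposition~\ref{P-6-2} is indeed $(d+1)$-cluster tilting (the $T_k$'s form the $(d+2)$-angulation of $\Pi$ consisting of all $d$-diagonals through vertex~$1$). Theorem~\ref{main3}(3) then gives $K_0(\mathcal{C}_{A_n}^d) \cong K_0^{\rm in}(\mathcal{T}) = \mathbb{Z}\langle [T_1],\ldots,[T_n]\rangle / \im\Phi$.

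By Remark~\ref{R-3-13}, $\im\Phi$ is generated by alternating sums $\sum_{i=0}^{d+2}(-1)^i[X_i]$ coming from $\mathbb{E}$-triangle sequences of length $d+2$ with all terms in $\mathcal{T}$. Using the identification $\Sigma^d = \tau$ inside $\mathcal{C}_{A_n}^d = D^b(kA_n)/\tau^{-1}\Sigma^d$ and the geometric rule $\tau(a,b)=(a-1,b-1)$, I would enumerate these sequences explicitly via higher-cluster mutation. For an interior index $k\in\{2,\ldots,n-1\}$, the two $(d+2)$-gon faces adjacent to $T_k$ in the fan glue into a sub-$(2d+2)$-gon; re-$(d+2)$-angulating this subpolygon yields a length-$(d+2)$ $\mathbb{E}$-triangle sequence in $\mathcal{T}$ whose intermediate non-fan terms cancel in the alternating sum, leaving a ``three-term'' interior relation involving $[T_{k-1}], [T_k], [T_{k+1}]$. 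For the boundary indices $k\in\{1,n\}$ only one side of $T_k$ sits inside the fan, so the exchange sequence must involve $\Sigma^d T_k = \tau T_k$; this forces one to express $\tau T_1$ and $\tau T_n$, which are not in $\mathcal{T}$, via the AR quiver of $\mathcal{C}_{A_n}^d$ (Figure~\ref{AR-1} for odd $d$, Figure~\ref{AR-2} for even $d$).

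The remaining step is linear algebra on $\mathbb{Z}^n$. Combining the interior recursion with the boundary identifications presents $K_0(\mathcal{C}_{A_n}^d)$ as $\mathbb{Z}^n/R$, and direct computation yields the case split. For $d$ even the boundary is ``cylindrical'' (Figure~\ref{AR-2}), so the closure of the interior recursion produces the single torsion relation $(n+1)[T_1]=0$ and hence $K_0 \cong \mathbb{Z}/(n+1)\mathbb{Z}$. For $d$ odd the boundary carries a ``M\"obius twist'' (Figure~\ref{AR-1}), and consistency between the interior recursion and the twist depends on the parity of $n$: one gets $K_0 \cong \mathbb{Z}$ when $n$ is odd, and the twist forces the surviving generator to vanish, giving $K_0 = 0$, when $n$ is even.

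The main obstacle will be the boundary step. The interior relations follow cleanly from the local re-$(d+2)$-angulation picture, but the boundary relations require carefully tracking $\tau T_1$ and $\tau T_n$ (which are $d$-diagonals lying \emph{outside} the fan) back into $K_0^{\rm in}(\mathcal{T})$ via auxiliary $\mathbb{E}$-triangle sequences that witness their classes as integral combinations of the $[T_k]$'s. It is precisely this translation, governed by whether the AR quiver is a cylinder or a M\"obius strip, that produces the three cases in the theorem.
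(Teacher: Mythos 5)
Your high-level architecture coincides with the paper's: both proofs invoke Theorem~\ref{main3}(3) to reduce $K_0(\mathcal{C}_{A_n}^d)$ to $K_0^{\rm in}(\add T)$ for a concrete $(d+2)$-angulation, then derive relations among the $[T_k]$ and finish with linear algebra. The paper, however, does \emph{not} use the fan: it uses a zig-zag configuration $T_i=(\tfrac{i+1}{2}d+1,\,W-\tfrac{i-1}{2}d)$ (resp.\ $(\tfrac{i+2}{2}d,\,W-1-\tfrac{i-2}{2}d)$) chosen precisely so that suitable $\Sigma$-shifts of consecutive $T_i$ are Auslander--Reiten adjacent; each AR triangle $\Sigma^{j}T_i\to T_{i-1}\oplus T_{i+1}\to \Sigma^{j'}T_i\dashrightarrow$ then splices with zero maps into a $(d+3)$-angle \emph{all of whose terms lie in} $\add T\cup\{0\}$, yielding the explicit relations $[T_i]+(\pm 1)[T_{i-1}\oplus T_{i+1}]+(-1)^{d+2}[T_i]=0$.

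The genuine gap in your plan is the mechanism for producing relations with the fan. By Remark~\ref{R-3-13}, $\im\Phi$ is generated only by alternating sums over $\mathbb{E}$-triangle sequences \emph{every} term of which lies in $\add T$. A mutation/exchange $(d+2)$-angle between $T_k$ and its mutation $T_k^\ast$ has $T_k^\ast\notin\mathcal{T}$ as an end term, so it contributes no relation to $K_0^{\rm in}(\mathcal{T})$; and the phrase ``intermediate non-fan terms cancel in the alternating sum'' is not meaningful, since a sequence containing non-fan terms is simply not admissible. Moreover the AR-triangle device that rescues the paper fails for the fan: e.g.\ in $\mathcal{C}_{A_3}^2$ the AR triangle ending at the fan diagonal $(1,6)$ has middle term $(1,4)\oplus(9,6)$ with $(9,6)$ outside the fan, so you would need a genuinely different --- and currently unspecified --- supply of all-in-$\add T$ sequences before any linear algebra can begin. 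Two smaller points: in the paper's conventions it is $\Sigma$, not $\tau$, that acts by $(a,b)\mapsto(a-1,b-1)$ (one has $\tau=\Sigma^d$, i.e.\ $\tau(a,b)=(a-d,b-d)$), which matters for your boundary analysis of $\tau T_1,\tau T_n$; and the final ``cylinder vs.\ M\"obius'' case split is asserted without the explicit relations, so it cannot be checked as written.
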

		\begin{proof}  	Set $W=d(n+1)+2$. We  emphasize that all operations on vertices in regular $W$-gon module $W$. We divided	the proof into the following steps:
			
			$\mathbf{(Step~1)}$ For $1\leq i \leq n$, we take
			$$T_i=\left\{
			\begin{aligned}
				(\dfrac{i+1}{2}d+1,W-\dfrac{i-1}{2}d), &  &\text{if $i$ is odd;}~ \\
				(\dfrac{i+2}{2}d,W-1-\frac{i-2}{2}d), &  & \text{if $i$ is even.}~
			\end{aligned}
			\right.
			$$
			We first note that each $T_i$ is an indecomposable object in $\mathcal{C}_{A_{n}}^{d}$. Indeed, Proposition \ref{P-6-2} implies that~$T=\oplus_{i=1}^{n}T_i$ is a $(d+1)$-cluster titling object in $\mathcal{C}_{A_{n}}^{d}$ (See Figure \ref{F-1}).
			\begin{figure}[h] 
				\begin{center}

					\tikzset{every picture/.style={line width=0.75pt}} 
					
					\begin{tikzpicture}[x=0.75pt,y=0.75pt,yscale=-1,xscale=1]
						
						\draw   (192.8,133.17) .. controls (190.69,101.61) and (214.57,74.31) .. (246.13,72.21) .. controls (277.69,70.1) and (304.99,93.98) .. (307.09,125.54) .. controls (309.2,157.1) and (285.32,184.4) .. (253.76,186.5) .. controls (222.2,188.61) and (194.91,164.73) .. (192.8,133.17) -- cycle ;
						\draw [color={rgb, 255:red, 208; green, 2; blue, 27 }  ,draw opacity=1 ]   (233.05,74.35) -- (307.09,125.54) ;
						\draw [color={rgb, 255:red, 208; green, 2; blue, 27 }  ,draw opacity=1 ]   (216.48,82.38) -- (305.48,145.38) ;
						\draw [color={rgb, 255:red, 208; green, 2; blue, 27 }  ,draw opacity=1 ]   (205,94.93) -- (296.48,161.38) ;
						\draw [color={rgb, 255:red, 208; green, 2; blue, 27 }  ,draw opacity=1 ][fill={rgb, 255:red, 208; green, 2; blue, 27 }  ,fill opacity=1 ]   (192.8,133.17) -- (262.48,185.38) ;
						
						\draw (221,64.6) node [anchor=north west][inner sep=0.75pt]  [font=\tiny]  {$W$};
						\draw (260,84.33) node [anchor=north west][inner sep=0.75pt]  [font=\tiny,color={rgb, 255:red, 240; green, 21; blue, 21 }  ,opacity=1 ]  {$T_{1}$};
						\draw (309.09,128.94) node [anchor=north west][inner sep=0.75pt]  [font=\tiny]  {$d+1$};
						\draw (258,102.33) node [anchor=north west][inner sep=0.75pt]  [font=\tiny,color={rgb, 255:red, 240; green, 21; blue, 21 }  ,opacity=1 ]  {$T_{2}$};
						\draw (307.48,148.78) node [anchor=north west][inner sep=0.75pt]  [font=\tiny]  {$2d$};
						\draw (188,73.6) node [anchor=north west][inner sep=0.75pt]  [font=\tiny]  {$W-1$};
						\draw (298.48,164.78) node [anchor=north west][inner sep=0.75pt]  [font=\tiny]  {$2d+1$};
						\draw (173,88.6) node [anchor=north west][inner sep=0.75pt]  [font=\tiny]  {$W-d$};
						\draw (256,123.33) node [anchor=north west][inner sep=0.75pt]  [font=\tiny,color={rgb, 255:red, 240; green, 21; blue, 21 }  ,opacity=1 ]  {$T_{3}$};
						\draw (221,138.33) node [anchor=north west][inner sep=0.75pt]    {$\cdots \cdots $};
						\draw (252,166.33) node [anchor=north west][inner sep=0.75pt]  [font=\tiny,color={rgb, 255:red, 240; green, 21; blue, 21 }  ,opacity=1 ]  {$T_{n}$};
					\end{tikzpicture}
					\caption{The $(d+2)$-angulation~$T=\oplus_{i=1}^{n}T_i$.}
					\label{F-1}
				\end{center}
			\end{figure}
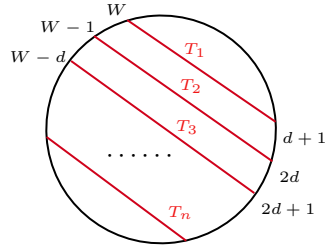

			$\mathbf{(Step~2)}$ Observe that
			$$\Sigma T_1=(W-1,d)=(W-1,W+d),$$
			$$T_2=(2d,W-1)=(W-1,W+2d).$$
			Set $X=(W-1+d,W+2d)$. Then there exists an Auslander-Reiten triangle (see Figure \ref{AR-3}) as follows:
			$$\Sigma T_1\longrightarrow T_2\longrightarrow X\longrightarrow \Sigma^{2}T_1$$		
On the other hand, we have 
			\begin{align*}
				\Sigma^{d-1}X&=(W+2d-d+1,W-1+d-d+1)\\
				&=(d+1,W)=T_1.
			\end{align*}
			Then there exists a $(d+3)$-angle starting at $T_1$ and ending at $T_1$:
			\begin{equation*}
				\begin{tikzcd}[column sep =10.5, row sep =10.5]
					&0 \ar[dr]  \ar[rr]&& T_2 \ar[dr] \ar[r] & \cdots  &  \cdots &  \cdots\ar[r]&0 \ar[dr]\ar[rr]  & & 0   \ar[dr]&\\
					T_1\ar[ur] &&\Sigma T_1 \ar[ll,rightarrow]\ar[ur]&   &\Sigma^{1-d} T_1  \ar[ll,rightarrow] \ar[r]& \cdots\ar[r] &\Sigma^{-2}T_1   \ar[ur]& &\Sigma^{-1}T_1\ar[ur]\ar[ll,rightarrow]	&   &T_1.\ar[ll,rightarrow]
				\end{tikzcd}
			\end{equation*}
			This shows that $[T_1]+[T_2]+(-1)^{d+2}[T_1]=0$ in  $K_{0}^{\rm in}(\add T)$.
			
			$\mathbf{(Step~3)}$  Assume that $2\leq i\leq n-1$. Consider the following two cases:
			
			$\mathbf{(Case~1})$ $i$ is even. Observe that
			$$T_{i-1}=(\dfrac{i}{2}d+1,W-\dfrac{i-2}{2}d),$$
			$$T_{i+1}=(\dfrac{i+2}{2}d+1,W-\dfrac{i}{2}d),$$
			$$\Sigma^{-1} T_i=(W-1-\dfrac{i-2}{2}d+1,\dfrac{i+2}{2}d+1)=(W-\dfrac{i-2}{2}d,\dfrac{i+2}{2}d+1),$$
			$$\Sigma^{d-1} T_i=(W-1-\dfrac{i-2}{2}d-d+1,\dfrac{i+2}{2}d-d+1)=(W-\dfrac{i}{2}d,\dfrac{i}{2}d+1).$$
			Then there exists an Auslander-Reiten triangle (see Figure \ref{AR-4}) as follows:
			$$\Sigma^{d-1} T_i\longrightarrow T_{i-1}\oplus T_{i+1}\longrightarrow \Sigma^{-1}T_i\longrightarrow\Sigma^{d} T_i.$$
			The observation above implies that there is a $(d+3)$-angle starting at $T_i$ and ending at $T_i$:
			\begin{equation*}
				\begin{tikzcd}[column sep =10.5, row sep =10.5]
					&0 \ar[dr]  \ar[rr]&& 0 \ar[dr] \ar[r] & \cdots  &  \cdots &  \cdots\ar[r]&T_{i-1}\oplus T_{i+1}  \ar[dr]\ar[rr]  & & 0 \ar[dr]&\\
					T_i\ar[ur] &&\Sigma T_i \ar[ll,rightarrow]\ar[ur]&   &\Sigma^{2} T_i  \ar[ll,rightarrow] \ar[r]& \cdots\ar[r] &\Sigma^{d-1}T_i   \ar[ur]& &\Sigma^{-1}T_i\ar[ur]\ar[ll,rightarrow]	&   &T_i.\ar[ll,rightarrow]
				\end{tikzcd}
			\end{equation*}
			This shows that $[T_i]+(-1)^{d}[T_{i-1}\oplus T_{i+1}]+(-1)^{d+2}[T_i]=0$ in  $K_{0}^{\rm in}(\add T)$.
			
	\begin{figure}
	\centering
	\begin{tikzpicture}[scale=1.5][H]
		\draw[red](1,1.6)--(0,0);
		\draw (-0.2,-0.2)  node{$\scriptstyle (1,d+2)$};
		\draw[very thick](0,1.6)--(5,1.6);
		\draw (0,0)  node{$\scriptstyle \bullet $ };
		\draw (1,1.6)  node{$\scriptstyle \bullet$};
		\draw (0.9,1.75)  node{$\scriptstyle (1,nd+2)$};
		\draw(1.5,1.6)--(0.5,0);
		\draw (0.5,0)  node{$\scriptstyle \bullet $ };
		\draw (1.5,1.6)  node{$\scriptstyle \bullet$};
		\draw (1.5,1.75)  node{$\scriptstyle T_1 $ };
		\draw (0.7,-0.2)  node{$\scriptstyle (1+d,2d+2) $ };
		\draw(1.5,1.6)--(2.5,0);
		\draw (2.5,0)  node{$\scriptstyle \bullet$};
		\draw (2.3,-0.2)  node{$\scriptstyle (nd+1,W) $ };
		\draw(2,1.6)--(3,0);
		\draw (2,1.6)  node{$\scriptstyle \bullet$};
		\draw (3,0)  node{$\scriptstyle \bullet$};
		\draw(3,0)--(4,1.6);
		\draw(2.5,1.6)--(3.5,0);
		\draw (3.25,0.4)  node{$\scriptstyle \bullet$};
		\draw (3.45,0.4)  node{$\scriptstyle T_2 $ };
		\draw (3,-0.2)  node{$\scriptstyle \Sigma T_1 $ };
		\draw (3.5,-0.2)  node{$\scriptstyle X $ };
		\draw (3.5,0)  node{$\scriptstyle  \bullet$ };
		\draw[very thick](0,0)--(5,0);
	\end{tikzpicture}
	\caption{The Auslander-Reiten triangle $\Sigma T_1\longrightarrow T_2\longrightarrow X\longrightarrow \Sigma^{2} T_1$.}
	\label{AR-3}
\end{figure}

			\begin{figure}
				\centering
				\begin{tikzpicture}[scale=1.5][H]
					\draw[very thick](0,1.6)--(4,1.6);
					\draw (1.5,1.6)  node{$\scriptstyle \bullet$};
					\draw (1.5,1.75)  node{$\scriptstyle T_1=(1+d,W) $ };
					
					\draw[dashed](1.5,1.6)--(1.5,0);
					\draw(0.8,1.6)--(1.8,0);
					\draw(1.1,1.6)--(2.1,0);
					\draw (1.5,0.97)  node{$\scriptstyle \bullet$};
					\draw (1.55,1.2)  node{$\scriptstyle T_{i-1}$ };
					\draw (1.5,0.47)  node{$\scriptstyle \bullet$};
					\draw (1.6,0.22)  node{$\scriptstyle T_{i+1}$ };
					\draw(1.9,1.6)--(0.9,0);
					\draw(2.2,1.6)--(1.2,0);
					\draw (1.35,0.7)  node{$\scriptstyle \bullet$};
					\draw (0.95,0.7)  node{$\scriptstyle \Sigma^{d-1}T_i$ };
					\draw (1.65,0.7)  node{$\scriptstyle \bullet$};
					\draw (2.05,0.7)  node{$\scriptstyle \Sigma^{-1}T_i$ };
					\draw[dashed](2.8,1.6)--(2.8,0);
					\draw (2.8,0.2)  node{$\scriptstyle \bullet$};
					\draw (3,0.2)	node{$\scriptstyle T_2$ };
					\draw (2.8,0.9)  node{$\scriptstyle \bullet$};
					\draw[very thick](0,0)--(4,0);
					\draw (3,0.9)	node{$\scriptstyle T_i$ };
					~~~
				\end{tikzpicture}
				\caption{The Auslander-Reiten triangle $\Sigma^{d-1} T_i\longrightarrow T_{i-1}\oplus T_{i+1}\longrightarrow \Sigma^{-1}T_i\longrightarrow\Sigma^{d} T_i$.}
				\label{AR-4}
			\end{figure}
			\begin{figure}
				\centering
				\begin{tikzpicture}[scale=1.5][H]
					\draw[very thick](0,1.6)--(4,1.6);
					\draw (1.2,1.6)  node{$\scriptstyle \bullet$};
					\draw (1.2,0.8)  node{$\scriptstyle \bullet$};
					\draw (1,0.8)  node{$\scriptstyle T_{i}$ };
					\draw[dashed](1.2,1.6)--(1.2,0);
					\draw(2.36,1.6)--(3.36,0);
					\draw(2.1,1.6)--(3.1,0);
					\draw(3.23,1.6)--(2.23,0);
					\draw(3.48,1.6)--(2.48,0);
					\draw[dashed](2.8,1.6)--(2.8,0);
					\draw (2.8,0.5)  node{$\scriptstyle \bullet$};
					\draw (2.83,0.2)  node{$\scriptstyle T_{i+1}$ };
					\draw (2.8,0.9)  node{$\scriptstyle \bullet$};
					\draw[very thick](0,0)--(4,0);
					\draw (2.8,1.2)	node{$\scriptstyle T_{i-1}$ };
					\draw (1.2,1.75)  node{$\scriptstyle T_1=(1+d,W) $ };
					\draw (2.92,0.7)  node{$\scriptstyle \bullet$};
					\draw (3.4,0.7)  node{$\scriptstyle \Sigma^{1-d}T_i$ };
					\draw (2.67,0.7)  node{$\scriptstyle \bullet$};
					\draw (2.37,0.7)  node{$\scriptstyle \Sigma T_i$ };
				\end{tikzpicture}
				\caption{The Auslander-Reiten triangle $\Sigma T_i\longrightarrow T_{i-1}\oplus T_{i+1}\longrightarrow \Sigma^{1-d} T_i\longrightarrow \Sigma^{2}T_i$.}
				\label{AR-5}
			\end{figure}
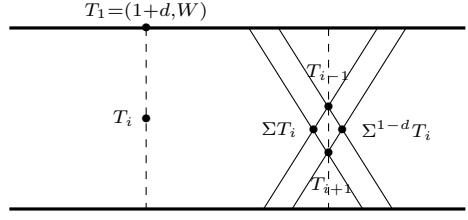

			$\mathbf{(Case~2})$ $i$ is odd. This case is similar to that of the first one. Observe that
			$$T_{i-1}=(\dfrac{i+1}{2}d,W-1-\dfrac{i-3}{2}d),$$
			$$T_{i+1}=(\dfrac{i+3}{2}d,W-1-\dfrac{i-1}{2}d),$$
			$$\Sigma T_i=(W-1-\dfrac{i-1}{2}d,\dfrac{i+1}{2}d),$$
			$$\Sigma^{1-d} T_i=(W-\dfrac{i-1}{2}d+d-1,\dfrac{i+1}{2}d+1+d-1)=(W-1+\dfrac{3-i}{2}d,\dfrac{i+3}{2}d).$$
			Then there exist an Auslander-Reiten triangle (see Figure \ref{AR-5}) as follows:
			$$\Sigma T_i\longrightarrow T_{i-1}\oplus T_{i+1}\longrightarrow \Sigma^{1-d} T_i\longrightarrow \Sigma^{2}T_i.$$
			The observation above implies that there exists a $(d+3)$-angle starting at $T_i$ and ending at $T_i$:
			\begin{equation*}
				\begin{tikzcd}[column sep =10.5, row sep =10.5]
					&0 \ar[dr]  \ar[rr]&& T_{i-1}\oplus T_{i+1} \ar[dr] \ar[r] & \cdots  &  \cdots &  \cdots\ar[r]&0 \ar[dr]\ar[rr]  & & 0 \ar[dr]&\\
					T_i\ar[ur] &&\Sigma T_i \ar[ll,rightarrow]\ar[ur]&   &\Sigma^{1-d} T_i  \ar[ll,rightarrow] \ar[r]& \cdots\ar[r] &\Sigma^{-2}T_i   \ar[ur]& &\Sigma^{-1}T_i\ar[ur]\ar[ll,rightarrow]	&   &T_i.\ar[ll,rightarrow]
				\end{tikzcd}
			\end{equation*}
			This shows that $[T_i]+[T_{i-1}\oplus T_{i+1}]+(-1)^{d+2}[T_i]=0$ in  $K_{0}^{\rm in}(\add T)$.
			
			$\mathbf{(Step~4)}$  It remains to consider $T_n$. We consider two cases:
			
			$\mathbf{(Case~1})$ $n$ is even. Then
			$$\Sigma^{-1} T_n=(W-\dfrac{n-2}{2}d,\dfrac{n+2}{2}d+1).$$
			$$T_{n-1}=(\dfrac{n}{2}d+1,W-\dfrac{n-2}{2}d),$$
			$$\Sigma^{d-1} T_n=(W-\dfrac{n}{2}d,\dfrac{n}{2}d+1).$$
			It is shown in Figure \ref{AR-6} that there is  an Auslander-Reiten triangle 
			$$\Sigma^{d-1} T_n\longrightarrow T_{n-1}\longrightarrow  \Sigma^{-1}T_n \longrightarrow \Sigma^{d} T_n.$$
			Then there is a $(d+3)$-angle starting at $T_n$ and ending at $T_n$:
			\begin{equation*}
				\begin{tikzcd}[column sep =10.5, row sep =10.5]
					&0 \ar[dr]  \ar[rr]&& 0 \ar[dr] \ar[r] & \cdots  &  \cdots &  \cdots\ar[r]&T_{n-1} \ar[dr]\ar[rr]  & & 0 \ar[dr]&\\
					T_n\ar[ur] &&\Sigma T_n \ar[ll,rightarrow]\ar[ur]&   &\Sigma^{2} T_n  \ar[ll,rightarrow] \ar[r]& \cdots\ar[r] &\Sigma^{d-1}T_n  \ar[ur]& &\Sigma^{-1}T_n\ar[ur]\ar[ll,rightarrow]	&   &T_n.\ar[ll,rightarrow]
				\end{tikzcd}
			\end{equation*}
			Thus $[T_n]+(-1)^{d}[T_{n-1}]+(-1)^{d+2}[T_{n}]=0$ in  $K_{0}^{\rm sp}(\add T)/\im\Psi$.
			
			$\mathbf{(Case~2})$ $n$ is odd.   Observe that
			$$\Sigma T_n=(W-1-\dfrac{n-1}{2}d,\dfrac{n+1}{2}d).$$
			$$T_{n-1}=(\dfrac{n+1}{2}d,W-1-\dfrac{n-3}{2}d),$$
			$$\Sigma^{1-d} T_n=(W-1-\dfrac{n-3}{2}d,\dfrac{n+3}{2}d).$$
			It is shown in Figure \ref{AR-7} that  there exist an Auslander-Reiten triangle 
			$$\Sigma T_n\longrightarrow T_{n-1}\longrightarrow \Sigma^{1-d} T_n\longrightarrow \Sigma^{2}T_n.$$
			We obtain  a $(d+3)$-angle starting at $T_n$ and ending at $T_n$:
			\begin{equation*}
				\begin{tikzcd}[column sep =10.5, row sep =10.5]
					&0 \ar[dr]  \ar[rr]&& T_{n-1} \ar[dr] \ar[r] & \cdots  &  \cdots &  \cdots\ar[r]&0 \ar[dr]\ar[rr]  & & 0 \ar[dr]&\\
					T_n\ar[ur] &&\Sigma T_n \ar[ll,rightarrow]\ar[ur]&   &\Sigma^{1-d} T_n  \ar[ll,rightarrow] \ar[r]& \cdots\ar[r] &\Sigma^{-2}T_n  \ar[ur]& &\Sigma^{-1}T_n\ar[ur]\ar[ll,rightarrow]	&   &T_n.\ar[ll,rightarrow]
				\end{tikzcd}
			\end{equation*}
			Thus $[T_n]+[T_{n-1}]+(-1)^{d+2}[T_n]=0$ in  $K_{0}^{\rm in}(\add T)$.
		\begin{figure}
	\centering
	\begin{tikzpicture}[scale=1.5][H]
		\draw[very thick](0,1.6)--(4.2,1.6);
		\draw (1.26,1.6)  node{$\scriptstyle \bullet$};
		\draw (2.8,1.6)  node{$\scriptstyle \bullet$};
		\draw(2.8,1.6)--(1.8,0);	
		\draw(0.8,1.6)--(1.8,0);
		\draw(0.5,1.6)--(1.5,0);
		\draw(1.02,0)--(2.02,1.6);
		\draw (2.8,1.75) node{$\scriptstyle T_n$ };
		
		\draw[dashed](1.26,1.6)--(1.26,0);
		\draw[dashed](2.8,1.6)--(2.8,0);
		\draw[very thick](0,0)--(4.2,0);
		\draw (1.25,1.75)  node{$\scriptstyle T_1 $ };
		\draw (1.27,0.38)  node{$\scriptstyle \bullet$};
		\draw (1.5,0)  node{$\scriptstyle \bullet$};
		\draw (1.6,-0.2)  node{$\scriptstyle \Sigma^{-1} T_n $ };
		\draw (1,0.38)  node{$\scriptstyle  T_{n-1} $ };
		\draw (1.04,0)  node{$\scriptstyle   \bullet$};
		\draw (0.9,-0.2)  node{$\scriptstyle   	\Sigma^{d-1} T_n$};
		
	\end{tikzpicture}
	\caption{The Auslander-Reiten triangle 
		$\Sigma^{d-1} T_n\longrightarrow T_{n-1}\longrightarrow  \Sigma^{-1}T_n \longrightarrow \Sigma^{d} T_n$.}
	\label{AR-6}
\end{figure}

			\begin{figure}
				\centering
				\begin{tikzpicture}[scale=1.5][H]
					\draw[very thick](0,1.6)--(4.2,1.6);
					\draw (1.26,1.6)  node{$\scriptstyle \bullet$};
					\draw (2.51,1.6)  node{$\scriptstyle \bullet$};
					\draw (2.51,1.75)  node{$\scriptstyle \Sigma T_n$};
					\draw(3.26,0)--(2.26,1.6);	
					\draw(2.5,1.6)--(3.5,0);
					\draw(3.1,1.6)--(2.1,0);
					\draw(1.26,0)--(2.26,1.6);
					\draw (2.8,1.13) node{$\scriptstyle \bullet$ };
					\draw (3.1,1.1) node{$\scriptstyle T_{n-1}$ };
					
					\draw[dashed](1.26,1.6)--(1.26,0);
					\draw[dashed](2.8,1.6)--(2.8,0);
					\draw[very thick](0,0)--(4.2,0);
					\draw (1.25,1.75)  node{$\scriptstyle T_1 $ };
					\draw (1.26,0)  node{$\scriptstyle   \bullet$};
					\draw (1.26,-0.2)  node{$\scriptstyle   T_n$};
					\draw (3.2,1.75)  node{$\scriptstyle   \Sigma^{1-d} T_n $};	
					\draw (3.1,1.6)  node{$\scriptstyle   \bullet $};	
				\end{tikzpicture}
				\caption{The Auslander-Reiten triangle 
					$\Sigma T_n\longrightarrow T_{n-1}\longrightarrow \Sigma^{1-d} T_n\longrightarrow \Sigma^{2}T_n.$}
				\label{AR-7}
			\end{figure}
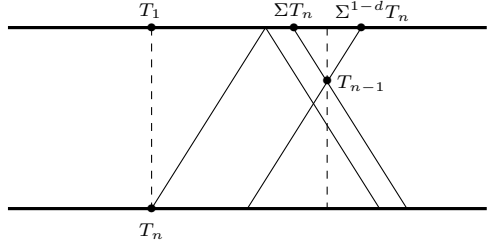

			$\mathbf{(Step~4)}$ By (Step 1)--(Step 4), we conclude that
			
			$\bullet$ If $d$ is even, then 
			$$\left\{
			\begin{aligned}
				[T_2]&=-2[T_1],   \\
				[T_3]&=-2[T_2]-[T_1]=3[T_1], \\
				[T_4]&=-2[T_3]-[T_2]=-4[T_1], \\
				&\cdots\cdots\\
				[T_{n-1}]&=-2[T_{n-2}]-[T_{n-3}]=(-1)^{n}(n-1)[T_1], \\
				[T_{n}]&=-2[T_{n-1}]-[T_{n-2}]=(-1)^{n+1}n[T_1], \\
				[T_{n-1}]&=-2[T_n]. \\
			\end{aligned}
			\right.
			$$
			The last equality above implies that
			$$(-1)^{n}(n-1)[T_1]=-2(-1)^{n+1}n[T_1]$$
			and hence $(n+1)[T_1]=0$ in $K_{0}^{\rm in}(\add T)$.
			
			$\bullet$ If $d$ is odd, then $[T_2]=0$ and
			$$[T_{i-1}]+[T_{i+1}]=0$$
		for any $2\leq i\leq n-1$. If $n$ is even, then $[T_{n-1}]=0$ and hence 
			$$[T_1]=[T_2]=\cdots=[T_{n}]=0.$$
			If $n$ is odd, then
			$$[T_2]=[T_4]=[T_{n-1}]=0,$$
			$$[T_1]=-[T_3]=\dots=(-1)^{n+1}[T_{n}].$$
	By using Theorem \ref{main3}(3), we have
	$$K_0(\mathcal{C}_{A_{n}}^{d})\cong\left\{
	\begin{aligned}
		\mathbb{Z}/(n+1)\mathbb{Z}, &~~\text{if $d$ is even,}\\
		\mathbb{Z}, &~~ \text{if $d$ and $n$ are odd,}\\
		0, & ~~\text{if $d$ is odd and $n$ is even}.
	\end{aligned}
	\right.
	$$
	\end{proof}
		
		\begin{remark}
		We note that the case of $d$ is odd was also calculated in \cite[Proposition 7.21]{FE}.
		\end{remark}	

		We finish this section with an example illustrating  Theorem \ref{main5}.
		\begin{example} Let  $\mathcal{C}_{A_3}^{2}$ be the $2$-cluster titling category of type $A_3$.	The Auslander-Reiten quiver is shown in Example \ref{E-1}(3). Take $T_1=(3,10),~T_2=(9,4)$ and $T_3=(5,8)$. Then $T=\oplus_{i=1}^{3}T_i$ is a $3$-cluster tilting object in $\mathcal{C}_{A_3}^{2}$. By Theorem \ref{main5}, we infer that
			$$[T_2]=-2[T_1],~[T_3]=3[T_1],~\text{and}~4[T_1]=0$$in $K_{0}^{\rm in}(\add T)$ and hence $K_0(\mathcal{C}_{A_{3}}^{2})\cong \mathbb{Z}/4\mathbb{Z}$.
		\end{example}	
	\vspace{4mm}

	\end{document}